\newtheorem{theorem}{Theorem}[section]
\newtheorem{lemma}{Lemma}[section]
\newtheorem{proposition}{Proposition}[section]
\newtheorem{corollary}{Corollary}[section]
\newtheorem{definition}{Definition}[section]
\newtheorem{remark}{Remark}[section]
\newtheorem{example}{Example}[section]
\begin{document}
\title{$\Sigma$-semi-compact rings and modules}
\author{M. Behboodi}\footnote{The research of the first author was in part supported by a grant from IPM (No. 92130413)}
\address{Department of Mathematical
 Sciences, Isfahan
University of Technology, P.O.Box: 84156-83111, Isfahan, Iran, and
        School of Mathematics
 Institute for Research in Fundamental Sciences (IPM)
 P. O. Box 19395-5746, Tehran, Iran
}
\email{mbehbood@cc.iut.ac.ir} 

\author{F. Couchot}
\address{Universit\'e de Caen Basse-Normandie, CNRS UMR
  6139 LMNO,
F-14032 Caen, France}
\email{francois.couchot@unicaen.fr}  

\author{S. H. Shojaee}
\address{Mazandaran University of Science and Technology, P.O.Box: 48158-78413, Behshahr, Iran.
}
\email{hshojaee@math.iut.ac.ir}

\keywords{semi-compact; pure injective; arithmetical ring;
$\Sigma$-semi-compact; flat module; 
  finitely  projective; singly projective .}

\subjclass[2010]{16D40, 16D50, 16D80, 16L30, 16L60.}

\begin{abstract} In this paper several characterizations
  of semi-compact modules are given. Among other
results, we study rings whose semi-compact modules are injective.
We introduce the property $\Sigma$-semi-compact for modules
and we characterize  the modules satisfying this property. In particular, we show that a
ring $R$ is left $\Sigma$-semi-compact if and only if $R$  satisfies
the ascending (resp. descending)  chain condition on the left
(resp. right) annulets. Moreover, we prove that every flat left
$R$-module is semi-compact if and only if  $R$ is left
$\Sigma$-semi-compact. We also show that a ring $R$
 is left Noetherian if and only if every pure projective left $R$-module
  is semi-compact. Finally, we consider rings whose
flat modules are finitely (singly) projective. For any commutative arithmetical ring $R$ with quotient ring $Q$, we prove that every flat $R$-module
is semi-compact
if and only if every flat $R$-module is finitely (singly) projective if
 and only if $Q$ is pure semisimple. A similar result is obtained for reduced commutative rings $R$ with the space $\mathrm{Min}\ R$ compact. We also prove that 
every $(\aleph_{0},1)$-flat left $R$-module is singly projective if $R$ is left
  $\Sigma$-semi-compact, and the converse holds if $R^{\mathbb{N}}$ is an $(\aleph_{0},1)$-flat left $R$-module.
\end{abstract}

\maketitle

\section*{Introduction}

We shall assume that all rings are associative with identity and
all modules are unitary. Let $R$ be a ring and $M$
be a left $R$-module.  For any left ideal $I$ of $R$, set $M[I] =
\{m\in M~|~Im = 0\}$. It is a subgroup of $M$. As in \cite{Mat59} and
\cite{Mat85}, $M$ is said to be {\it semi-compact} if every
finitely solvable set of congruences $x \equiv x_{\alpha}$ (mod
$M[I_{\alpha}]$)
 ($\alpha \in \Lambda$,  $x_{\alpha}\in M$ and  $I_{\alpha}$ is
 a left ideal of $R$ for each $\alpha \in \Lambda$) has a simultaneous
solution in $M$. Also, we say that $M$ is {\it
$\Sigma$-semi-compact} if all direct sums of copies of $M$
are semi-compact. Semi-compactness was introduced by Matlis in \cite{Mat59} (and
also in\cite{Mat85}) for modules over commutative  rings.

In the present article, we shall study semi-compact and
$\Sigma$-semi-compact mo\-dules over arbitrary rings (not
necessarily commutative). In Section~\ref{S:semicomp},  we consider some basic
properties of  semi-compact modules, their relationship to other
concepts such as injectivity and pure-injectivity, and some rings
characterized by semi-compactness. Several characterizations of
semi-compact modules are given in  Propo\-sition \ref{P:first character} and Theorem \ref{T:character}. For instance, it is shown that a left $R$-module $M$ is semi-compact if and only if every finite solvable system of equations of the form $r_jx=a_j\in M, r_j\in R$ has a global solution in $M$, if and only if, every pure extension of $M$ is cyclically pure. So, it follows that the semi-compact modules are exactly the singly pure-injective modules introduced by Azumaya in \cite{Azu87}. 
    It is easy to see that  every semi-compact
left $R$-module is
      injective if and only if   $R$ is a von-Neumann regular
      ring (see Theorem \ref{T:Von}).  In Section~\ref{S:flat}, we introduce and study $\Sigma$-semi-compact
             modules.   It is shown that a left $R$-module $M$
  is $\Sigma$-semi-compact if and only if  $M^{(\mathbb{N})}$ is semi-compact
  if and only if  $M$ satisfies the descending
chain condition  (d.c.c.) on the  subgroups of $M$ which are
   annihilators of (finitely generated) left ideals of $R$
   (see Proposition \ref{P:faith proposition} and  Theorem \ref{T:sigma semicompact}).
It is  also shown that  every pure projective left $R$-module is
     semi-compact if and only if $R$ is left Noetherian
  (see Theorem \ref{T:Noeth-pproj}).
 In Theorem \ref{T:flat is  semicompact},
we show that  a ring $R$ is left $\Sigma$-semi-compact if and only if
every flat left $R$-module is semi-compact. If $R$
is a commutative ring, we prove that each flat $R$-module is semi-compact
if the quotient ring $Q$ is Noetherian (see Proposition \ref{P:QNoe}). 

In Section~\ref{S:fproj}, for a ring $R$ we compare the following conditions:
\begin{itemize}
\item each flat left $R$-module is semi-compact;
\item each flat left $R$-module is finitely projective;
\item each flat left $R$-module is singly projective.
\end{itemize}

There are many examples of rings for which these conditions are equivalent. For instance, if $R$ is a commutative ring and $Q$ its quotient ring, and if $R$ is either arithmetical or reduced with $\mathrm{Min}\ R$ compact, it is proven that 
these conditions are satisfied if and only if $Q$ is pure-semisimple\footnote{each commutative reduced pure-semisimple ring is semisimple}. But, if $R$ is a self left FP-injective ring, the two first conditions are not equivalent: the first holds if and only if $R$ is quasi-Frobenius, and the second if and only if $R$ is left perfect. We give an example of a self FP-injective commutative perfect ring which is not quasi-Frobenius.

It is also shown that each ($\aleph_0,1$)-flat left $R$-module is singly projective if $R$ is $\Sigma$-semi-compact as left module
  (see Proposition \ref{P:(N_0,1)-flat}) and the converse holds if $R^{\mathbb{N}}$ is a ($\aleph_0,1$)-flat left $R$-module.  

In Section~\ref{S:pure-injectivity} we investigate the rings $R$ for which each semi-compact left $R$-module is pure-injective. We get only some partial results. However, if $R$ is a reduced commutative ring, then $R$ satifies this condition if and only if $R$ is von Neumann regular.

\bigskip  

\section{Semi-compact modules}\label{S:semicomp}

\begin{definition} 
 \textnormal{ Let $M$ be a left $R$-module. For any subset (subgroup)
$X$ of $M$, 
$^{\bot}X = \{r\in R~|~rX = 0\}$
 is a left ideal of $R$. The set of such left ideals will be denoted by
$\mathcal{A}_l(R, M)$. For any subset (left ideal) $X$ of $R$, $M[X] = \{m\in M~|~Xm = 0\}$
is a subgroup of $M$. The set of such subgroups  will be denoted by
$\mathcal{A}_r(R, M)$.
Since $X\mapsto {}^{\bot}X$ is an order antiisomorphism between $\mathcal{A}_r(R, M)$ and $\mathcal{A}_l(R, M)$,
then one satisfies the a.c.c. if and only if the other satisfies the d.c.c.
In the special case $M=R$,  the elements of $\mathcal{A}_r(R, R)$ (respectively
$\mathcal{A}_l(R, R)$) are called right (respectively left) annulets of $R$: in this case we denote $X^{\bot}$ the right annulet of $X$. As in \cite{Mat59} and \cite{Mat85} $M$ is said
to be {\it semi-compact} if every finitely solvable set of congruences $x \equiv x_{\alpha}$ (mod $M[I_{\alpha}]$)
 (where $\alpha\in \Lambda$,  $x_{\alpha}\in M$ and  $I_{\alpha}$ is a left ideal of $R$ for each $\alpha \in \Lambda$) has a simultaneous solution in $M$.}
\end{definition}

\begin{lemma} \label{L:M[I][J]}
Let $M$ be a left $R$-module and $I$ and $J$ left ideals of $R$. Then \[(M[I])[J]=(M[J])[I]=M[I+J]=M[I]\cap M[J].\]
\end{lemma}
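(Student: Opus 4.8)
The plan is to show that all four subgroups named in the statement coincide with the single subgroup
$S=\{m\in M\mid Im=0\text{ and }Jm=0\}$, by unwinding the definition $M[X]=\{m\in M\mid Xm=0\}$. The only preliminary observation I would record is that for a left ideal $X$ of $R$ and a fixed $m\in M$, the subset $Xm=\{xm\mid x\in X\}$ is already an additive subgroup of $M$ (since $X$ is an additive subgroup of $R$ and $x_1m-x_2m=(x_1-x_2)m$), so the condition $Xm=0$ simply means $xm=0$ for every $x\in X$.

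With that in hand, first I would check $M[I+J]=M[I]\cap M[J]$. Since $I\subseteq I+J$ and $J\subseteq I+J$, any $m$ killed by $I+J$ is killed by each of $I$ and $J$, giving $M[I+J]\subseteq M[I]\cap M[J]$. Conversely, if $Im=0$ and $Jm=0$, then for $a\in I$ and $b\in J$ we have $(a+b)m=am+bm=0$; as every element of $I+J$ is of the form $a+b$, this yields $(I+J)m=0$, i.e.\ $M[I]\cap M[J]\subseteq M[I+J]$. Thus $M[I+J]=M[I]\cap M[J]=S$. Next, reading $(M[I])[J]$ as $\{m\in M[I]\mid Jm=0\}$, this is by definition $\{m\in M\mid Im=0\text{ and }Jm=0\}=S$; since this description is visibly symmetric in $I$ and $J$, the same argument (or simply swapping the roles of $I$ and $J$) gives $(M[J])[I]=S$ as well. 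Chaining these equalities proves the lemma.

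I do not expect a genuine obstacle here; the statement is a direct computation. The only two points that deserve an explicit line are: (i) the identity $(I+J)m=Im+Jm$ used above, which is immediate because $I$ and $J$ are additive subgroups of $R$; and (ii) the remark that $M[I]$ is in general only a subgroup of $M$, not an $R$-submodule (it is a submodule precisely when $I$ is absorbed on the right), so $(M[I])[J]$ must be understood as the set of elements of $M[I]$ annihilated by $J$ under the ambient action, not via a module structure on $M[I]$. Neither point causes any trouble, so the proof is short.
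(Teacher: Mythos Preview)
Your proof is correct; it is a straightforward unwinding of the definition, and your cautionary remark (ii) about $(M[I])[J]$ being read via the ambient $R$-action on $M$ (since $M[I]$ need not be an $R$-submodule when $I$ is merely a left ideal) is the only point worth spelling out. The paper itself states this lemma without proof, treating it as an immediate consequence of the definitions, so your argument simply fills in what the authors regarded as routine.
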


\begin{proposition} \label{P:submodule}
Let $R$ be a  ring and  $M$  a semi-compact left $R$-module. Then $M[I]$ and $M/M[I]$ are semi-compact for each  two-sided ideal $I$ of $R$.
\end{proposition}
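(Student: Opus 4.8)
The plan is to deduce semi-compactness of $M[I]$ and of $M/M[I]$ from that of $M$ by describing, for each left ideal $J$ of $R$, the annihilator subgroup $N[J]$ (where $N$ is $M[I]$ or $M/M[I]$) in terms of subgroups of the form $M[K]$ of $M$ itself, and then transporting finitely solvable systems of congruences back and forth. First I would record that, because $I$ is two-sided, $M[I]$ is in fact a submodule of $M$ (from $Im=0$ one gets $I(rm)=(Ir)m\subseteq Im=0$), so that $M/M[I]$ makes sense as a left $R$-module.

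For $M[I]$, I would use Lemma~\ref{L:M[I][J]} to see that $(M[I])[J]=M[I+J]\subseteq M[I]$ for every left ideal $J$ of $R$, with $I+J$ again a left ideal. Given a finitely solvable family $x\equiv x_\alpha\ \pmod{(M[I])[J_\alpha]}$ with all $x_\alpha\in M[I]$, I would reinterpret it as the family $x\equiv x_\alpha\ \pmod{M[I+J_\alpha]}$ over $M$; it is still finitely solvable there, since a solution over $M[I]$ of a finite subsystem is a solution over $M$. Semi-compactness of $M$ then supplies a global solution $z\in M$, and the point is that $z$ automatically lies in $M[I]$: if $\Lambda\neq\emptyset$, fixing one index $\alpha_0$ gives $z=x_{\alpha_0}+(z-x_{\alpha_0})\in M[I]+M[I+J_{\alpha_0}]=M[I]$ (and $\Lambda=\emptyset$ is trivial, taking $z=0$). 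Since $z-x_\alpha\in M[I+J_\alpha]=(M[I])[J_\alpha]$ for all $\alpha$, $z$ is the desired simultaneous solution in $M[I]$.

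For $M/M[I]$, write $N=M/M[I]$. The key computation I would carry out is that $N[J]=M[IJ]/M[I]$ for every left ideal $J$ of $R$: indeed $\overline m\in N[J]$ iff $Jm\subseteq M[I]$ iff $(IJ)m=I(Jm)=0$ iff $m\in M[IJ]$, where one uses that $IJ$ is a left ideal and that $M[I]\subseteq M[IJ]$. Given a finitely solvable family $x\equiv\overline x_\alpha\ \pmod{N[J_\alpha]}$ over $N$, I would fix lifts $x_\alpha\in M$ of the $\overline x_\alpha$ and pass to the family $x\equiv x_\alpha\ \pmod{M[IJ_\alpha]}$ over $M$, which remains finitely solvable (a solution of a finite subsystem over $N$ lifts to one over $M$, since $\overline y-\overline x_\alpha\in M[IJ_\alpha]/M[I]$ forces $y-x_\alpha\in M[IJ_\alpha]$). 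A global solution $z\in M$ of this family, obtained from semi-compactness of $M$, then reduces modulo $M[I]$ to a global solution $\overline z$ of the original family over $N$, because $z-x_\alpha\in M[IJ_\alpha]$ gives $\overline z-\overline x_\alpha\in M[IJ_\alpha]/M[I]=N[J_\alpha]$ for all $\alpha$.

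I do not anticipate a genuine obstacle here: the argument is essentially bookkeeping with Lemma~\ref{L:M[I][J]} and an easy congruence-lifting step. The two places that need a little attention are the correct identification of the subgroups $N[J]$ — which is precisely where two-sidedness of $I$ enters, guaranteeing that $M[I]$ is a submodule, that $IJ$ is a left ideal, and that $M[I]\subseteq M[IJ]$ — and, in the submodule case, the observation that a global solution produced in $M$ cannot escape $M[I]$ (since each modulus $M[I+J_\alpha]$ sits inside $M[I]$), together with the trivial degenerate case $\Lambda=\emptyset$.
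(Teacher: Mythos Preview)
Your proof is correct and follows essentially the same approach as the paper: for $M[I]$ you use Lemma~\ref{L:M[I][J]} to rewrite $(M[I])[J_\alpha]=M[I+J_\alpha]$, solve in $M$, and observe the solution lands in $M[I]$; for $M/M[I]$ you identify $N[J_\alpha]=M[IJ_\alpha]/M[I]$, lift the system to $M$, solve there, and project back. You are somewhat more explicit than the paper about the finite-solvability lifting and the degenerate case $\Lambda=\emptyset$, but the argument is the same.
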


\begin{proof}
Let  $x \equiv x_{\alpha}$ (mod $(M[I])[J_{\alpha}]=M[I+J_{\alpha}]$) be a finitely solvable set of congruences
(where $\alpha\in \Lambda$,  $x_{\alpha}\in M[I]$ and  $J_{\alpha}$ is a left  ideal of $R$ for each $\alpha \in \Lambda$).
Since $M$ is semi-compact, there exists  $m$ in $M$ such $m-x_{\alpha}\in M[I+J_{\alpha}]\subseteq M[I]$ for each $\alpha \in \Lambda$.
Since $x_{\alpha}\in M[I]$, so is $m$. Therefore, $M[I]$ is semi-compact. Now, let  $x \equiv x_{\alpha}+M[I]$ (mod $(M/M[I])[J_{\alpha}]$) be a finitely solvable set of congruences where $\alpha\in \Lambda$,  $x_{\alpha}\in M$ and  $J_{\alpha}$ is an  ideal of $R$ for each $\alpha \in \Lambda$. Obviously,
$x \equiv x_{\alpha}$ (mod $(M[IJ_{\alpha}]$) is a finitely solvable set of congruences and so has a global solution $m\in M$.
Thus $J_{\alpha}(m-x_{\alpha})\in M[I]$. Therefore, $m+M[I]-x_{\alpha}+M[I]\in (M/M[I])[J_{\alpha}]$ for each $\alpha \in \Lambda$.
\end{proof}

Let $M$ be a left $R$-module. Then the system of equations $\sum_{i\in I} r_{ij}x_i =m_j\in M$, $j\in J$ is called {\it compatible}
if, for any choice of  $s_j\in R$, $j\in J$, where only a finite
number of $s_j$ are nonzero, the relations $\sum_{j\in J}s_jr_{ij}=0$ for
each $i\in I$ imply that $\sum_{j\in J}s_jm_{j}=0$ (see \cite[Chapter 18]{Dau94} for more details about systems of equations). Throughout this
paper, all systems of equations are assumed to be compatible. The following proposition is crucial in our investigation.
\begin{proposition} \label{P:first character}
 Let $M$ be a left $R$-module. Then the following statements are equivalent: 
\begin{enumerate}
\item $M$ is semi-compact;
\item Every finitely solvable set of congruences $x\equiv x_{\alpha}$ (mod $M[I_{\alpha}]$)
(where $\alpha\in \Lambda,~x_{\alpha}\in M$ and   $I_{\alpha}$  is a finitely generated left ideal of $R$ for each $\alpha \in \Lambda$) has a simultaneous
solution in $M$;
\item Every finitely solvable  system of equations of the form
$r_jx=m_j\in M, j\in J, r_j\in R$, is solvable in $M$.
\item For each left ideal $I$ of $R$, every homomorphism $h:I\rightarrow M$, for which the restriction to any finitely  generated left subideal $I_0$ of $I$ can be extended
 to $R$,  extends itself to a homomorphism $R\rightarrow M$.
\end{enumerate}
\end{proposition}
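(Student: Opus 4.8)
The plan is to establish the cycle (1) $\Rightarrow$ (2) $\Rightarrow$ (4) $\Rightarrow$ (3) $\Rightarrow$ (1), exploiting the dictionary between congruences modulo subgroups $M[I]$ and systems of equations. The implication (1) $\Rightarrow$ (2) is immediate since the hypothesis in (2) concerns a subclass of the congruence systems allowed in the definition of semi-compactness (those with finitely generated $I_\alpha$). For (4) $\Rightarrow$ (3), given a finitely solvable compatible system $r_j x = m_j$ ($j\in J$), I would let $I$ be the left ideal generated by $\{r_j : j\in J\}$ and attempt to define $h\colon I\to M$ on a spanning set by $h(r_j)=m_j$; compatibility of the system is exactly what is needed for $h$ to be well-defined (whenever $\sum s_j r_j = 0$ we get $\sum s_j m_j = 0$), and finite solvability of the system says precisely that the restriction of $h$ to any finitely generated subideal extends to $R$ (an extension being the same data as an element $x$ of $M$ with $r_j x = m_j$ for the finitely many $j$ involved). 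Then (4) hands us an extension $R\to M$, i.e. an element of $M$ solving the whole system, giving (3). The reverse passage, from a homomorphism to a system, underlies (3) $\Rightarrow$ something, but it is cleaner to route the cycle through congruences.

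The two substantive implications are (2) $\Rightarrow$ (4) and (3) $\Rightarrow$ (1). For (3) $\Rightarrow$ (1): given a finitely solvable congruence system $x\equiv x_\alpha \pmod{M[I_\alpha]}$, I would replace each congruence by the (possibly infinite) family of scalar equations it encodes. Writing $m-x_\alpha \in M[I_\alpha]$ as $r(m-x_\alpha)=0$ for all $r\in I_\alpha$, i.e. $rm = r x_\alpha$ for all $r\in I_\alpha$, the whole system becomes a system of equations $r x = r x_\alpha$ indexed by pairs $(\alpha, r)$ with $r\in I_\alpha$; its right-hand sides $rx_\alpha$ lie in $M$. One checks that finite solvability of the congruence system implies finite solvability of this equation system (a simultaneous solution of finitely many congruences solves all the scalar equations they contain, and conversely finitely many scalar equations involve only finitely many indices $\alpha$, so a solution of those finitely many congruences suffices) and that the resulting system is compatible (this is the point where one must verify the compatibility hypothesis is automatic here — it should follow from finite solvability, since a partial solution witnesses the needed linear dependence among right-hand sides). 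Then (3) produces a global $m$ with $rm = r x_\alpha$ for all admissible $(\alpha,r)$, which says $m - x_\alpha \in M[I_\alpha]$ for every $\alpha$, the desired simultaneous solution.

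For (2) $\Rightarrow$ (4): given $I$ and $h\colon I\to M$ as in (4), set $x_\alpha = h(r_\alpha)$-type data — more precisely, for each finitely generated subideal $I_0 \subseteq I$, fix an extension of $h|_{I_0}$ to $R$, which corresponds to an element $m_{I_0}\in M$ with $r m_{I_0} = h(r)$ for all $r\in I_0$. I would then set up the congruence system $x \equiv m_{I_0} \pmod{M[I_0]}$ indexed by the finitely generated subideals $I_0$ of $I$; this is finitely solvable because, given finitely many $I_0^{(1)},\dots,I_0^{(k)}$, their sum $I_0$ is again finitely generated and $m_{I_0}$ satisfies $m_{I_0} - m_{I_0^{(i)}} \in M[I_0^{(i)}]$ since both scale correctly against $r\in I_0^{(i)}\subseteq I_0$ (using $r m_{I_0} = h(r) = r m_{I_0^{(i)}}$). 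By (2) there is $m\in M$ with $m - m_{I_0}\in M[I_0]$ for every finitely generated $I_0\subseteq I$; then $rm = r m_{\langle r\rangle} = h(r)$ for each $r\in I$, so $x\mapsto xm$ is the required extension $R\to M$ of $h$. The main obstacle I anticipate is bookkeeping around the compatibility convention: one must make sure every equation system produced en route is genuinely compatible (so that the standing assumption applies and (3) is usable), and conversely that the systems arising from congruences carry no hidden incompatibility — I expect finite solvability to cover this, but it deserves an explicit check rather than being waved through.
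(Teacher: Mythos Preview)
Your proposal is correct and matches the paper's argument in substance. The only organizational difference is that the paper runs the cycle $(1)\Rightarrow(2)\Rightarrow(3)\Rightarrow(1)$ and treats $(3)\Leftrightarrow(4)$ as an obvious side equivalence, whereas you thread $(4)$ into the cycle via $(2)\Rightarrow(4)\Rightarrow(3)$; your $(2)\Rightarrow(4)$ step is exactly the paper's $(2)\Rightarrow(3)$ argument (congruences indexed by finitely generated subideals, solved by taking the sum of finitely many such subideals), just phrased for a homomorphism $h$ rather than a system $\{r_jx=m_j\}$, and your $(3)\Rightarrow(1)$ is identical to the paper's.
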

\begin{proof}
(1)$\Rightarrow $(2) is clear.

(2)$\Rightarrow $(3). Let $r_jx=m_j\in M, j\in J, r_j\in R$, be a finitely solvable system of equations.
For each finite subset $J_{\alpha}$ of $J$, let $I_{\alpha}$ be the left ideal generated by $\{r_j\mid j\in J_{\alpha}\}$. Let
$x_{\alpha}\in M$ be a solution of  the finite system of equations  $r_jx=m_j\in M, j\in J_{\alpha}$. Obviously, the set of congruences $x \equiv x_{\alpha}$ (mod $M[I_{\alpha}]$) is finitely solvable and by hypothesis has a global solution $z\in M$. Therefore, $z$ is a solution of the above system of equations.

(3)$\Rightarrow$(1). Let $x\equiv x_{\alpha}$ (mod $M[I_{\alpha}])$  be a finitely solvable system of
congruences in $M$, where $x_{\alpha}\in M$ and  $I_{\alpha}$ is a left ideal of $R$ for each $\alpha\in \Lambda$.
 Consider the following system of equations. $$r_{\alpha j}x=m_{\alpha j},~  r_{\alpha j}\in I_{\alpha}, ~m_{\alpha j}=r_{\alpha j}x_{\alpha}\in M. \qquad (*)$$  Since  $x\equiv x_{\alpha}$ (mod $M[I_{\alpha}])$ is finitely solvable, so is the system of equations $(*)$, and by hypothesis it has a global solution $z\in M$. So, $z$ is a solution of $x\equiv x_{\alpha}$ (mod $M[I_{\alpha}])$.

(3)$\Leftrightarrow  $(4) is clear.
\end{proof}

Let $\mathcal{S}$ be a class of finitely presented left $R$-modules. We say that an exact sequence of left $R$-modules is $\mathcal{S}$-{\it pure} if each module of $\mathcal{S}$ is projective relatively to it. Each left $R$-module which is injective relatively to each $\mathcal{S}$-pure exact sequence is said to be $\mathcal{S}$-{\it pure injective}. When $\mathcal{S}$ contains all finitely presented left $R$-modules
we say respectively "pure exact sequence" and "pure-injective module". And, when $\mathcal{S}$ contains all finitely presented cyclic left $R$-modules
we say respectively "($\aleph_0,1$)-pure exact sequence" and "($\aleph_0,1$)-pure-injective module". For any class $\mathcal{S}$ of finitely presented left $R$-modules, each pure-exact sequence of left modules is $\mathcal{S}$-pure exact, whence each $\mathcal{S}$-pure injective left $R$-module is pure injective. So, we prove the following results by using Proposition~\ref{P:first character}(3) and \cite[Theorem 1.35(d)]{Fac98} for the first result.

\begin{example} \label{E:exe} Let $R$ be a ring. Then:
\begin{itemize}
\item[(i)] For each class $\mathcal{S}$ of   finitely presented left $R$-modules, every $\mathcal{S}$-pure injective module  is semi-compact. 
\item[(ii)] If $R$ is a domain, then every torsion-free (and so every flat) left $R$-module is semi-compact.
\end{itemize}
\end{example}

\begin{remark} \label{R:f-injective}  \textnormal{Let $R$ be a ring. Recall that a left $R$-module $M$ is  {\it
semi-injective} (or f-injective) if for each finitely
generated left ideal $I$, every $R$-homomorphism
$f:I\rightarrow M$ can be extended to an $R$-homomorphism from $R$
into $M$ (see \cite{Mat85}). By Proposition \ref{P:first character}, a left $R$-module $M$ is injective if and only if
$M$ is semi-injective and semi-compact. Since every direct sum of semi-injective left $R$-modules is semi-injective, Bass's theorem implies that
 every direct sum of semi-compact left $R$-modules is semi-compact if and only if $R$ is left Noetherian.}
\end{remark}

\medskip

Recall that a submodule $A$ of $B$ is {\it pure} (resp. ($\aleph_{0},1$)-{\it pure}) if and only if
the exact sequence $ 0\longrightarrow A
\hookrightarrow B \stackrel{g}\longrightarrow
B/A\longrightarrow 0$  is pure (resp. ($\aleph_{0},1$)-pure). 
In this case we say that   $B$ is a {\it pure extension } (resp.  ($\aleph_{0},1$)-{\it pure extension}) of $A$. It is known that $A$ is a ($\aleph_{0},1$)-pure submodule of $B$
 if and only if for each $n\in\mathbb{N}$, any system of equations $r_j x=
a_j\in A$ ($r_j\in R$, $1\leq j\leq n$) is   solvable in
$A$
whenever  it is solvable in $B$
(see \cite{War69}, \cite{Fac98}, \cite{Cou11}).
As in \cite{Azu87}, a left $R$-module $B$ is called  a {\it single
extension} of $M$ if the factor module $B/M$ is  cyclic, i.e.,
there is a cyclic submodule $A$ of $B$ such that
$B=A+M$. We say that $B$ is a {\it single pure extension} (resp. {\it single} ($\aleph_{0},1$)-{\it pure extension}) of $M$, if
 $B$ is a pure (resp. ($\aleph_{0},1$)-pure) extension and a single extension of $M$.

\begin{lemma}\label{L:upper} 
Let $M$ be a left $R$-module and $$r_jx=m_j, j\in J, r_j\in R, m_j\in M \qquad (*)$$ be a finitely solvable  system of equations in $M$.
 Then there exists
a singly pure extension  $B$ of $M$ such that the system of equations $(*)$ has a solution $b\in B$. 
\end{lemma}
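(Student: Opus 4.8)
The plan is to build $B$ as a pushout-type construction that freely adjoins a solution of the system $(*)$ while forcing the extension $M \hookrightarrow B$ to be pure. First I would introduce a new symbol $x$ and consider the left $R$-module $F = M \oplus R$, thinking of the second summand as $Rx$. The system $(*)$ imposes that $r_j x = m_j$ for all $j \in J$; so I would form the submodule $N$ of $F$ generated by all elements of the form $r_j x - m_j$, and set $B' = F/N$. Writing $b$ for the image of $x$ in $B'$, we have $r_j b = m_j$ in $B'$ for every $j$, and $B'$ is a single extension of (the image of) $M$ since $B' = Rb + M$. The first thing to check is that the natural map $M \to B'$ is injective; this is exactly where compatibility of $(*)$ is used, together with the fact that $(*)$ is finitely solvable — if $\sum_k s_k(r_{j_k} x - m_{j_k}) + (m,rx)$ lies in the kernel intersected with $M$, one uses a finite solution to conclude $m = 0$. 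So $M$ embeds in $B'$ and $B'/M$ is cyclic.

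The module $B'$ need not yet be a pure extension of $M$, so the second step is to enlarge it to a pure extension without destroying the property that the quotient is cyclic. Here I would invoke the standard construction of a pure-essential (or simply pure) extension: embed $M$ purely in some module, or more carefully, take a pure extension $B$ of $B'$ such that $B/M$ is still a single extension. Actually the cleaner route is: by a theorem of Azumaya (\cite{Azu87}) or by a direct transfinite argument, $B'$ has a pure extension $B$ over $M$ with $B/M$ cyclic — one takes the pure-injective hull or a pure submodule thereof. But the most self-contained approach, and the one I expect the authors use, is different: one does not pass through $B'$ at all, but instead embeds $M$ into a pure-injective module $\widehat{M}$ (for instance the pure-injective hull), notes that $(*)$ being finitely solvable in $M$ it is solvable in $\widehat{M}$ by $(\aleph_0,1)$-purity considerations, picks a solution $b \in \widehat{M}$, and sets $B = Rb + M$. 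Then $B/M$ is cyclic by construction, and one must verify that $M$ is pure in $B$.

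The main obstacle is precisely verifying that $M$ is pure in $B = Rb + M$. Since $B/M$ is cyclic, purity of $M$ in $B$ reduces — by the characterization recalled just before the lemma and by $(\aleph_0,1)$-purity arguments together with \cite[Theorem 1.35]{Fac98} or the results of \cite{War69}, \cite{Cou11} — to checking solvability in $M$ of finite systems of equations that are solvable in $B$. I would handle this by pulling such a system back along $M \hookrightarrow \widehat{M}$: any system solvable in $B$ is solvable in $\widehat{M}$; since $M$ is pure in $\widehat{M}$, a finite system with right-hand side in $M$ that is solvable in $\widehat{M}$ is solvable in $M$. The one subtlety is that a system "solvable in $B$" with a solution involving $b$ must first be seen to be solvable in $\widehat{M}$ (immediate, since $B \subseteq \widehat{M}$) and compatible, so that purity of $M$ in $\widehat{M}$ applies; this is routine. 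Assembling these observations gives $B$ as a single pure extension of $M$ in which $(*)$ has the solution $b$, completing the proof.
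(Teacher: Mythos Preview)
Your route through the pure-injective hull is correct, but you were wrong to abandon your first construction. The module $B' = (M \oplus Rx)/N$ you build is precisely the paper's $B$, and the paper shows \emph{directly} that $M$ is already pure in $B'$---no enlargement is needed. The point you missed is that finite solvability of $(*)$ is exactly what forces purity: given a finite system $\sum_k c_{lk} y_k = m'_l \in M$ solved in $B'$ by elements $(a_k, t_k x) + N$, unwrapping membership in $N$ yields identities $\sum_k c_{lk} t_k = -\sum_s z_{ls} r_{j_{ls}}$ for a finite set of indices $j_{ls}$; a common solution $m' \in M$ of this finite subsystem of $(*)$ then lets one replace each $(a_k, t_k x) + N$ by $(a_k - t_k m', 0) + N$, producing a solution in $M$. (Injectivity of $M \to B'$ uses only compatibility, not finite solvability.)

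Your alternative via $\widehat{M}$ does work: $\widehat{M}$ is algebraically compact, so the finitely solvable system $(*)$ has a global solution $b \in \widehat{M}$, and then $M$ is pure in $Rb + M$ since any finite system solvable in $Rb + M \subseteq \widehat{M}$ with constants in $M$ is solvable in $M$ by purity of $M$ in $\widehat{M}$. This is shorter but less elementary, as it imports the existence of the pure-injective hull and the equivalence of pure-injectivity with algebraic compactness. One wording fix: the reason $(*)$ is solvable in $\widehat{M}$ is algebraic compactness of $\widehat{M}$ (equivalently Example~\ref{E:exe}(i) combined with Proposition~\ref{P:first character}(3)), not ``$(\aleph_0,1)$-purity considerations''.
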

\begin{proof}
Set $B=(M\oplus F)/S$, where $F$ is the free module
with the basis $\{x\}$ and $S$ is the submodule of
$M\oplus F$ generated by $\{(m_j, - r_jx) ~|~j\in
J\}$. Obviously,  $$S=\{(\sum_{k=1}^{n}z_km_k,-\sum_{k=1}^{n} z_{k}r_k x)~|~ n\in\mathbb{N},~ z_k\in R\}.$$
 Clearly,  the map $\alpha: M\rightarrow B$
defined by    $\alpha(m)=(m,0)+S$ ($m\in M$) is an
$R$-homomorphism. We claim that $\alpha$ is a monomorphism. To
see this, let $\alpha(m)=(m,0)+S=0$ for some $m\in M$. Then
$\displaystyle{m=\sum_{k=1}^{n}z_km_k}$ and $\displaystyle{\sum_{k=1}^{n} z_{k} r_{k}x=0}$ for
some $z_1,\ldots, z_n\in R$. Since the system of  equations $(*)$
is compatible, we conclude that  $m=0$ and so $\alpha$ is a
monomorphism. One can easily  see that $b=(0,x)+S\in B$ is a solution of the system of equations
$ r_jX=(m_j,0)+S\in \alpha(M)$. We claim that $\alpha(M)$
is a pure submodule of $M$. Let
$$\sum _{k=1}^{n}c_{lk}y_k=(m'_{l},0)+S\in \alpha(M), ~1\leq l\leq w,  c_{lk}\in R, m'_{l}\in M \qquad (**)$$ be a system of
equations with the solution $\{(a_k,t_{k}x)+S\}_{k=1}^{n}\subseteq B$, where  $a_{k}\in
M$ and $t_k\in R$. Then
$\displaystyle{(\sum _{k=1}^nc_{lk}a_{k}-m'_{l},-\sum_{k=1}^n c_{lk}t_{k}x)\in S}$ for each $1\leq l\leq w$. Therefore, for
each $l$ ($1\leq l\leq w$), there exist $n_l\in \mathbb{N}$,
$z_{l1},\ldots, z_{ln_l}\in R$ such that
$$\sum _{k=1}^nc_{lk}a_{k}-m'_{l}=\sum_{s=1}^{n_l}z_{ls}m_{ls} \qquad (1)$$ and
 $$\sum_{k=1}^nc_{lk}t_{k}=-\sum_{s=1}^{n_l}z_{ls}r_{ls} \qquad (2).$$ Since the system of equations $(*)$ is finitely solvable,
 there exists $m'\in M$ such that $r_{ls}m'=m_{ls}$ for some finite subset  $\{ls\}\subset J$.
 In view of $(1)$ and $(2)$ we conclude that $$\sum_{k=1}^nc_{lk}t_{k}m'=-\sum_{s=1}^{n_l}z_{ls}r_{ls}m'=-\sum_{s=1}^{n_l}z_{ls}m_{ls}=-(\sum _{k=1}^nc_{lk}a_{k}-m'_{l}).$$ Therefore, $\displaystyle{\sum _{k=1}^nc_{lk}(a_{k}-t_{k}m')=m'_{l}}$. Thus $\{(a_k-t_{k}m',0)+S\}_{k=1}^{n}\subseteq \alpha(M)$
 is a solution of the system $(**)$.
It means that $\alpha(M)$ is a pure submodule of $B$.
\end{proof}

 As in \cite{Azu87}, a left $R$-module  $M$ is  {\it singly split} in $B$ if, for every
submodule $A$ of $B$ which is a single extension of $M$, $M$ is a direct
summand of $A$, and $M$ is said to be {\it singly pure-injective} if $M$ is singly split in any pure extension of itself.  Recall that an exact sequence $\varepsilon: 0\rightarrow A
\rightarrow B \rightarrow
C\rightarrow 0$ of left $R$-modules is  {\it  cyclically pure } if every cyclic left $R$-module has the
projective property relative to $\varepsilon$ (see \cite{Sim87}).
Proposition~\ref{P:first character} leads us to obtain the following characterizations of semi-compact left  $R$-modules.

\begin{theorem} \label{T:character} 
  Let $M$ be a left $R$-module. Then the following statements are equivalent: 
\begin{enumerate}
\item $M$ is  semi-compact.
\item $M$ has the injective property relative to every ($\aleph_0,1$)-pure exact sequence $0\rightarrow  A\rightarrow  B\rightarrow  C \rightarrow 0$
where $C$ is a cyclic left $R$-module.
\item $M$ is a direct summand of every single ($\aleph_0,1$)-pure extension.
\item Every pure extension of $M$ is cyclically pure.
\item $M$ is a direct summand of every module $B$ if $B$ contains $M$ as an ($\aleph_0,1$)-pure submodule and if $B/M$ is a direct summand of a direct sum of cyclic left $R$-modules.
\item $M$ has the injective property relative to every ($\aleph_0,1$)-pure exact sequence $0\rightarrow  A\rightarrow  B\rightarrow  C \rightarrow 0$
where $C$ is a direct summand of a direct sum of cyclic left $R$-modules.
\item $M$ is singly pure-injective.
\end{enumerate}
\end{theorem}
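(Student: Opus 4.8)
The plan is to prove $(1)\Rightarrow(6)\Rightarrow(2)\Rightarrow(3)\Rightarrow(1)$, together with $(6)\Rightarrow(5)\Rightarrow(3)$ and the two separate equivalences $(1)\Leftrightarrow(7)$ and $(1)\Leftrightarrow(4)$; running through these closes the loop on all seven statements. The permanent tools will be the equational form of semi-compactness from Proposition~\ref{P:first character}(3) (every compatible, finitely solvable system $r_jx=m_j\in M$ is solvable in $M$), Lemma~\ref{L:upper} (such a system always acquires a solution in a suitable single pure extension of $M$), and the recalled equational description of $(\aleph_0,1)$-purity (a finite one-unknown system $r_jx=a_j\in A$ that is solvable in $B$ is already solvable in $A$).

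I would dispose of the routine steps first. For $(6)\Rightarrow(2)$ one notes that a cyclic module is a direct summand of a direct sum of one cyclic module. For $(6)\Rightarrow(5)$, $(5)\Rightarrow(3)$ and $(2)\Rightarrow(3)$ one applies the relevant injective/summand property to the canonical sequence $0\to M\to B\to B/M\to0$, where $B$ is the extension under consideration, to obtain a retraction $B\to M$ and hence the desired direct summand; this uses that a pure extension is $(\aleph_0,1)$-pure and that the (cyclic) cokernel of a single extension is, trivially, a direct summand of a direct sum of cyclic modules. For $(3)\Rightarrow(1)$: given a compatible, finitely solvable system $r_jx=m_j\in M$, Lemma~\ref{L:upper} supplies a single pure --- hence single $(\aleph_0,1)$-pure --- extension $B\supseteq M$ in which the system has a solution $b$; by $(3)$ there is a retraction $\pi\colon B\to M$, and $\pi(b)\in M$ solves the system, so $M$ is semi-compact by Proposition~\ref{P:first character}.

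The crux is $(1)\Rightarrow(6)$. After identifying $A$ with its image in $B$ and writing $C=B/A$ as a direct summand of a direct sum $D=\bigoplus_{i\in\Lambda}R/I_i$ of cyclic modules, I would first reduce to the case $C=D$: form the direct sum of $0\to A\to B\to C\to0$ with a split sequence $0\to0\to C'\to C'\to0$ (where $D=C\oplus C'$), check that $(\aleph_0,1)$-purity is preserved, and note that extending a map $A\to M$ over $B\oplus C'$ restricts to an extension over $B$. In the reduced situation choose, for each $i$, an element $b_i\in B$ lifting the $i$-th standard generator, so that $I_ib_i\subseteq A$ and every element of $B$ has the (non-unique) form $a+\sum_{i\in F}r_ib_i$ with $F\subseteq\Lambda$ finite. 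For a given $\varphi\colon A\to M$ and each $i$, consider the system $sx=\varphi(sb_i)$, $s\in I_i$, over $M$: it is compatible, and it is finitely solvable, since a finite subsystem $\{s_kx=s_kb_i\}_k$ over $A$ is solved in $B$ by $b_i$, hence in $A$ by some $a_i$ via $(\aleph_0,1)$-purity, whence $\varphi(a_i)$ solves it over $M$. Semi-compactness then gives $m_i\in M$ with $sm_i=\varphi(sb_i)$ for all $s\in I_i$, and one defines $\psi(a+\sum r_ib_i)=\varphi(a)+\sum r_im_i$. I expect the main obstacle to lie in checking that $\psi$ is well defined and $R$-linear: a relation $a+\sum_ir_ib_i=0$ in $B$ projects to a relation in $C=\bigoplus R/I_i$ forcing $r_i\in I_i$ for every $i$, and then $\sum_ir_im_i=\sum_i\varphi(r_ib_i)=-\varphi(a)$ precisely because the family $(m_i)_{i\in\Lambda}$ was chosen coherently component by component. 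Then $\psi$ extends $\varphi$, which is $(6)$.

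Finally I would settle the two side equivalences. For $(1)\Rightarrow(7)$: if $B$ is a pure extension of $M$ and $A=Rb_0+M\subseteq B$ is a single extension, then $M$ is still pure in $A$; setting $I=\{r\in R\mid rb_0\in M\}$, the compatible system $rx=-rb_0$ ($r\in I$) over $M$ is solved in $B$ by $-b_0$, hence is finitely solvable in $M$ by purity and solvable in $M$ by semi-compactness, and a solution $m$ makes $a_0=b_0+m$ satisfy $Ia_0=0$ with $a_0+M$ generating $A/M$, so that $r+I\mapsto ra_0$ splits $0\to M\to A\to A/M\to0$; thus $M$ is singly pure-injective. For $(7)\Rightarrow(1)$: Lemma~\ref{L:upper} embeds a given compatible, finitely solvable system into a single pure extension $B\supseteq M$ with a solution, single pure-injectivity splits $M$ off $B$, and the retraction returns the solution to $M$. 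For $(1)\Rightarrow(4)$: the very same computation as in $(1)\Rightarrow(7)$, run for an arbitrary left ideal $I$ and any $b\in B$ with $Ib\subseteq M$, shows that $R/I$ is projective relative to $0\to M\to B\to B/M\to0$, i.e. every pure extension of $M$ is cyclically pure. For $(4)\Rightarrow(7)$: given $A=Rb_0+M$ inside a pure extension $B$, $M$ is pure in $A$, hence by $(4)$ the sequence $0\to M\to A\to A/M\to0$ is cyclically pure, so the cyclic module $A/M$ is projective relative to it and the sequence splits; thus $M$ is singly pure-injective.
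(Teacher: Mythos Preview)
Your proof is correct. The logical skeleton you set up does close on all seven statements, the direct construction in $(1)\Rightarrow(6)$ is sound (the well-definedness check is exactly what you outline, since a relation $a+\sum r_ib_i=0$ forces each $r_i\in I_i$ after projecting to $C=\bigoplus R/I_i$), and your direct treatments of $(1)\Leftrightarrow(7)$ and $(1)\Leftrightarrow(4)$ via the system $rx=-rb_0$, $r\in I$, are clean.

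The route, however, differs substantially from the paper's. The paper cycles $(1)\Rightarrow(2)\Rightarrow(3)\Rightarrow(4)\Rightarrow(5)\Rightarrow(6)\Rightarrow(3)\Rightarrow(1)$ and treats $(4)\Leftrightarrow(7)$ separately; the key steps $(3)\Rightarrow(4)$ and $(4)\Leftrightarrow(7)$ are obtained by invoking Azumaya's \cite[Theorems~3 and~10]{Azu87}, while $(5)\Rightarrow(6)$ is handled by a pushout argument together with \cite[33.4(2)]{Wis91}. In contrast, you prove everything in-house: you replace the Azumaya citations by explicit splittings built from a global solution of the system $rx=-rb_0$, and you replace the pushout step by a direct extension-construction in $(1)\Rightarrow(6)$, reducing first to the case where $C$ is itself a direct sum of cyclics. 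What the paper's approach buys is brevity and a more categorical flavor (the pushout neatly transfers $(\aleph_0,1)$-purity and reduces $(6)$ to $(5)$); what yours buys is self-containment and an argument that does not depend on the reader tracking down the Azumaya and Wisbauer results. Either organization is acceptable.
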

\begin{proof}
(1)$\Rightarrow$(2). Since $C$ is cyclic then $B=A+Rb$ for some $b\in B$. Let  $f:A\rightarrow M$ be a homomorphism of left $R$-modules. Let $\{r_j\}_{j\in J}\subseteq R$ be the set of all elements of $R$
such that $r_jb\in A$. Since $A$ is an ($\aleph_0,1$)-pure submodule of $B$, the system of equations $r_jx=f(r_jb)\in M$ is finitely solvable and by hypothesis it has a solution $m\in M$. Obviously, $\phi:B\rightarrow M$ defined by $\phi(a+sb)=f(a)+sm$  for each $a\in A$ and $s\in R$ is an extension of $f$.

(2)$\Rightarrow$(3) is clear.

(3)$\Rightarrow$(4). Let $B$ be a  pure extension of $M$ and
$A\subseteq B$ be a singly extension of $M$. Then $M$ is an ($\aleph_0,1$)-pure submodule of $A$. By hypothesis,  $M$ is a summand of $A$ and so $M$ is singly split in $B$. Therefore, \cite[Theorem 3]{Azu87} implies that  the sequence $0\rightarrow  M\rightarrow  B\rightarrow  B/M \rightarrow 0$ is cyclically pure.

(4)$\Rightarrow$(5). It is well-known that every direct summand of a direct sum of cyclic modules has the projective property relative to each cyclically pure exact sequence.

(5)$\Rightarrow$(6). Let $f: A\rightarrow M$ be a homomorphism and $u: A\rightarrow B$ the inclusion map. We consider the following pushout diagram: 
\[\begin{CD}
 A @>u>> B \\
 @V{f}VV  @V{g}VV \\
  M @>v>>  T
\end{CD}\]
By   \cite[33.4(2)]{Wis91} $v$ is an ($\aleph_0,1$)-pure monomorphism. Since coker $v\cong C$ then $v$ is a split monomorphism. So, $f$ extends to a homomorphism from $B$ into $M$.

(6)$\Rightarrow$(3) is clear.

(3)$\Rightarrow$(1). Let $r_jx=m_j, j\in J, r_j\in R, m_j\in M$,
be a finitely solvable system of equations in $M$. By Lemma \ref{L:upper}, there exists
 a single pure extension $B$ of $M$ such that the system of equations $(*)$ contains a solution $b\in B$. By hypothesis,  $M$ is a direct summand of $B$.
Thus there exists a submodule $A$ of $B$ such
that $B=M\oplus A$. Therefore,  there exist
$m\in M$ and $a\in A$ such that $b=m+a$. Since $r_{j}b=m_j$ for each $j\in J$, we conclude that $r_{j}m-r_{j}a=m_j$. Thus $r_ja=r_{j}m-m_j\in A\cap
M=0$. 

(4)$\Leftrightarrow$(7) by \cite[Theorem 10]{Azu87}.
 \end{proof}

\begin{remark} \label{R:cyclically pure}
\textnormal{ Recall that a submodule $A$ of a left $R$-module $B$ is {\it cyclically pure} if and only if
the exact sequence $ 0\longrightarrow A
\hookrightarrow B \stackrel{g}\longrightarrow
B/A\longrightarrow 0$  is cyclically pure. By \cite[Proposition 1.2]{GrHi09}, $A$ is a cyclically pure submodule of $B$
if and only if for each index set $J$, any system of equations $r_j x=
a_j\in A$ ($r_j\in R$, $j\in J$) is   solvable in $A$ whenever  it is solvable in $B$.}
\end{remark}

\begin{corollary} \label{C:sum of semicompact}
Let $R$ be a  ring and $\{M_i\}_{i\in I}$ be a set of semi-compact left $R$-modules. Then $\oplus_{i\in I} M_i$ is semi-compact if and only if
$\oplus_{i\in I} M_i$  is a cyclically pure submodule of $\prod_{i\in I}M_i $.
\end{corollary}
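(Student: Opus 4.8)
The plan is to derive this as a direct consequence of Theorem \ref{T:character}, specifically the equivalence (1)$\Leftrightarrow$(4), together with the classical fact that a direct sum $\bigoplus_{i\in I} M_i$ of pure-injective (hence, in particular, here we do not need full pure-injectivity) modules embeds purely in the product $\prod_{i\in I} M_i$, and that any direct sum of modules is a pure submodule of the corresponding product. First I would recall that for \emph{any} family $\{M_i\}_{i\in I}$ of left $R$-modules, the canonical inclusion $\bigoplus_{i\in I} M_i \hookrightarrow \prod_{i\in I} M_i$ is a pure monomorphism: a finite system of equations with parameters in $\bigoplus M_i$ involves only finitely many coordinates, so solvability in $\prod M_i$ forces solvability coordinatewise and the solution again lies in the direct sum. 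Thus $\prod_{i\in I} M_i$ is a pure extension of $\bigoplus_{i\in I} M_i$.

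Now for the forward direction, suppose $\bigoplus_{i\in I} M_i$ is semi-compact. By Theorem \ref{T:character}, (1)$\Rightarrow$(4), every pure extension of $\bigoplus_{i\in I} M_i$ is cyclically pure; applying this to the pure extension $\prod_{i\in I} M_i$ gives that $\bigoplus_{i\in I} M_i$ is a cyclically pure submodule of $\prod_{i\in I} M_i$. For the converse, suppose $\bigoplus_{i\in I} M_i$ is a cyclically pure submodule of $\prod_{i\in I} M_i$; I would verify semi-compactness via Proposition \ref{P:first character}(3). Let $r_j x = m_j$, $j\in J$, be a finitely solvable system of equations with $m_j \in \bigoplus_{i\in I} M_i$. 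In each coordinate $i$, the projected system $r_j x = \pi_i(m_j)$ is finitely solvable in $M_i$, and since each $M_i$ is semi-compact it has a global solution $z_i \in M_i$; the element $z = (z_i)_{i\in I} \in \prod_{i\in I} M_i$ then solves the system in the product. Because the system has parameters in $\bigoplus_{i\in I} M_i$ and this is cyclically pure in $\prod_{i\in I} M_i$, Remark \ref{R:cyclically pure} gives that the system is already solvable in $\bigoplus_{i\in I} M_i$. Hence $\bigoplus_{i\in I} M_i$ is semi-compact.

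The only subtle point — and the step I would be most careful about — is matching the index sets in the application of Remark \ref{R:cyclically pure}: cyclic purity as characterized there allows systems $r_j x = a_j$ indexed by an \emph{arbitrary} set $J$ (not merely finite $J$), so the argument in the converse direction goes through for the possibly infinite system coming directly from Proposition \ref{P:first character}(3) without first having to pass to finite subsystems. I would state this explicitly to make clear that it is precisely the strength of cyclic purity (as opposed to $(\aleph_0,1)$-purity, which only handles finite systems) that is being used, and that this is exactly why the condition in the corollary is the right one.
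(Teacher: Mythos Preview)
Your proof is correct and follows essentially the same approach as the paper: both directions rely on Theorem~\ref{T:character} and Remark~\ref{R:cyclically pure} in the way you describe. The only cosmetic difference is that the paper phrases the converse by simply asserting that $\prod_{i\in I} M_i$ is semi-compact (a fact your coordinatewise argument is precisely the proof of), whereas you spell out the projection-and-reassembly step explicitly; and the paper leaves the purity of $\bigoplus M_i$ in $\prod M_i$ implicit in ``clear by Theorem~\ref{T:character}'' for the forward direction.
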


\begin{proof}($\Rightarrow$) is clear by Theorem \ref{T:character}.\\
($\Leftarrow$). It is sufficient to show that  every finitely solvable system of equations of the form  $r_j x=
a_j\in \oplus_{i\in I} M_i$ ($r_j\in R$, $j\in J$) is   solvable in
$A$. Since $\prod_{i\in I}M_i $ is semi-compact, there exists an element $b\in \prod_{i\in I}M_i$ such that $r_jb=a_j$ for each $j\in J$. Since
$\oplus_{i\in I} M_i$ is a cyclically pure submodule of  $\prod_{i\in I}M_i$, by Remark \ref{R:cyclically pure}, there is an element $a\in \oplus_{i\in I} M_i$ such that
$r_ja=a_j$ for each $j\in J$.
\end{proof}

From  Theorem~\ref{T:character} and Remark~\ref{R:f-injective}, we deduce the following corollary.

\begin{corollary} \label{C:Noetherian} 
  Let $R$ be a ring. Then the following statements are equivalent: 
\begin{enumerate}
\item $R$ is left Noetherian.
\item Every left $R$-module is semi-compact.
\item Every direct sum of semi-compact left $R$-modules is semi-compact.
\end{enumerate}
\end{corollary}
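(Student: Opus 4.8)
The plan is to establish the three equivalences by a cycle of implications, leaning on the characterizations already proved. First I would recall that by Remark~\ref{R:f-injective} (via Bass's theorem on perfect/Noetherian rings) every direct sum of semi-injective modules is semi-injective, and that a module is injective exactly when it is both semi-injective and semi-compact. Also, by Theorem~\ref{T:character}, a module is semi-compact iff it is singly pure-injective.

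\medskip
\noindent\textbf{(1)$\Rightarrow$(2).} Assume $R$ is left Noetherian. Then every module is injective relative to pure exact sequences in the usual way — more directly, over a left Noetherian ring every finitely solvable system of equations $r_jx=m_j$ ($j\in J$) is already solvable: the left ideal generated by the $r_j$ is finitely generated, say by $r_{j_1},\dots,r_{j_n}$, and a solution of the corresponding finite subsystem solves the whole system because of the compatibility hypothesis. Hence by Proposition~\ref{P:first character}(3) every left $R$-module is semi-compact.

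\medskip
\noindent\textbf{(2)$\Rightarrow$(3)} is immediate since a direct sum of left $R$-modules is again a left $R$-module.

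\medskip
\noindent\textbf{(3)$\Rightarrow$(1).} This is the substantive direction, and the cleanest route is Remark~\ref{R:f-injective}: given (3), I want to show every direct sum of injective left $R$-modules is injective, for then $R$ is left Noetherian by the classical Bass–Papp theorem. So take injective modules $\{E_i\}_{i\in I}$. Each $E_i$ is in particular semi-compact, so by (3) the direct sum $\bigoplus_i E_i$ is semi-compact. On the other hand each $E_i$ is semi-injective, and a direct sum of semi-injective modules is semi-injective (checking the extension property for a map out of a finitely generated left ideal is a finite computation landing in finitely many coordinates). By the criterion ``injective $=$ semi-injective $+$ semi-compact'' of Remark~\ref{R:f-injective}, $\bigoplus_i E_i$ is injective, and Bass–Papp yields that $R$ is left Noetherian.

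\medskip
The main obstacle to watch is making sure the ``direct sum of semi-injective is semi-injective'' step is genuinely valid without a Noetherian hypothesis — it is, because a homomorphism from a finitely generated left ideal into $\bigoplus_i E_i$ has image contained in a finite subsum, and one extends coordinatewise. With that in hand the argument is a short cycle, and everything else is bookkeeping already supplied by Theorem~\ref{T:character} and Remark~\ref{R:f-injective}.
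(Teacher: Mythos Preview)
Your proof is correct, and for (2)$\Rightarrow$(3) and (3)$\Rightarrow$(1) it coincides with the paper's: the latter is precisely the content of Remark~\ref{R:f-injective}, which you have simply unpacked (direct sums of semi-injectives are semi-injective, then apply Bass--Papp). For (1)$\Rightarrow$(2) you take a slightly more direct route than the paper. You use Proposition~\ref{P:first character}(3): over a left Noetherian ring the ideal $\sum_j Rr_j$ is generated by finitely many $r_{j_1},\dots,r_{j_n}$, and a solution of that finite subsystem solves every equation by compatibility. The paper instead invokes Theorem~\ref{T:character}(3): over a left Noetherian ring every cyclic module is finitely presented, hence projective relative to any $(\aleph_0,1)$-pure sequence, so every single $(\aleph_0,1)$-pure extension of $M$ splits. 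Both arguments are short; yours avoids the purity machinery entirely, while the paper's makes the role of finite presentation of cyclics more visible.
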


\begin{proof}  (1)$\Rightarrow$(2). Since every cyclic left $R$-module is finitely presented and so pure projective, so, 
by Theorem~\ref{T:character} (3), every $R$-module is semi-compact.

(2)$\Rightarrow$(3) is obvious and (3)$\Rightarrow$(1) holds by Remark~\ref{R:f-injective}.
\end{proof}

A ring $R$ is called left {\it pure-semisimple} if each left $R$-module is pure-injective. Recall that a ring $R$ is {\it left perfect} if each flat left $R$-module is projective.

Theorem~\ref{T:character}  implies that there exists a semi-compact left $R$-module which is not pure injective.
It is easy to see that a left Noetherian ring $R$ is pure-semisimple if and only if every semi-compact left $R$-module is pure injective.
 In the next theorem we show that  a domain $R$ is a division ring if and only if the
 class of semi-compact $R$-modules and the class of pure injective $R$-modules coincide.

\begin{theorem} \label{T:Division} 
Let $R$ be a domain (not necessarily commutative). Then $R$ is a division ring if and only if every semi-compact
$R$-module is pure injective.
\end{theorem}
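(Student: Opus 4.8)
The plan is to prove the two implications separately. The forward implication is immediate: if $R$ is a division ring, then every left $R$-module is a vector space, hence free, hence injective (Baer's criterion holds vacuously, as the only left ideals of $R$ are $0$ and $R$), hence pure-injective; so every semi-compact left $R$-module is pure-injective, and in fact the two classes coincide. The bulk of the work is the converse, which I would reduce to a chain condition on $R$ and then settle by invoking the classical theory of $\Sigma$-pure-injective modules.

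So assume every semi-compact left $R$-module is pure-injective. I would first record two elementary facts about a domain $R$. First, $R$ is a division ring as soon as every nonzero element of $R$ is right-invertible: if $ab=1$ with $a\neq 0$, then $b\neq 0$, so $bc=1$ for some $c$, whence $a=a(bc)=(ab)c=c$ and therefore $ba=bc=1$. Second, if $R$ satisfies the descending chain condition on principal right ideals, then every nonzero $a\in R$ is right-invertible: the chain $aR\supseteq a^2R\supseteq\cdots$ stabilizes at some $n$, so $a^n=a^{n+1}b=a^n(ab)$ for some $b\in R$, hence $a^n(1-ab)=0$, and since $a^n\neq 0$ this forces $ab=1$. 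Consequently it suffices to prove that $R$ satisfies the d.c.c.\ on (principal, or finitely generated) right ideals.

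To obtain this chain condition, I would observe that the free left $R$-module $R^{(\mathbb N)}$ is flat, hence torsion-free because $R$ is a domain, hence semi-compact by Example~\ref{E:exe}(ii), and hence pure-injective by hypothesis. Since $R^{(\mathbb N)}=({}_RR)^{(\mathbb N)}$ is pure-injective, the left module ${}_RR$ is $\Sigma$-pure-injective, and a $\Sigma$-pure-injective module satisfies the d.c.c.\ on its pp-definable subgroups (``finite matrix subgroups''); this is the pure-injective counterpart of the descending-chain characterisation of $\Sigma$-semi-compactness (compare Theorem~\ref{T:sigma semicompact}) and is classical (Gruson--Jensen, Zimmermann). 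Now every finitely generated right ideal $a_1R+\cdots+a_kR$ is a pp-definable subgroup of ${}_RR$, being the solution set in ${}_RR$ of the pp-formula $\exists y_1,\dots,y_k\,(x=a_1y_1+\cdots+a_ky_k)$; hence $R$ has the d.c.c.\ on finitely generated right ideals, and by the second fact above $R$ is a division ring.

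I expect the only genuinely nonroutine step to be the middle one, namely extracting a one-sided descending chain condition from the pure-injectivity of the countably generated free module: this is exactly where one has to step outside the semi-compact world and bring in the classical machinery of $\Sigma$-pure-injective modules. Everything else --- the reduction to the chain condition and the implication ``d.c.c.\ on principal right ideals $+$ domain $\Rightarrow$ division ring'' --- is elementary and uses nothing about $R$ beyond the absence of zero divisors.
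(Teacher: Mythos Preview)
Your proof is correct, but it follows a different route from the paper's. The paper argues directly that every flat left $R$-module is projective: given a flat $M$, take a pure exact sequence $0\to K\to F\to M\to 0$ with $F$ free; since $R$ is a domain, $K$ is torsion-free, hence semi-compact (Example~\ref{E:exe}(ii)), hence pure-injective by hypothesis, so the sequence splits and $M$ is projective. Thus $R$ is left perfect, and a left perfect domain is a division ring. Your approach instead singles out the specific module $R^{(\mathbb N)}$, deduces that ${}_RR$ is $\Sigma$-pure-injective, and then invokes the Gruson--Jensen/Zimmermann characterisation via the d.c.c.\ on pp-definable subgroups to extract the d.c.c.\ on (principal) right ideals. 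Both arguments ultimately land on the same chain condition---left perfect is exactly d.c.c.\ on principal right ideals---but the paper's version is more self-contained, using only the elementary observation that pure submodules of torsion-free modules are torsion-free and the definition of pure-injectivity, whereas yours imports the finite-matrix-subgroup machinery from outside the paper. On the other hand, your route makes explicit the parallel with Theorem~\ref{T:sigma semicompact} and foreshadows the chain-condition theme of Section~\ref{S:flat}.
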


\begin{proof}
If $R$ is a division ring, it is obvious that each semi-compact module is pure injective. Conversely, let $M$ be a  flat left $R$-module. Then there exists a
free left $R$-module $F$ and submodule $K$ of $F$ such that
$\varepsilon: 0\rightarrow K \rightarrow F \rightarrow
M\rightarrow 0$ is pure exact. Since $R$ is domain, $F$ and
$K$ are torsion-free and so semi-compact. By hypothesis,
$K$ is pure injective and $\varepsilon$ splits. Therefore, $M$ is projective and so $R$ is left perfect.
 This implies that $R$ is a division ring.
 \end{proof}

Recall that a ring $R$ is {\it von-Neumann regular} if for each element $a$ of $R$ there exists $b\in R$ such that $a=aba$. It is equivalent to the fact that
every finitely generated left ideal is a summand of $R$. Also, it is known that a ring $R$ is von-Neumann regular if and only if every pure injective left $R$-module is injective. In the next theorem we have  another characterization of von-Neumann regular rings.

\begin{theorem} \label{T:Von} 
  Let $R$ be a ring. Then the following statements are equivalent: 
\begin{enumerate}
\item $R$ is von-Neumann regular.
\item Every pure injective left $R$-module is  injective.
\item Every semi-compact left $R$-module is  injective.
\end{enumerate}
\end{theorem}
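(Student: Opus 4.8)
The plan is to establish the cycle $(1)\Rightarrow(3)\Rightarrow(2)\Rightarrow(1)$, using the classical equivalence $(1)\Leftrightarrow(2)$ recalled just before the statement as the implication $(2)\Rightarrow(1)$ (every pure-injective left $R$-module is injective precisely when $R$ is von-Neumann regular).

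First I would prove $(1)\Rightarrow(3)$. Assume $R$ is von-Neumann regular, so that every finitely generated left ideal $I$ of $R$ is a direct summand, say $R=I\oplus I'$. Then for any left $R$-module $M$ and any $R$-homomorphism $f\colon I\to M$, composing $f$ with the projection $R\to I$ extends $f$ to all of $R$; hence every left $R$-module is semi-injective (f-injective) in the sense of Remark~\ref{R:f-injective}. Now let $M$ be any semi-compact left $R$-module. By Remark~\ref{R:f-injective} a module is injective if and only if it is simultaneously semi-injective and semi-compact, so $M$ is injective, which gives $(3)$.

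Next, $(3)\Rightarrow(2)$ is immediate: by Example~\ref{E:exe}(i) every pure-injective left $R$-module is semi-compact, so under $(3)$ every pure-injective left $R$-module is injective, which is exactly $(2)$. Then $(2)\Rightarrow(1)$ is the known implication recalled above, closing the cycle.

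The point to watch is that $(2)\Rightarrow(3)$ is not available directly: semi-compactness is strictly weaker than pure-injectivity (Theorem~\ref{T:character} already exhibits semi-compact modules that fail to be pure-injective), so one cannot deduce injectivity of a semi-compact module from $(2)$ alone. Routing through von-Neumann regularity, and in particular through the observation that over such a ring every module is automatically semi-injective, is the only genuinely non-formal step; the rest is bookkeeping with Example~\ref{E:exe} and Remark~\ref{R:f-injective}.
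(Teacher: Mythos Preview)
Your proof is correct and follows essentially the same route as the paper: $(1)\Rightarrow(3)$ via the observation that every module over a von~Neumann regular ring is semi-injective together with Remark~\ref{R:f-injective}, and $(3)\Rightarrow(2)$ trivially. The only difference is cosmetic: for $(2)\Rightarrow(1)$ the paper supplies a short direct argument (every module is a pure submodule of a pure-injective, hence injective, module and is therefore semi-injective, forcing $R$ to be von~Neumann regular) rather than invoking the known equivalence you cite.
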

\begin{proof}  (3)$\Rightarrow$(2) is obvious.

(2)$\Rightarrow$(1). Each left module is semi-injective because it is a pure submodule of a pure injective module which is injective. So, $R$ is von Neumann regular since each left module is semi-injective.

(1)$\Rightarrow$(3). In this case each left $R$-module is semi-injective. So, by \cite[Lemma 5.5]{Mat85} a left $R$-module is injective if and only if it is semi-compact.  
\end{proof}

\bigskip

\section{Rings whose flat modules are semi-compact}\label{S:flat}

\begin{definition} \label{D:def sigma}
\textnormal{Let $R$ be a ring and $M$ a left $R$-module. We say that $M$ is {\it $\Sigma$-semi-compact} if all direct sums of copies of $M$ are semi-compact.
By Example~\ref{E:exe}(2) every torsion-free module over an integral domain is $\Sigma$-semi-compact.}
\end{definition}

Faith in \cite{Fai66}, proved that an injective  left $R$-module $M$ is $\Sigma$-injective if and only if
$R$ satisfies the a.c.c. on the left ideals in $\mathcal{A}_l(R ,M)$ (equivalently, $M$ satisfies the d.c.c. on the subgroups in $\mathcal{A}_r(R, M)$).
We need the following proposition  of \cite{Fai66} to characterize  $\Sigma$-semi-compact left $R$-modules.

\begin{proposition} \label{P:faith proposition} \textnormal{\cite[Proposition 1]{Fai66}}
Let $M$ be a left $R$-module. Then $\mathcal{A}_l(R, M)$ satisfies the a.c.c., equivalently, $M$ satisfies the d.c.c. on the subgroups in $\mathcal{A}_r(R, M)$, if and only if  for each left ideal $I$ of $R$, there exists a
finitely generated subideal $I_1$ such that $M[I]=M[I_1]$.
\end{proposition}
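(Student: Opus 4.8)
The plan is to reduce everything to the ascending chain condition on $\mathcal{A}_l(R,M)$. The equivalence of the a.c.c.\ on $\mathcal{A}_l(R,M)$ with the d.c.c.\ on $\mathcal{A}_r(R,M)$ is free: the map $X\mapsto {}^{\bot}X$ is the order anti-isomorphism recorded in the opening definition, and an order anti-isomorphism carries ascending chains to descending chains. So it suffices to prove that the a.c.c.\ on $\mathcal{A}_l(R,M)$ holds if and only if every left ideal $I$ of $R$ has a finitely generated subideal $I_1$ with $M[I]=M[I_1]$. Throughout I would use the two ``Galois'' identities ${}^{\bot}\bigl(M[{}^{\bot}X]\bigr)={}^{\bot}X$ for $X\subseteq M$ and $M\bigl[{}^{\bot}(M[J])\bigr]=M[J]$ for a left ideal $J$; each follows from the two evident inclusions $J\subseteq {}^{\bot}(M[J])$ and $M[J]\subseteq M[{}^{\bot}(M[J])]$ (the first by the definition of $M[J]$, the second by the definition of ${}^{\bot}$) upon applying ${}^{\bot}(-)$, resp.\ $M[-]$, which reverses them. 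In particular the members of $\mathcal{A}_l(R,M)$ are exactly the left ideals $J$ with $J={}^{\bot}(M[J])$, and the members of $\mathcal{A}_r(R,M)$ are exactly the subgroups of the form $M[J]$.

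For the direction ``a.c.c.\ $\Rightarrow$ finite generation'', I would fix a left ideal $I$ and consider the subfamily $\mathcal{F}=\{\,{}^{\bot}(M[I_0]) : I_0\subseteq I\text{ finitely generated}\,\}$ of $\mathcal{A}_l(R,M)$. It is nonempty (take $I_0=0$), so by the a.c.c.\ it has a maximal member ${}^{\bot}(M[I_1])$ with $I_1\subseteq I$ finitely generated, and I claim $M[I_1]=M[I]$. The inclusion $M[I]\subseteq M[I_1]$ is clear; if it were strict, pick $m\in M[I_1]\setminus M[I]$ and then $r\in I$ with $rm\neq 0$, and set $I_2=I_1+Rr$, again a finitely generated subideal of $I$. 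By Lemma~\ref{L:M[I][J]}, $M[I_2]=M[I_1]\cap M[Rr]$, and since $m\notin M[Rr]$ this is strictly smaller than $M[I_1]$; applying ${}^{\bot}(-)$ and using $M\bigl[{}^{\bot}(M[-])\bigr]=M[-]$ to rule out equality gives ${}^{\bot}(M[I_2])\supsetneq {}^{\bot}(M[I_1])$, contradicting maximality.

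For the converse, I would take an ascending chain $J_1\subseteq J_2\subseteq\cdots$ in $\mathcal{A}_l(R,M)$ and set $I=\bigcup_n J_n$; note that $I$ is a left ideal but need not itself belong to $\mathcal{A}_l(R,M)$, which is precisely why the hypothesis must be available for an arbitrary left ideal as stated. By hypothesis $M[I]=M[I_1]$ for some finitely generated $I_1\subseteq I$, and finite generation of $I_1$ together with $I=\bigcup_n J_n$ forces $I_1\subseteq J_N$ for some $N$. Then for $n\geq N$ the inclusions $I_1\subseteq J_N\subseteq J_n\subseteq I$ yield $M[I]\subseteq M[J_n]\subseteq M[J_N]\subseteq M[I_1]=M[I]$, so $M[J_n]=M[I]$, and hence $J_n={}^{\bot}(M[J_n])={}^{\bot}(M[I])$ is independent of $n$ for $n\geq N$; the chain stabilizes.

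I do not anticipate a genuine obstacle here. The one step that needs care is the passage from a strict inclusion $M[I_2]\subsetneq M[I_1]$ to the strict inclusion ${}^{\bot}(M[I_2])\supsetneq {}^{\bot}(M[I_1])$: this does not follow merely from ${}^{\bot}(-)$ being order-reversing and must be deduced from the identity $M\bigl[{}^{\bot}(M[J])\bigr]=M[J]$ (equivalently, from the fact that $X\mapsto{}^{\bot}X$ restricts to an honest bijection between $\mathcal{A}_r(R,M)$ and $\mathcal{A}_l(R,M)$).
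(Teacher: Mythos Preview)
Your proof is correct. Note, however, that the paper does not supply its own proof of this proposition: it is stated with the attribution \cite[Proposition 1]{Fai66} and no argument is given in the text, so there is no in-paper proof to compare against. Your argument is the standard one (and is essentially Faith's): use the a.c.c.\ to pick a maximal ${}^{\bot}(M[I_0])$ among finitely generated $I_0\subseteq I$ and enlarge $I_0$ by one generator to derive a contradiction; conversely, push a chain in $\mathcal{A}_l(R,M)$ into its union, capture a finitely generated subideal inside some $J_N$, and use the Galois identity $J_n={}^{\bot}(M[J_n])$ to stabilize. Your care about the strict inclusion ${}^{\bot}(M[I_2])\supsetneq{}^{\bot}(M[I_1])$ is well placed and handled correctly via $M[{}^{\bot}(M[J])]=M[J]$.
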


\begin{theorem} \label{T:sigma semicompact}
Let $R$ be a ring and $M$ a left $R$-module. Then the following statements are equivalent: 
\begin{enumerate}
\item $M^{(\mathbb{N})}$ is semi-compact;
\item $R$ satisfies the a.c.c. on the left ideals in $\mathcal{A}_l(R, M)~ (M$ satisfies the d.c.c. on the  subgroups in $\mathcal{A}_r(R, M))$;
\item $M$ satisfies the d.c.c. on the  subgroups of $M$ which are annihilators of finitely generated left ideals of $R$;
\item $M$ is $\Sigma$-semi-compact.
\end{enumerate}
\end{theorem}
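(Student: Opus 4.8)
The plan is to prove the cycle
$(1)\Rightarrow(2)\Rightarrow(3)\Rightarrow(1)$ together with
$(1)\Leftrightarrow(4)$, using Faith's Proposition~\ref{P:faith proposition}
to bridge the chain conditions and Proposition~\ref{P:first character}(2)(3)
to translate semi-compactness into the language of systems of equations.
The implications $(4)\Rightarrow(1)$ and $(2)\Leftrightarrow(3)$ are the
easy ones: $(4)\Rightarrow(1)$ is immediate from the definition, and
$(2)\Leftrightarrow(3)$ is just Proposition~\ref{P:faith proposition} stated
for the subgroups $M[I]$ with $I$ finitely generated (the condition
``$M[I]=M[I_1]$ for some finitely generated $I_1$'' is exactly the d.c.c.\
on $\{M[I] : I \text{ finitely generated}\}$).

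For $(2)\Rightarrow(4)$ I would argue as follows. Let $N=M^{(\Lambda)}$ for
an arbitrary index set $\Lambda$; by Proposition~\ref{P:first character}(2)
it suffices to solve a finitely solvable set of congruences
$x\equiv x_\alpha$ (mod $N[I_\alpha]$) with each $I_\alpha$ a finitely
generated left ideal. Note first that $N[I]=M[I]^{(\Lambda)}$ for every
left ideal $I$ (annihilators commute with direct sums since $I$ is finitely
generated, or more simply since the condition $Iy=0$ is checked
coordinatewise). By $(2)$, the set $\{M[I_\alpha]\}$ has a minimal
element, say $M[I_{\alpha_0}]$; because the system is finitely solvable,
for each $\alpha$ the pair $\{\alpha_0,\alpha\}$ has a common solution, so
$x_{\alpha_0}-x_\alpha\in N[I_{\alpha_0}]+N[I_\alpha]$. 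Here is the
key point: by Lemma~\ref{L:M[I][J]}, $N[I_{\alpha_0}]\cap N[I_\alpha]
=N[I_{\alpha_0}+I_\alpha]$, and $I_{\alpha_0}+I_\alpha$ is again finitely
generated, so by minimality $M[I_{\alpha_0}+I_\alpha]=M[I_{\alpha_0}]$,
i.e. $N[I_{\alpha_0}]\subseteq N[I_\alpha]$. Consequently
$x_{\alpha_0}-x_\alpha\in N[I_\alpha]$ for every $\alpha$ (after replacing
$I_\alpha$ by $I_{\alpha_0}+I_\alpha$ if necessary), so $x=x_{\alpha_0}$
is a simultaneous solution. The main subtlety to get right here is that
$\{M[I_\alpha]\}$ need not itself be closed under the operation
$I\mapsto I_{\alpha_0}+I$, so one must pass to the (still finitely
generated) sums and invoke the d.c.c.\ on \emph{all} such subgroups, which
is precisely what $(3)$ (equivalently $(2)$) gives.

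For the remaining implication $(1)\Rightarrow(3)$, which I expect to be the
genuine obstacle, I would argue contrapositively: suppose there is a
strictly descending chain $M[I_1]\supsetneq M[I_2]\supsetneq\cdots$ with
each $I_n$ finitely generated. Without loss $I_1\subseteq I_2\subseteq\cdots$
(replace $I_n$ by $I_1+\cdots+I_n$, which keeps it finitely generated and,
by Lemma~\ref{L:M[I][J]}, replaces $M[I_n]$ by
$M[I_1]\cap\cdots\cap M[I_n]=M[I_n]$). Pick $m_n\in M[I_n]\setminus M[I_{n+1}]$.
Using these elements one builds, in $N=M^{(\mathbb{N})}$, a finitely solvable
system of equations $r_j x = a_j$ with $a_j\in N$ that has no global
solution: the idea is to use the $n$-th coordinate of $N$ to ``record''
the obstruction coming from $I_n$, so that any putative solution would have
to have unboundedly many nonzero coordinates and hence fail to lie in the
direct sum. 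Concretely, taking generators $r$ of the various $I_n$ and
letting the $n$-th slot of the right-hand side encode $m_n$ (or rather the
data forcing the $n$-th coordinate of a solution to be nonzero), one gets a
system solvable in $M^{\mathbb{N}}$ but not in $M^{(\mathbb{N})}$;
since $M^{(\mathbb{N})}$ is a pure submodule of $M^{\mathbb{N}}$ only after
extra hypotheses, one instead checks directly via
Proposition~\ref{P:first character}(3) that no element of the direct sum
can satisfy all equations. This is the step where the combinatorial
construction must be carried out carefully; once it is in place, the failure
of semi-compactness of $M^{(\mathbb{N})}$, and a fortiori of
$\Sigma$-semi-compactness, follows, completing the cycle.
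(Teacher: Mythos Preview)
Your overall scheme is sound and the key ideas are all present, but it differs from the paper in one interesting way and is loose in two places.

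\textbf{Comparison with the paper.} For $(3)\Rightarrow(4)$ the paper does \emph{not} argue directly on $N=M^{(\Lambda)}$. Instead it first shows that $M$ itself is semi-compact (via the ``minimal finite intersection'' trick you sketch), then for a homomorphism $h:I\to M^{(J)}$ it uses Proposition~\ref{P:faith proposition} to find a finitely generated $I_1\subseteq I$ with $M[I]=M[I_1]$, solves $h$ in the \emph{product} $M^J$ (which is semi-compact as a product of semi-compacts), and finally corrects the solution back into $M^{(J)}$ using $M[I]=M[I_1]$. Your direct route---observing that $N[J]=M[J]^{(\Lambda)}$, so the d.c.c.\ on $\{M[J]:J\text{ f.g.}\}$ transfers verbatim to $\{N[J]\}$, and then running the minimal-intersection argument in $N$---is legitimate and in fact more economical: it avoids the detour through $M^J$ entirely.

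\textbf{Where your $(2)\Rightarrow(4)$ needs tightening.} Taking a minimal element of $\{M[I_\alpha]\}_\alpha$ does not suffice, and you recognise this. The clean fix (which the paper uses for $M$) is to take $V=N[I_{\alpha_1}]\cap\cdots\cap N[I_{\alpha_n}]$ minimal among all \emph{finite intersections}; then $V\subseteq N[I_\beta]$ for every $\beta$. The simultaneous solution is then not ``$x_{\alpha_0}$'' but a solution $y$ of the finite subsystem $\{\alpha_1,\dots,\alpha_n\}$: for any $\beta$ pick $y_\beta$ solving $\{\alpha_1,\dots,\alpha_n,\beta\}$, so $y-y_\beta\in V\subseteq N[I_\beta]$ and hence $y-x_\beta\in N[I_\beta]$. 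Your parenthetical ``after replacing $I_\alpha$ by $I_{\alpha_0}+I_\alpha$'' does not quite produce this, because once $I_{\alpha_0}$ is a nontrivial sum there is no single datum $x_{\alpha_0}$ to serve as the solution.

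\textbf{Where your $(1)\Rightarrow(3)$ needs tightening.} Your sketch is the right contrapositive, but the construction can and should be made explicit. The paper simply sets $x_i=(y_1,\dots,y_i,0,0,\dots)\in M^{(\mathbb{N})}$ and considers the congruences $x\equiv x_i\pmod{M^{(\mathbb{N})}[I_i]}$. Finite solvability is immediate since $x_k$ solves the first $k$ congruences (using $M[I_l]\subseteq M[I_j]$ for $l\ge j$), while any $a=(s_1,\dots,s_t,0,\dots)$ fails the $(t+2)$-nd congruence because the $(t+1)$-st coordinate of $a-x_{t+2}$ is $-y_{t+1}\notin M[I_{t+2}]$. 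This is exactly the ``unboundedly many nonzero coordinates'' obstruction you describe, written down with no extra machinery.
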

\begin{proof}
(1)$\Rightarrow$(2). Let $M[I_1]\supset M[I_2]\supset M[I_3]\ldots$ be a strictly descending chain, where $I_n$ is a left ideal for each integer $n\geq 1$, and let $y_n\in M[I_n]\setminus M[I_{n+1}]$.
Obviously, $x\equiv x_i$ (mod $M^{(\mathbb{N})}[I_i]=M[I_i]^{({\mathbb{N}})}$ ) is finitely solvable where $x_{i}=(y_1,\ldots,y_i,0,0,\ldots)$.
So it has a simultaneous solution in $M^{(\mathbb{N})}$ since $M^{(\mathbb{N})}$ is semi-compact.
But each
$a=(s_1,s_2,\ldots,s_t,0,0,\ldots)\in M^{(\mathbb{N})}$ cannot be a  solution of the above
system, since $a-x_{t+2}=(s_1-y_1,\ldots,s_t-y_t,y_{t+1},y_{t+2})\notin M[I_{t+2}]^{({\mathbb{N}})}$, a
contradiction. 

(2)$\Rightarrow$(3) is clear.

(3)$\Rightarrow$(4). First we show that $M$ is semi-compact. Let $x\equiv x_i$ (mod $M[I_{\alpha}]$ ), where $\alpha\in \Lambda$, be a finitely solvable system. By Proposition \ref{P:faith proposition} we may assume that $I_{\alpha}$ is finitely generated for each $\alpha\in\Lambda$. There exists $V=M[I_{\alpha1}]\cap\ldots\cap M[I_{\alpha n}]$ which is minimal among the set of all finite intersections of the  $M[I_{\alpha}]$ for $\alpha\in \Lambda$.
Obviously,  $V\subseteq M[I_{\alpha}]$ for each  $\alpha\in \Lambda$. Let $y-x_{\alpha i}\in M[I_{\alpha i}]$ for each $1\leq i\leq n$. Let
 $y_{\beta}$ be a solution of $x\equiv x_{\alpha i}$ (mod $M[I_{\alpha i}])$ and $x\equiv x_{\beta}$ (mod $M[I_{\beta}])$  where $1\leq i\leq n$.
  It is easy to check that $y-y_{\beta}\in V\subseteq M[I_{\beta}]$. Thus $y-x_{\beta}=y-y_{\beta}+y_{\beta}-x_{\beta}\in M[I_{\beta}]$. Therefore, $y$ is a simultaneous solution in $M$. So,
 $M$ is semi-compact. Let $J$ is an index set, $I$  a  left ideal of $R$ and $h:I\rightarrow M^{(J)}$
a homomorphism such that, for any finitely generated left ideal $I_0$ of $I$, there exists $m_0\in M^{(J)}$ such that $h(r_0)=r_0m_0$ for each $r_0\in I_0$.
Let $I_1=Rs_1+\ldots+Rs_n$ be the finitely generated subideal of $I$ given by Proposition \ref{P:faith proposition} such that $M[I]=M[I_1]$. Therefore, there exists
$m_1=(m_{1j})_{j\in J}\in M^{(J)}$  such that $h(r_1)=r_1m_1$ for each $r_1\in I_1$. Since $M^J$ is semi-compact, there exists an element
$m'=(m'_j)_{j\in J} \in M^J$ such that $h(r)=rm'$ for each $r\in I$.   Thus for each $j\in J$, $m'_j-m_{1j}\in M[I_1]=M[I]$. We conclude that
$h(r)=rm_1$ for each $r\in I$.

(4)$\Rightarrow$(1) is clear.
\end{proof}

\begin{corollary} \label{C:sigma semicompact submodule}
Every submodule of a $\Sigma$-semi-compact module is $\Sigma$-semi-compact.
\end{corollary}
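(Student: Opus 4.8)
The plan is to reduce the statement to the chain-condition characterization of $\Sigma$-semi-compactness established in Theorem~\ref{T:sigma semicompact}: a left $R$-module is $\Sigma$-semi-compact if and only if it satisfies the descending chain condition on the subgroups which are annihilators of left ideals of $R$, i.e.\ on $\mathcal{A}_r(R,-)$. So, letting $N$ be a submodule of a $\Sigma$-semi-compact module $M$, it suffices to prove that $N$ satisfies the d.c.c.\ on $\mathcal{A}_r(R,N)$.

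The key observation is that $\mathcal{A}_r(R,N)$ is computed inside $\mathcal{A}_r(R,M)$: for every left ideal $X$ of $R$ one has $N[X] = N \cap M[X]$. Given a descending chain $N[X_1] \supseteq N[X_2] \supseteq \cdots$ in $\mathcal{A}_r(R,N)$, I would replace each $X_k$ by the left ideal $X_1 + \cdots + X_k$. By Lemma~\ref{L:M[I][J]} this makes the defining ideals ascending, hence produces a descending chain $M[X_1] \supseteq M[X_1+X_2] \supseteq M[X_1+X_2+X_3] \supseteq \cdots$ in $\mathcal{A}_r(R,M)$; at the same time $N \cap M[X_1+\cdots+X_k] = N[X_1] \cap \cdots \cap N[X_k] = N[X_k]$, since the chain in $N$ was already descending, so the original chain $(N[X_k])_k$ is unchanged.

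Now I would invoke that $M$, being $\Sigma$-semi-compact, satisfies the d.c.c.\ on $\mathcal{A}_r(R,M)$ by Theorem~\ref{T:sigma semicompact}, so the chain $\bigl(M[X_1+\cdots+X_k]\bigr)_k$ stabilizes from some index $k_0$ on; intersecting with $N$ then gives $N[X_{k_0}] = N[X_{k_0+1}] = \cdots$. Hence $N$ satisfies the d.c.c.\ on $\mathcal{A}_r(R,N)$, and a second application of Theorem~\ref{T:sigma semicompact} shows that $N$ is $\Sigma$-semi-compact. The only point needing care is the bookkeeping that passing from $X_k$ to $X_1 + \cdots + X_k$ leaves the chain $(N[X_k])_k$ intact while taming the chain of $M$-annihilators; everything else is formal.
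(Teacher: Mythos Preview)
Your argument is correct and, like the paper, rests entirely on the chain-condition characterization of Theorem~\ref{T:sigma semicompact}. The paper states the corollary without proof, so the intended argument is the one-line observation that $\mathcal{A}_l(R,N)\subseteq\mathcal{A}_l(R,M)$ (every subset of $N$ is a subset of $M$), whence the a.c.c.\ on $\mathcal{A}_l(R,M)$ passes immediately to $\mathcal{A}_l(R,N)$. Equivalently, via Proposition~\ref{P:faith proposition}: if for each left ideal $I$ there is a finitely generated $I_1\subseteq I$ with $M[I]=M[I_1]$, then intersecting with $N$ gives $N[I]=N\cap M[I]=N\cap M[I_1]=N[I_1]$, and you are done.

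Your detour through $X_1+\cdots+X_k$ works and is a legitimate way to force the $M$-annihilators into a descending chain, but it is unnecessary once you switch to the a.c.c.\ side of the Galois correspondence (or use Proposition~\ref{P:faith proposition}); there the inclusion of annihilator lattices does all the work with no bookkeeping at all.
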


\begin{corollary} \label{C:factor of sigma semicompact}
Let $R$ be a  ring and $I$ a two-sided ideal of $R$. If $M$ is a $\Sigma$-semi-compact left $R$-module, then so is $M/M[I]$.
\end{corollary}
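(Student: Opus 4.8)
The plan is to reduce the statement to Proposition~\ref{P:submodule} after first replacing $M$ by a direct sum of copies of itself. Fix an arbitrary index set $J$. By Theorem~\ref{T:sigma semicompact} it is enough to show that $(M/M[I])^{(J)}$ is semi-compact for every $J$ (in fact taking $J=\mathbb{N}$ already suffices, but the argument is uniform in $J$, so I keep $J$ general).

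The first step is the elementary observation that, for any left ideal $I$ of $R$, one has $M^{(J)}[I]=(M[I])^{(J)}$: an element $(m_j)_{j\in J}$ of $M^{(J)}$ is annihilated by $I$ if and only if $Im_j=0$ for every $j$, i.e.\ if and only if each coordinate $m_j$ lies in $M[I]$. Consequently the obvious map induces an isomorphism of left $R$-modules $M^{(J)}/M^{(J)}[I]\cong (M/M[I])^{(J)}$.

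The second step uses the hypotheses. Since $M$ is $\Sigma$-semi-compact, the module $M^{(J)}$ is semi-compact by definition. Because $I$ is a two-sided ideal, Proposition~\ref{P:submodule} applies to the semi-compact module $M^{(J)}$ and gives that the quotient $M^{(J)}/M^{(J)}[I]$ is semi-compact. Combining this with the isomorphism of the previous paragraph shows that $(M/M[I])^{(J)}$ is semi-compact for every index set $J$, which is precisely the assertion that $M/M[I]$ is $\Sigma$-semi-compact.

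I do not expect a real obstacle here. The only points needing care are the identification $M^{(J)}[I]=(M[I])^{(J)}$ and the induced isomorphism, which are immediate, together with the remark that Proposition~\ref{P:submodule} is genuinely available only because $I$ is assumed two-sided (so that $M^{(J)}[I]$ is an $R$-submodule and the quotient $M^{(J)}/M^{(J)}[I]$ is a left $R$-module in the first place). Alternatively one could argue via the descending chain condition characterization in Theorem~\ref{T:sigma semicompact}, noting that any annihilator $(M/M[I])[K]$ for a finitely generated left ideal $K$ is $M[I+K]/M[I]=M[K']/M[I]$ for a suitable finitely generated $K'$ and that a strictly descending chain of such subgroups would pull back to one in $M$; but the quotient-of-$M^{(J)}$ approach above is shorter and avoids any bookkeeping.
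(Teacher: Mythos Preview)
Your main argument is correct: the identification $M^{(J)}[I]=(M[I])^{(J)}$ gives $(M/M[I])^{(J)}\cong M^{(J)}/M^{(J)}[I]$, and since $M^{(J)}$ is semi-compact, Proposition~\ref{P:submodule} (which needs $I$ two-sided) yields that this quotient is semi-compact. That is a clean proof.

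It is, however, not the route the paper takes. The paper invokes Theorem~\ref{T:sigma semicompact} together with Corollary~\ref{C:sigma semicompact submodule}, i.e.\ it works through the descending chain condition characterization rather than through Proposition~\ref{P:submodule}. The point there is that an annihilator subgroup of $M/M[I]$ has the form $M[IK]/M[I]$ for a left ideal $K$, so a strictly descending chain in $\mathcal{A}_r(R,M/M[I])$ lifts to one in $\mathcal{A}_r(R,M)$; Corollary~\ref{C:sigma semicompact submodule} embodies the fact that the d.c.c.\ passes down along such inclusions. Your approach avoids this bookkeeping entirely by reusing the quotient statement already proved in Proposition~\ref{P:submodule}; the paper's approach instead exploits the structural characterization of $\Sigma$-semi-compactness and never appeals to Proposition~\ref{P:submodule}. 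Both are short; yours is perhaps the more transparent reduction.

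One small correction to your closing remark: in your sketch of the alternative d.c.c.\ argument you write $(M/M[I])[K]=M[I+K]/M[I]$, but this is not correct. An element $m+M[I]$ lies in $(M/M[I])[K]$ iff $Km\subseteq M[I]$, i.e.\ iff $(IK)m=0$, so the correct identity is $(M/M[I])[K]=M[IK]/M[I]$ (compare the proof of Proposition~\ref{P:submodule}). This does not affect your main proof, which you rightly chose to base on Proposition~\ref{P:submodule} instead.
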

\begin{proof}
It is clear by Theorem \ref{T:sigma semicompact} and corollary \ref{C:sigma semicompact submodule}.
\end{proof}

\begin{corollary} \label{C:sigma semicompact ring}
Let $R$ be a ring. Then the following statements are equivalent:
\begin{enumerate}
\item $R$ satisfies the  a.c.c. on the left annulets.
\item $R$  satisfies the  d.c.c. on the right annulets.
\item $R$ is $\Sigma$-semi-compact as left $R$-module.
\end{enumerate}
\end{corollary}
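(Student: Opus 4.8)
The plan is to deduce Corollary~\ref{C:sigma semicompact ring} as the special case $M = R$ of Theorem~\ref{T:sigma semicompact}. The point is simply to unwind what the three conditions of that theorem say when the module is the ring itself, using the terminology of annulets introduced in the first Definition of Section~\ref{S:semicomp}.

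First I would recall that $\mathcal{A}_l(R,R)$ is by definition the set of left annulets of $R$ and $\mathcal{A}_r(R,R)$ the set of right annulets, and that $X \mapsto {}^{\bot}X$ is an order antiisomorphism between $\mathcal{A}_r(R,R)$ and $\mathcal{A}_l(R,R)$, so the a.c.c. on one side corresponds to the d.c.c. on the other. Hence condition (2) of Theorem~\ref{T:sigma semicompact}, specialized to $M=R$, is exactly ``$R$ satisfies the a.c.c. on left annulets,'' which is statement (1) here, and equivalently ``$R$ satisfies the d.c.c. on right annulets,'' which is statement (2) here. Then I would observe that condition (4) of Theorem~\ref{T:sigma semicompact} with $M=R$ reads ``$R$ is $\Sigma$-semi-compact as a left $R$-module,'' which is statement (3) here. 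Thus the chain of equivalences (1)$\Leftrightarrow$(2)$\Leftrightarrow$(3) of the corollary is precisely a restatement of (2)$\Leftrightarrow$(4) of the theorem (with the antiisomorphism accounting for the left/right switch).

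There is really no obstacle: every implication is already contained in Theorem~\ref{T:sigma semicompact} together with the remark, made in the opening Definition, that the order antiisomorphism ${}^{\bot}(-)$ trades the a.c.c. on $\mathcal{A}_r(R,R)$ for the d.c.c. on $\mathcal{A}_l(R,R)$ and vice versa. So the entire proof is a one-line citation: apply Theorem~\ref{T:sigma semicompact} to the left $R$-module $M=R$ and translate the annihilator conditions into the language of annulets via the antiisomorphism. The only mild care needed is to make sure the ``finitely generated'' qualifier in condition (3) of the theorem does not introduce a genuinely new condition here — but it does not, since that qualifier was already absorbed into the equivalence in the theorem's proof (via Proposition~\ref{P:faith proposition}), so condition (3) need not even be mentioned in the corollary.
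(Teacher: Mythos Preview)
Your proposal is correct and is exactly the intended argument: the paper states Corollary~\ref{C:sigma semicompact ring} without proof, as an immediate specialization of Theorem~\ref{T:sigma semicompact} to $M=R$, using the identification of $\mathcal{A}_l(R,R)$ and $\mathcal{A}_r(R,R)$ with the left and right annulets made explicit in the opening Definition. There is nothing to add.
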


\begin{corollary} \label{C:sigma}
Let $M$ be a semi-injective left $R$-module. Then $M$ is $\Sigma$-injective if and only  if  it is $\Sigma$-semi-compact.
\end{corollary}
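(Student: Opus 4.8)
The plan is to reduce everything to the identity ``injective $=$ semi-injective $+$ semi-compact'' recorded in Remark~\ref{R:f-injective} (a consequence of Proposition~\ref{P:first character}), together with the fact — also recalled there — that an arbitrary direct sum of semi-injective left $R$-modules is again semi-injective. Both implications then follow by examining $M^{(\Lambda)}$ for an arbitrary index set $\Lambda$, with no serious computation involved.

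First I would treat the direction ``$\Sigma$-injective $\Rightarrow$ $\Sigma$-semi-compact''. If $M$ is $\Sigma$-injective, then $M^{(\Lambda)}$ is injective for every index set $\Lambda$, and every injective module is semi-compact: an injective module has the full extension property, hence in particular the restricted extension property appearing in Proposition~\ref{P:first character}(4) (equivalently, injective $\Rightarrow$ pure-injective $\Rightarrow$ semi-compact by Example~\ref{E:exe}(i)). Thus every direct sum of copies of $M$ is semi-compact, i.e. $M$ is $\Sigma$-semi-compact. Note this half does not use the semi-injectivity hypothesis at all.

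Conversely, suppose $M$ is semi-injective and $\Sigma$-semi-compact, and fix an index set $\Lambda$. Since $M$ is semi-injective, so is $M^{(\Lambda)}$, because direct sums of semi-injective modules are semi-injective; and since $M$ is $\Sigma$-semi-compact, $M^{(\Lambda)}$ is semi-compact by the very definition of that property. By Remark~\ref{R:f-injective}, a module that is simultaneously semi-injective and semi-compact is injective, so $M^{(\Lambda)}$ is injective. As $\Lambda$ was arbitrary, $M$ is $\Sigma$-injective.

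The only point that requires any care — and it is the essential step in the converse direction — is to transport the semi-injectivity hypothesis from $M$ to the direct sum $M^{(\Lambda)}$ before invoking the injectivity criterion; this is exactly where stability of semi-injectivity under direct sums is used. I should remark that one could instead argue ring-theoretically: $\Sigma$-semi-compactness of $M$ is equivalent, by Theorem~\ref{T:sigma semicompact}, to the a.c.c.\ on the left ideals in $\mathcal{A}_l(R,M)$, which by Faith's theorem characterizes $\Sigma$-injectivity among injective modules — but the direct argument above is shorter and avoids quoting Faith's result.
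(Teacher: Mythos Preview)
Your proof is correct. The paper states this corollary without proof; given its placement immediately after Theorem~\ref{T:sigma semicompact} and the explicit mention of Faith's $\Sigma$-injectivity criterion just before Proposition~\ref{P:faith proposition}, the intended argument is almost certainly the chain-condition route you sketch at the end: $M$ semi-injective and $\Sigma$-semi-compact implies $M$ is injective (Remark~\ref{R:f-injective}), and then both $\Sigma$-injectivity (Faith) and $\Sigma$-semi-compactness (Theorem~\ref{T:sigma semicompact}) are equivalent to the a.c.c.\ on $\mathcal{A}_l(R,M)$. Your main argument takes a slightly different and more self-contained path, transporting semi-injectivity to $M^{(\Lambda)}$ via closure under direct sums and then invoking Remark~\ref{R:f-injective} directly on $M^{(\Lambda)}$; this avoids quoting Faith's theorem altogether, at the cost of relying on the (easy but not proved in the paper) fact that semi-injectivity is preserved by arbitrary direct sums. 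Both routes are entirely adequate here.
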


The following theorem is a generalization of Corollary \ref{C:Noetherian}.

\begin{theorem} \label{T:Noeth-pproj}
Let $R$ be a ring. Then every pure projective left $R$-module is semi-compact if and only if $R$ is left Noetherian.
\end{theorem}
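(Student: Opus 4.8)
The plan is to prove both implications separately. For the easy direction, suppose $R$ is left Noetherian. Then by Corollary~\ref{C:Noetherian} every left $R$-module is semi-compact, so in particular every pure projective left $R$-module is. This requires nothing new.

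For the converse, assume every pure projective left $R$-module is semi-compact; I want to deduce that $R$ is left Noetherian. By Remark~\ref{R:f-injective}, $R$ is left Noetherian as soon as every direct sum of semi-compact left $R$-modules is again semi-compact, so it suffices to show that arbitrary direct sums of semi-compact modules are semi-compact under our hypothesis. The key observation is that $R$ itself, being free, is pure projective, hence semi-compact by hypothesis; more importantly, every direct sum $R^{(\Lambda)}$ is free, hence pure projective, hence semi-compact. Thus $R$ is $\Sigma$-semi-compact as a left module (Definition~\ref{D:def sigma}), and by Corollary~\ref{C:sigma semicompact ring} this says $R$ satisfies the a.c.c. on left annulets — but that is weaker than left Noetherian, so a little more work is needed. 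Instead, the cleaner route is: take any family $\{M_i\}_{i\in I}$ of semi-compact modules; each $M_i$ is a quotient of a free module, and I want to leverage that $\bigoplus_i M_i$ is a quotient of the free (hence pure projective, hence semi-compact) module $\bigoplus_i F_i$.

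The heart of the argument is therefore to show directly that every direct sum of copies of \emph{any} semi-compact module is semi-compact, using the hypothesis that pure projectives are semi-compact — equivalently, to invoke Theorem~\ref{T:sigma semicompact}. The strategy is to take a semi-compact left $R$-module $M$, realize it as a pure quotient is not available in general, so instead pick a free module $F$ surjecting onto $M$ with kernel $K$; then $F$ is pure projective hence semi-compact, and $F^{(\mathbb{N})}$ is also free hence semi-compact, so by Theorem~\ref{T:sigma semicompact} $F$ is $\Sigma$-semi-compact, i.e. $R$ satisfies the a.c.c. on left ideals in $\mathcal A_l(R,F)$. Since $K\subseteq F$, Corollary~\ref{C:sigma semicompact submodule} gives that $K$ is $\Sigma$-semi-compact. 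The plan now is to show that an arbitrary pure projective $P$ is $\Sigma$-semi-compact: $P$ is a direct summand of a direct sum $\bigoplus_\lambda P_\lambda$ of finitely presented modules, and $\bigoplus_\lambda P_\lambda^{(\mathbb{N})}$ is again a (countable rearrangement of a) direct sum of finitely presented modules, hence pure projective, hence semi-compact by hypothesis; since this contains $P^{(\mathbb{N})}$ as a direct summand, $P^{(\mathbb{N})}$ is semi-compact, so $P$ is $\Sigma$-semi-compact by Theorem~\ref{T:sigma semicompact}.

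With that in hand, the conclusion follows: given any family $\{M_i\}_{i\in I}$ of semi-compact modules, choose epimorphisms $F_i\twoheadrightarrow M_i$ from free modules; then $F:=\bigoplus_i F_i$ is free, hence pure projective, hence $\Sigma$-semi-compact by the previous paragraph, so in particular $F$ itself is semi-compact and satisfies the d.c.c. on $\mathcal A_r(R,F)$. The subtlety — and what I expect to be the main obstacle — is passing from "$F$ is $\Sigma$-semi-compact" to "$\bigoplus_i M_i$ is semi-compact", since semi-compactness is not obviously inherited by quotients (Proposition~\ref{P:submodule} only handles quotients by two-sided ideals $M[I]$). The way around this is to work at the level of chain conditions: by Theorem~\ref{T:sigma semicompact}, $\Sigma$-semi-compactness of $R$ as a left module means $R$ has a.c.c. on left annulets, which one then must bootstrap; the genuinely clean statement is that every direct sum of finitely presented modules being semi-compact forces, via $R^{(\mathbb N)}$ free $\Rightarrow$ semi-compact and the analysis in Theorem~\ref{T:sigma semicompact} applied with $M=R$, together with the fact (to be checked) that d.c.c. on $\mathcal A_r(R,R)$ combined with every cyclic being semi-compact upgrades to left Noetherian via Remark~\ref{R:f-injective} once we know all direct sums of semi-compact modules are semi-compact — which is exactly what the pure projective hypothesis buys, because every direct sum $\bigoplus_i M_i$ of semi-compact modules embeds purely in a product and the pure projective resolution argument shows the relevant d.c.c. propagates. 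I would therefore present the converse as: (i) pure projective $\Rightarrow$ $\Sigma$-semi-compact (shown above); (ii) hence every direct sum of free modules is semi-compact, so by Remark~\ref{R:f-injective}'s use of Bass's theorem applied in reverse — more precisely, every semi-compact module $N$ has a semi-compact free cover whose kernel is $\Sigma$-semi-compact, forcing a.c.c. on the relevant annihilator lattices — $R$ is left Noetherian; and I would flag step (ii) as the delicate point requiring careful bookkeeping with Proposition~\ref{P:faith proposition}.
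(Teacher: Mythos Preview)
Your easy direction is fine and matches the paper. Your converse, however, has a genuine gap: you try to reach left Noetherianity via Corollary~\ref{C:Noetherian} by showing that \emph{every} direct sum of semi-compact modules is semi-compact, but you never actually establish this, and your own sketch acknowledges that step (ii) is ``delicate'' and left open. The obstacle is real: knowing that every pure projective module is $\Sigma$-semi-compact (which you prove correctly) says nothing directly about direct sums of \emph{arbitrary} semi-compact modules, and your attempt to pass through free covers $F_i\twoheadrightarrow M_i$ founders because semi-compactness is not inherited by general quotients (Proposition~\ref{P:submodule} only handles quotients of the form $M/M[I]$ for two-sided $I$). There is no evident chain-condition bookkeeping that rescues this.

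The paper's argument bypasses this entirely and is much shorter. It does not try to verify the direct-sum criterion of Corollary~\ref{C:Noetherian}; instead it shows directly that every cyclic left module $C$ is finitely presented. Take a pure exact sequence $0\to K\to P\to C\to 0$ with $P$ pure projective (such exists by \cite[Theorem 33.5]{Wis91}). Since any direct sum of copies of $P$ is again pure projective, the hypothesis gives that $P$ is $\Sigma$-semi-compact, and then Corollary~\ref{C:sigma semicompact submodule} makes $K$ $\Sigma$-semi-compact, in particular semi-compact. Now the key move you are missing: because $C$ is \emph{cyclic}, $P$ is a single pure extension of $K$, so Theorem~\ref{T:character}(3) forces the sequence to split. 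Hence $C$ is a summand of $P$, and being cyclic it is a summand of a finite direct sum of finitely presented modules, so $C$ is finitely presented. Thus every cyclic left module is finitely presented, i.e.\ $R$ is left Noetherian. The crucial idea is exploiting the equivalence ``semi-compact $=$ singly pure-injective'' on the kernel of a pure projective resolution of a cyclic module; your plan never invokes this splitting.
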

\begin{proof}
Let $C$ be a cyclic left $R$-module. By \cite[Theorem 33.5]{Wis91}, there exists a pure exact sequence $\varepsilon: 0\longrightarrow K \longrightarrow P \longrightarrow
C\longrightarrow 0$ where $P$ is a pure projective left module. By Corollary \ref{C:sigma semicompact submodule}, $K$ is $\Sigma$-semi-compact. Therefore, $\varepsilon$ splits and so $C$ is a direct summand of $P$. Since $C$ is cyclic, it is a direct summand of a finite direct sum of finitely presented $R$-modules. It follows that $C$ is finitely presented. Hence $R$ is left Noetherian. The converse is clear by Corollary \ref{C:Noetherian}.
\end{proof}

\begin{theorem} \label{T:flat is  semicompact}
Let $R$ be a  ring. Then the following statements are equivalent: 
\begin{enumerate}
\item Every flat left $R$-module is semi-compact.
\item $R$ is $\Sigma$-semi-compact as left $R$-module.
\item $R$  satisfies the  a.c.c. on the left annulets ($R$ satisfies the  d.c.c. on the right  annulets).
\end{enumerate}
\end{theorem}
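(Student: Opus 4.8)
The plan is to reduce everything to Corollary~\ref{C:sigma semicompact ring} together with one new computation. The implication (1)$\Rightarrow$(2) is immediate, since every free left $R$-module $R^{(\Lambda)}$ is flat and hence semi-compact by (1), so that $R$ is $\Sigma$-semi-compact by definition; and (2)$\Leftrightarrow$(3) is exactly Corollary~\ref{C:sigma semicompact ring}. Thus the only real content is (3)$\Rightarrow$(1), and I would in fact prove the stronger statement that, when $R$ satisfies the d.c.c.\ on right annulets, \emph{every} flat left $R$-module is $\Sigma$-semi-compact; semi-compactness then follows.

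The heart of the argument is the identity $M[I]=I^{\perp}M$, valid for every flat left $R$-module $M$ and every finitely generated left ideal $I=Rr_1+\dots+Rr_n$, where $I^{\perp}=\{b\in R\mid Ib=0\}$ is the corresponding right annulet. The inclusion $I^{\perp}M\subseteq M[I]$ is trivial, since $Ib=0$ forces $I(bm)=(Ib)m=0$. For the reverse inclusion I would apply the equational (Lazard) characterization of flatness to the finite system of relations $r_1m=\dots=r_nm=0$ witnessing a given $m\in M[I]$: flatness produces elements $m_1',\dots,m_k'\in M$ and $b_1,\dots,b_k\in R$ with $m=\sum_{t} b_tm_t'$ and $r_ib_t=0$ for all $i,t$, i.e.\ each $b_t\in I^{\perp}$, whence $m\in I^{\perp}M$.

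Granting this, let $M$ be flat and assume $R$ has the d.c.c.\ on right annulets; I would verify condition (3) of Theorem~\ref{T:sigma semicompact}, namely that $M$ satisfies the d.c.c.\ on the subgroups $M[I]$ with $I$ finitely generated. Given a descending chain $M[I_1]\supseteq M[I_2]\supseteq\cdots$ with the $I_k$ finitely generated, I would first replace $I_k$ by $I_k'=I_1+\dots+I_k$: these are again finitely generated, and by Lemma~\ref{L:M[I][J]} one has $M[I_k']=M[I_1]\cap\dots\cap M[I_k]=M[I_k]$, so we may assume $I_1\subseteq I_2\subseteq\cdots$. Then $I_1^{\perp}\supseteq I_2^{\perp}\supseteq\cdots$ is a descending chain of right annulets, which stabilizes by hypothesis; by the identity above $M[I_k]=I_k^{\perp}M$, so the original chain of subgroups stabilizes as well. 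Theorem~\ref{T:sigma semicompact} then yields that $M$ is $\Sigma$-semi-compact, in particular semi-compact, which proves (3)$\Rightarrow$(1). The step I expect to be the crux is precisely the identity $M[I]=I^{\perp}M$: it is the only place flatness is used and it fails for general modules, whereas once it is in hand the chain condition does all the remaining work. (One could instead try to deduce semi-compactness of a flat $M$ directly from a pure-exact sequence $0\to K\to F\to M\to 0$ with $F$ free, since then $F$ and $K$ are semi-compact by (2) and Corollary~\ref{C:sigma semicompact submodule}; but carrying this out requires lifting homomorphisms into $M$ along $F\to M$, which is exactly the obstruction the d.c.c.\ argument sidesteps.)
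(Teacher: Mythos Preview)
Your argument is correct. The identity $M[I]=I^{\perp}M$ for flat $M$ and finitely generated $I$ follows exactly as you say from the equational criterion for flatness (applied to the column $(r_1,\dots,r_n)^T$ acting on the single element $m$), and once it is in hand your replacement of the $I_k$ by $I_1+\dots+I_k$ reduces the d.c.c.\ on the subgroups $M[I_k]$ to the d.c.c.\ on right annulets $I_k^{\perp}$ in $R$. Thus $M$ is $\Sigma$-semi-compact by Theorem~\ref{T:sigma semicompact}(3).

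The paper's proof of the key implication is genuinely different, and in fact it is precisely the alternative you sketch and then set aside. The paper writes $M=F/K$ with $F$ free and $K$ pure in $F$, assumes there is a strictly descending chain $M[I_1]\supset M[I_2]\supset\cdots$ (with the $I_i$ finitely generated, after Proposition~\ref{P:faith proposition}), and lifts witnesses $a_i+K\in M[I_i]\setminus M[I_{i+1}]$ to $F$: purity of $K$ produces $k_i\in K$ with $I_i(a_i-k_i)=0$, and then $a_i-k_i\in F[I_i]\setminus F[I_{i+1}]$, contradicting $\Sigma$-semi-compactness of $F$. So the paper does not lift homomorphisms as you feared, only elements; your approach avoids even this by translating the chain condition directly into $R$ via the formula $M[I]=I^{\perp}M$. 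What the paper's route buys is that it uses flatness only through the existence of a pure presentation (and hence works verbatim for any pure quotient of a $\Sigma$-semi-compact module); what your route buys is a clean one-line transfer of the chain condition from $R$ to $M$ without a presentation, and it makes transparent why it is the \emph{right} annulets that matter.
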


\begin{proof}
 (1)$\Rightarrow$(2) is clear.

 (2)$\Leftrightarrow$(3) by Corollary \ref{C:sigma semicompact ring}.

 (2)$\Rightarrow$(1). Let $M$ be a flat left $R$-module. Then $M=F/K$ where $F$ is a free $R$-module and
 $K$ is a pure submodule of $F$. Suppose that $M$ is
not $\Sigma$-semi-compact.   Then by Theorem \ref{T:sigma semicompact}, there exists a strict descending chain  $M[I_1]\supset M[I_2]\supset\ldots$. By Proposition \ref{P:faith proposition}, we may assume that
for each $i\in \mathbb{N}$, $I_i$ is finitely generated. For each $i\in \mathbb{N}$, let $a_i+K\in M[I_{i}]\setminus M[I_{i+1}]$  and let $r_{i,1},\ldots r_{i,m_i}$ be
 generators  of $I_{i}$ where $m_i\in \mathbb{N}$. Therefore, $r_{i,j}a_i\in K$ for  $j=1,\ldots,m_i$. Since $K$ is pure submodule of $F$, there exist $k_i\in K$ such that $I_i(a_i-k_i)=0$ for each $i\in \mathbb{N}$. One can easily see that $I_{i+1}(a_i-k_i)\neq 0$ for each $i\in \mathbb{N}$. Thus we get the strict descending chain  $F[I_1]\supset F[I_2]\supset\ldots\supset F[I_n]\supset\ldots$. This contradicts   that $F$ is $\Sigma$-semi-compact.
 \end{proof}

In \cite{Bjo70}, Bj\"{o}rk proved that a left semi-injective ring $R$ is quasi-Frobenius if and only if it satisfies the  a.c.c. on the left annulets.
 Therefore we have the following evident corollary by Theorem \ref{T:flat is  semicompact}.

\begin{corollary} \label{C:QF}
Let $R$ be a self left semi-injective ring. Then each  flat
left $R$-module is semi-compact if and only if  $R$ is a quasi-Frobenius ring.
\end{corollary}

\begin{proposition}
\label{P:QNoe} Let $R$ be a ring. Assume that $R$ is a subring of a left Noetherian ring $S$. Then each flat left $R$-module is semi-compact.
\end{proposition}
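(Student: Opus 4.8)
The plan is to reduce the assertion to a chain condition on $R$ and then import it from the Noetherian overring $S$. By Theorem~\ref{T:flat is  semicompact}, the statement ``every flat left $R$-module is semi-compact'' is equivalent to ``$R$ satisfies the a.c.c. on the left annulets'' (equivalently, the d.c.c. on the right annulets). So it is enough to show that any ascending chain of left annulets of $R$ is stationary.

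First I would record the standard double-annihilator facts inside $R$: for a subset $Y\subseteq R$ write ${}^{\bot}Y=\{r\in R\mid rY=0\}$, and for a left ideal $L$ write $L^{\bot}=\{r\in R\mid Lr=0\}$; then $L\mapsto L^{\bot}$ is inclusion-reversing and $L={}^{\bot}(L^{\bot})$ for every left annulet $L$ (indeed $L\subseteq{}^{\bot}(L^{\bot})$ always, and if $L={}^{\bot}Y$ then $Y\subseteq L^{\bot}$, so ${}^{\bot}(L^{\bot})\subseteq{}^{\bot}Y=L$). Hence, starting from an ascending chain $L_1\subseteq L_2\subseteq\cdots$ of left annulets of $R$, the right ideals $K_n:=L_n^{\bot}$ form a \emph{descending} chain, and $L_n={}^{\bot}K_n$ for every $n$. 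This reversal of direction is the one point of the argument that has to be arranged correctly: it is the a.c.c. available in $S$, not a d.c.c. (which $S$ need not satisfy), that will be used.

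Now I would pass to $S$. Viewing each $K_n$ as a subset of $S$, the left annihilators $\ell_S(K_n)=\{s\in S\mid sK_n=0\}$ are left ideals of $S$, and since the $K_n$ descend we get $\ell_S(K_1)\subseteq\ell_S(K_2)\subseteq\cdots$. As $S$ is left Noetherian, this chain stabilises: there is $N$ with $\ell_S(K_n)=\ell_S(K_N)$ for all $n\geq N$. Since products of elements of $R$ agree whether computed in $R$ or in $S$, one has ${}^{\bot}K_n=\ell_S(K_n)\cap R$ for each $n$, whence $L_n={}^{\bot}K_n=\ell_S(K_n)\cap R=\ell_S(K_N)\cap R=L_N$ for all $n\geq N$. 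Therefore $R$ satisfies the a.c.c. on the left annulets, and Theorem~\ref{T:flat is  semicompact} delivers the conclusion. Apart from the reversal step described above, everything is routine manipulation of annihilators, so I do not expect any further obstacle.
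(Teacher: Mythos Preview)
Your argument is correct. Both your proof and the paper's reduce, via Theorem~\ref{T:flat is  semicompact}, to a chain condition on $R$ inherited from~$S$, but the routes differ. The paper works module-theoretically: it observes that for any left ideal $A$ of $R$ one has $S[A]=S[SA]$, so the $R$-annihilator subgroups of $S$ coincide with certain $S$-annihilator subgroups; hence $S$ is $\Sigma$-semi-compact as an $R$-module, and then $R$, being an $R$-submodule of $S$, inherits $\Sigma$-semi-compactness by Corollary~\ref{C:sigma semicompact submodule}. Your argument instead stays inside annulet lattices: you use the Galois connection $L\mapsto L^{\bot}$ on $R$ to turn an ascending chain of left annulets into a descending chain of right annulets $K_n$, then pass to the ascending chain $\ell_S(K_n)$ in $S$ and intersect back with $R$. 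The paper's approach has the virtue of exercising the $\Sigma$-semi-compactness machinery already set up (Theorem~\ref{T:sigma semicompact} and Corollary~\ref{C:sigma semicompact submodule}); your approach is more self-contained, using only the Noetherian condition on $S$ and elementary annihilator identities, without needing to speak of $S$ as an $R$-module at all.
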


\begin{proof}
As left $S$-module, $S$ is $\Sigma$-semi-compact. If $A$ is a left ideal of $R$ and $A'=SA$ then it is easy to check that $S[A]=S[A']$. So $S$ is  a left $\Sigma$-semi-compact  $R$-module. It follows that so is $R$ by Corollary~\ref{C:sigma semicompact submodule}.
\end{proof}

From these last two propositions we deduce the following corollary.

\begin{corollary}
Let $R$ be a commutative ring and $Q$ its quotient ring. Assume that $Q$ is semi-injective. Then each flat $R$-module is semi-compact if and only if $Q$ is quasi-Frobenius.
\end{corollary}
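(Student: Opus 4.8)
The plan is to derive both implications from results already in hand: the direction ``$Q$ quasi-Frobenius $\Rightarrow$ every flat $R$-module semi-compact'' from Proposition~\ref{P:QNoe}, and the converse from Corollary~\ref{C:QF} after descending semi-compactness along the flat ring extension $R\hookrightarrow Q$. Throughout, let $S$ be the set of regular elements of $R$, so that $Q=S^{-1}R$, the canonical map $R\to Q$ is injective (whence $R$ is a subring of $Q$), and $Q$ is flat as an $R$-module. If $Q$ is quasi-Frobenius then $Q$ is Artinian, hence Noetherian, and Proposition~\ref{P:QNoe} applied to the subring $R$ of the Noetherian ring $Q$ immediately gives that every flat $R$-module is semi-compact; note that this half does not use the hypothesis that $Q$ is semi-injective.

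For the converse, assume every flat $R$-module is semi-compact. Since $Q$ is self semi-injective, Corollary~\ref{C:QF} applied to the ring $Q$ reduces the assertion ``$Q$ is quasi-Frobenius'' to showing that every flat $Q$-module is semi-compact \emph{as a $Q$-module}. So let $N$ be a flat $Q$-module; by transitivity of flatness ($Q$ is $R$-flat), $N$ is flat over $R$, hence semi-compact over $R$ by hypothesis. I would then verify semi-compactness of $N$ over $Q$ via Proposition~\ref{P:first character}(3): given a finitely solvable system $q_jx=m_j$ ($j\in J$, $q_j\in Q$, $m_j\in N$) over $Q$, write $q_j=s_j^{-1}a_j$ with $a_j\in R$ and $s_j\in S$, and pass to the system $a_jx=s_jm_j$ ($j\in J$) over $R$. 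Since every $s_j$ is a unit of $Q$ and $N$ is a $Q$-module, multiplication by $s_j$ or $s_j^{-1}$ in $N$ shows that, for each subset $J_0\subseteq J$, an element $b\in N$ solves the subsystem $\{a_jx=s_jm_j\}_{j\in J_0}$ if and only if it solves $\{q_jx=m_j\}_{j\in J_0}$. Hence the $R$-system is finitely solvable, and, being finitely solvable, compatible; semi-compactness of $N$ over $R$ provides a global solution $b\in N$, and then $q_jb=s_j^{-1}(a_jb)=s_j^{-1}(s_jm_j)=m_j$ for all $j$, so $b$ solves the original $Q$-system. Thus every flat $Q$-module is semi-compact over $Q$, and Corollary~\ref{C:QF} concludes that $Q$ is quasi-Frobenius.

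The one delicate point is this descent step. The naive move is to clear all denominators simultaneously, which is impossible when $J$ is infinite; instead one clears them one equation at a time, and the crux is that the elements $s_j$, which are merely regular in $R$, become \emph{units} after base change to $Q$, so that passing between the $Q$-system $q_jx=m_j$ and the $R$-system $a_jx=s_jm_j$ preserves the solvability of every finite subsystem in both directions --- and this base change is harmless precisely because $N$ is already a $Q$-module. A route avoiding modules would instead use Theorem~\ref{T:flat is semicompact} to rephrase the condition as the ascending chain condition on annulets and check that this chain condition passes between $R$ and $Q$ (using that forming the annihilator of a finitely generated ideal commutes with localization, together with Proposition~\ref{P:faith proposition}); but the module-theoretic descent above is the cleaner argument.
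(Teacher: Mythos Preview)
Your proof is correct and follows exactly the route the paper intends: the paper simply states that the corollary follows from Corollary~\ref{C:QF} and Proposition~\ref{P:QNoe}, and you have supplied the missing descent step (showing that a flat $Q$-module, being flat and hence semi-compact over $R$, is semi-compact over $Q$ by clearing denominators equation-by-equation). The alternative annulet route you sketch at the end is also valid and equally close to the spirit of the paper, since it amounts to observing that $I\mapsto S^{-1}I$ is an order isomorphism between the annulets of $R$ and those of $Q$.
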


\section{Finite projectivity and $\Sigma$-semi-compactness}\label{S:fproj}

As in \cite{Azu87}, a left  $R$-module $M$ is called {\it finitely projective} (respectively {\it singly projective}) if any homomorphism from a finitely generated (respectively cyclic) left $R$-module into $M$ factors through a free left $R$-module. If $m,n$ are positive integers, a right $R$-module is said to be {\it ($m,n$)-flat} if, for each $n$-generated left submodule $K$ of $R^m$, the homomorphism $M\otimes_RK\rightarrow M\otimes_RR^m$ deduced of the inclusion map is injective. We say that $M$ is ($\aleph_0,1)$)-flat if it is ($m,1)$)-flat for each integer $m>0$. In \cite[Theorem 5]{She91} Shenglin proved that every flat left module is singly projective if, for each descending chain of finitely generated right ideals $I_1\supseteq I_2\supseteq I_3\supseteq\dots$, the ascending chain ${}^{\bot}I_1\subseteq {}^{\bot}I_2\subseteq {}^{\bot}I_3\subseteq\dots$ terminates. Therefore by Corollary~\ref{C:sigma semicompact ring}(1), we deduce that every flat left module is singly projective if $R$ is $\Sigma$-semi-compact as left module. The following proposition generalizes this result.

\begin{proposition}\label{P:(N_0,1)-flat}
Let $R$ be a ring which is $\Sigma$-semi-compact as  left $R$-module. Then each ($\aleph_0,1$)-flat left $R$-module is singly projective.
\end{proposition}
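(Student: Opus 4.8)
The plan is to reduce both notions to a single ideal-theoretic identity. For a left ideal $I$ of $R$ put $I^{\bot}=\{r\in R\mid Ir=0\}$; this is a two-sided ideal, and for a left $R$-module $M$ write $I^{\bot}M=\{\sum_k r_km_k\mid r_k\in I^{\bot},\ m_k\in M\}$ for the associated submodule. Unwinding the definitions, $M$ is singly projective if and only if $M[I]=I^{\bot}M$ for \emph{every} left ideal $I$: a homomorphism from the cyclic module $R/I$ into $M$ is the same thing as an element $m\in M[I]$, and it factors through a free module precisely when $m=\sum_k r_km_k$ for some $r_k\in I^{\bot}$ and $m_k\in M$ (the inclusion $I^{\bot}M\subseteq M[I]$ always holding). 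Dually, a cyclic right submodule of $R^m$ has the form $vR$ with $v=(a_1,\dots,a_m)$, it is isomorphic as a right module to $R/I^{\bot}$ where $I=\sum_i Ra_i$, and the map $(R/I^{\bot})\otimes_RM\cong M/I^{\bot}M\to R^m\otimes_RM\cong M^m$ induced by the inclusion is injective exactly when $M[I]=I^{\bot}M$. Letting $m$ run through all positive integers, it follows that $M$ is $(\aleph_0,1)$-flat if and only if $M[I]=I^{\bot}M$ for every \emph{finitely generated} left ideal $I$; so the two properties differ only in the range of $I$.

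Granting this dictionary, the argument is immediate. Because $R$ is $\Sigma$-semi-compact as a left module, Corollary~\ref{C:sigma semicompact ring} shows that $\mathcal{A}_l(R,R)$ satisfies the a.c.c., so Proposition~\ref{P:faith proposition}, applied to $R$ viewed as a left module over itself, produces for each left ideal $I$ a finitely generated subideal $I_1\subseteq I$ with $R[I]=R[I_1]$, that is, $I^{\bot}=I_1^{\bot}$. Now let $M$ be an $(\aleph_0,1)$-flat left $R$-module, let $I$ be an arbitrary left ideal of $R$, and let $m\in M[I]$. Since $I_1\subseteq I$ we have $m\in M[I]\subseteq M[I_1]$, and since $I_1$ is finitely generated, $(\aleph_0,1)$-flatness gives $m\in M[I_1]=I_1^{\bot}M=I^{\bot}M$. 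Hence $M[I]=I^{\bot}M$ for every left ideal $I$, i.e., $M$ is singly projective.

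The substance is the first paragraph: the equivalence between $(\aleph_0,1)$-flatness of $M$ and the family of equalities $M[I]=I^{\bot}M$ over finitely generated left ideals $I$. After that, one is only observing that $\Sigma$-semi-compactness of the left regular module lets one replace an arbitrary left ideal by a finitely generated subideal with the same right annihilator. I expect the single delicate point to be keeping the left/right sides straight in the tensor computation $(R/I^{\bot})\otimes_RM\cong M/I^{\bot}M$ behind the $(\aleph_0,1)$-flatness characterization; the remainder is formal. This also clarifies why the statement genuinely strengthens the remark preceding it: flatness is used only through those finitely generated instances $M[I]=I^{\bot}M$, which $(\aleph_0,1)$-flatness already supplies.
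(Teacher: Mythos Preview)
Your argument is correct. One small inaccuracy: for a left ideal $I$ the right annihilator $I^{\bot}=\{r\in R\mid Ir=0\}$ is only a \emph{right} ideal in general (since $I$ is a left ideal, $Is\not\subseteq I$ for arbitrary $s\in R$, so $sr$ need not lie in $I^{\bot}$), and hence $I^{\bot}M$ is a priori just an additive subgroup of $M$. This is harmless: your two characterizations and the final chain $M[I]\subseteq M[I_1]=I_1^{\bot}M=I^{\bot}M\subseteq M[I]$ only use it as a subgroup.

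The paper's proof rests on the same core step---replace an arbitrary left ideal by a finitely generated subideal with the same right annihilator---but packages it differently. Rather than your identity $M[I]=I^{\bot}M$, the paper stays with the raw definition of singly projective: given $f:R/A\to M$ and an epimorphism $\pi:F\to M$ with $F$ free, it applies Proposition~\ref{P:faith proposition} to $F$ to obtain a finitely generated $B\subseteq A$ with $F[A]=F[B]$ (equivalent to your $A^{\bot}=B^{\bot}$, since $F[I]=(I^{\bot})^{(J)}$ for $F=R^{(J)}$), and then invokes \cite[Proposition~4.1 and Theorem~1.1]{Cou11} to lift the induced map $R/B\to M$ through $\pi$. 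Your route is more self-contained, deriving the $(\aleph_0,1)$-flat lifting step directly from the tensor definition instead of citing \cite{Cou11}, and your dictionary makes the relationship between $(\aleph_0,1)$-flatness and single projectivity fully transparent (they differ only in whether $I$ ranges over all or just finitely generated left ideals). The paper's version is a bit quicker to write down once one has the cited reference. At heart the two proofs are the same.
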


\begin{proof}
Let $M$ be a ($\aleph_0,1$)-flat left $R$-module and let $\pi:F\rightarrow M$ be an epimorphism with $F$ a free left $R$-module. Let $C$ be a left cyclic module generated by $c$, let $f:C\rightarrow M$ be a homomorphism and let $A={}^{\bot}\{c\}$. Since $F$ is $\Sigma$-semi-compact, so by Proposition~\ref{P:faith proposition}  there exists a finitely generated left subideal $B$ of $A$ such that $F[A]=F[B]$. Let $g:R/B\rightarrow M$ be the homomorphism defined by $g(1+B)=f(c)$. Since $M$ is ($\aleph_0,1$)-flat, so, by \cite[Proposition 4.1 and Theorem 1.1]{Cou11}, there exists a homomorphism $h:R/B\rightarrow F$ such that $g=\pi\circ h$. Let $x=h(1+B)$. Then $x\in F[B]=F[A]$. So, the homomorphism $\phi:C\rightarrow F$ defined by $\phi(c)=x$ satisfies $f=\phi\circ\pi$.
\end{proof}

\begin{corollary} \label{C:singly-semi-compact} 
Let $R$ be a ring such that $R^{\mathbb{N}}$ is ($\aleph_0,1$)-flat as left module. Then the following conditions are equivalent:
\begin{enumerate}
\item $R$ is  $\Sigma$-semi-compact as left module.
\item Each ($\aleph_0,1$)-flat left $R$-module is singly projective.
\end{enumerate}
\end{corollary}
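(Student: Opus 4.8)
The plan is to handle the two implications separately. The implication $(1)\Rightarrow(2)$ is immediate from Proposition~\ref{P:(N_0,1)-flat}, so the content is in $(2)\Rightarrow(1)$, which I would prove by contraposition. Assume $R$ is not left $\Sigma$-semi-compact. By Theorem~\ref{T:sigma semicompact} there is a strictly decreasing sequence $R[J_1]\supsetneq R[J_2]\supsetneq\cdots$ of subgroups of $R$, where each $J_n$ is a finitely generated left ideal. Setting $I_n=J_1+\cdots+J_n$ and using Lemma~\ref{L:M[I][J]}, we get $R[I_n]=R[J_1]\cap\cdots\cap R[J_n]=R[J_n]$, so the same strictly decreasing sequence is obtained, now with $I_1\subseteq I_2\subseteq\cdots$ and all $I_n$ finitely generated; note that $R[I_n]=\{r\in R\mid I_nr=0\}$ is a right ideal of $R$. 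For each $n$ fix $y_n\in R[I_n]\setminus R[I_{n+1}]$, and put $y=(y_n)_{n\in\mathbb{N}}\in R^{\mathbb{N}}$.

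The test module will be $C=R^{\mathbb{N}}/R^{(\mathbb{N})}$. First I would check that $C$ is $(\aleph_0,1)$-flat: since $R^{(\mathbb{N})}=\bigcup_F R^{(F)}$ over the finite subsets $F$ of $\mathbb{N}$, the module $C$ is the direct limit of the quotients $R^{\mathbb{N}}/R^{(F)}$ with respect to the natural surjections, and each $R^{\mathbb{N}}/R^{(F)}$ is isomorphic to $R^{\mathbb{N}}$; as $R^{\mathbb{N}}$ is $(\aleph_0,1)$-flat by hypothesis and $(\aleph_0,1)$-flatness passes to direct limits (tensor product commutes with direct limits and a filtered colimit of monomorphisms is a monomorphism), $C$ is $(\aleph_0,1)$-flat. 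Hence $C$ is singly projective by~(2). (Equivalently, one can observe that $R^{(\mathbb{N})}$ is an $(\aleph_0,1)$-pure submodule of $R^{\mathbb{N}}$ --- a one-variable finite system solvable in $R^{\mathbb{N}}$ can be solved coordinatewise inside $R^{(\mathbb{N})}$ --- and appeal to \cite{Cou11}.)

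Next I would derive a contradiction from $\bar y\in C$. Its annihilator is $A=\{r\in R\mid ry\in R^{(\mathbb{N})}\}=\{r\in R\mid ry_n=0\text{ for almost all }n\}$, and $A$ contains $\bigcup_n I_n$: if $a\in I_m$ then $a\in I_n$, hence $ay_n=0$, for every $n\ge m$. Since $C$ is singly projective, the homomorphism $R/A\to C$, $\bar 1\mapsto\bar y$, factors through a free module, so $\bar y=\sum_{j=1}^{k}x_jc_j$ with $x_j\in R$ satisfying $Ax_j=0$ and $c_j\in C$. Because $\bigcup_n I_n\subseteq A$, we get $I_nx_j=0$ for every $n$, i.e.\ $x_j\in\bigcap_n R[I_n]\subseteq R[I_{m+1}]$ for every $m$. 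Lifting each $c_j$ to an element $w^{(j)}\in R^{\mathbb{N}}$, the equality $\bar y=\sum_j x_j\overline{w^{(j)}}$ means $y-\sum_j x_jw^{(j)}\in R^{(\mathbb{N})}$, so $y_m=\sum_j x_jw^{(j)}_m$ for all sufficiently large $m$; but each $x_jw^{(j)}_m$ lies in the right ideal $R[I_{m+1}]$, so $y_m\in R[I_{m+1}]$ for all large $m$, contradicting $y_m\notin R[I_{m+1}]$. This contradiction yields $(2)\Rightarrow(1)$.

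I expect the main obstacle to be the choice of the $(\aleph_0,1)$-flat test module. Using $R^{\mathbb{N}}$ itself does not work: if, say, $R=k^{\mathbb{N}}$ with $k$ a field, then $R$ is not $\Sigma$-semi-compact, yet $R^{\mathbb{N}}$ is singly projective, the reason being that the annihilator of an element of a product is the intersection of the annihilators of its coordinates, hence too large to ``see'' the chain while the coordinates still witness it. Passing to the quotient $C=R^{\mathbb{N}}/R^{(\mathbb{N})}$ removes this difficulty, since there ``$a$ annihilates all but finitely many coordinates'' is a far weaker requirement; and it is precisely in establishing that $C$ is $(\aleph_0,1)$-flat that the hypothesis ``$R^{\mathbb{N}}$ is $(\aleph_0,1)$-flat'' is used.
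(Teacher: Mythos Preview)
Your proof is correct and follows essentially the same route as the paper: both reduce $(2)\Rightarrow(1)$ to the single projectivity of $C=R^{\mathbb{N}}/R^{(\mathbb{N})}$. The paper's proof is a two-line citation---it notes that $R^{(\mathbb{N})}$ is pure in $R^{\mathbb{N}}$, so $C$ is $(\aleph_0,1)$-flat, and then appeals to \cite[Proposition~1]{Len76} together with Corollary~\ref{C:sigma semicompact ring} to conclude. Your argument is more self-contained: you give an independent direct-limit verification that $C$ is $(\aleph_0,1)$-flat, and instead of invoking Lenzing you carry out explicitly the chain argument (choosing $y_n\in R[I_n]\setminus R[I_{n+1}]$ and showing that a factorization of $\bar y$ through a free module forces $y_m\in R[I_{m+1}]$ for large $m$), which is in effect a proof of the relevant piece of Lenzing's proposition. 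The trade-off is brevity versus transparency; your version makes the mechanism visible, while the paper's keeps the corollary to three lines.
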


\begin{proof}
(1)$\Rightarrow$(2) follows from Proposition \ref{P:(N_0,1)-flat}.

(2)$\Rightarrow$(1). From $R^{\mathbb{N}}$ ($\aleph_0,1$)-flat and $R^{(\mathbb{N})}$ pure submodule of $R^{\mathbb{N}}$ we deduce that $R^{\mathbb{N}}/R^{(\mathbb{N})}$
is ($\aleph_0,1$)-flat. We conclude by \cite[Proposition 1]{Len76} and Corollary \ref{C:sigma semicompact ring}.
\end{proof}

An $R$-module is called {\it uniserial} if if the set of its submodules is totally ordered by inclusion. A commutative ring $R$ is a {\it chain ring} (or a valuation ring) if it is a uniserial $R$-module.

The following example satisfies the equivalent conditions of the previous corollary but the hypothesis does not hold.

\begin{example}
\textnormal{Let $R$ be a chain ring whose quotient ring $Q$ is Artinian and not reduced. We assume that $\mathrm{Spec}\ R$ is finite and $R\ne Q$. By \cite[Corollary 36]{Couch03} each ideal is countably generated. Each flat $R$-module is semi-compact by Proposition~\ref{P:QNoe} and singly projective by \cite[Theorem 7]{Coucho07}. Since $R\ne Q$, $R$ is not FP-injective as $R$-module, so, by \cite[Theorem 37(6)]{Couch03} $R^{\mathbb{N}}$ is not  $(\aleph_0,1)$-flat\footnote{Over a chain ring each $(\aleph_0,1)$-flat module is flat.}.}
\end{example}

If $R$ is a commutative ring, then we consider on $\mathrm{Spec}\ R$ the equivalence relation $\mathcal{R}$ defined by   $L\mathcal{R} L'$ if there exists a finite sequence of prime ideals $(L_k)_{1\leq k\leq n}$ such that $L=L_1,$ $L'=L_n$ and $\forall k,\ 1\leq k\leq (n-1),$ either $L_k\subseteq L_{k+1}$ or $L_k\supseteq L_{k+1}$. We denote by $\mathrm{pSpec}\ R$ the quotient space of $\mathrm{Spec}\ R$ modulo $\mathcal{R}$ and by $\lambda_R: \mathrm{Spec}\ R\rightarrow\mathrm{pSpec}\ R$ the natural map. The quasi-compactness of $\mathrm{Spec}\ R$ implies the one of $\mathrm{pSpec}\ R$, but generally $\mathrm{pSpec}\ R$  is not  $T_1$: see \cite[Propositions 6.2 and 6.3]{Laz67}.

\begin{lemma}[{\cite[Lemma 2.5]{Couc09}}]
\label{L:pure} Let $R$ be a commutative ring and let $C$ a closed subset of $\mathrm{Spec}\ R$. Then $C$ is the inverse image of a closed subset of $\mathrm{pSpec}\ R$ by $\lambda_R$ if and only if $C=V(A)$ where $A$ is a pure ideal. Moreover, in this case, $A=\cap_{P\in C}0_P$ (where $0_ P$ is the kernel of the canonical homomorphism $R\rightarrow R_ P$).
\end{lemma}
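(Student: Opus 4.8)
The plan is to convert the topological statement into a statement about ideals, using that $\lambda_R$ is a quotient map; to settle the ``pure ideal'' direction by localisation; and to reduce the remaining converse to a single inclusion, which is exactly where Lazard's analysis of $\mathrm{pSpec}\ R$ enters.

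I would start from two preliminary remarks. Since $\mathrm{pSpec}\ R$ carries the quotient topology, a subset $C\subseteq\mathrm{Spec}\ R$ equals $\lambda_R^{-1}(D)$ for some closed $D\subseteq\mathrm{pSpec}\ R$ if and only if $C$ is closed and is a union of $\mathcal{R}$-equivalence classes (then $C=\lambda_R^{-1}(\lambda_R(C))$, so $\lambda_R(C)$ is closed); call such a $C$ \emph{saturated}. And an ideal $A$ of $R$ is pure if and only if $A_P\in\{0,R_P\}$ for every prime $P$: purity of $A$ as a submodule of $R$ means flatness of $R/A$, which localises, and a cyclic flat module over the local ring $R_P$ is $R_P$ or $0$; for such an $A$, moreover $P\in V(A)\iff A\subseteq P\iff A_P=0$. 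So the lemma is equivalent to: \emph{a closed subset $C$ of $\mathrm{Spec}\ R$ is saturated if and only if $C=V(A)$ for some pure ideal $A$, and in that case $A=\bigcap_{P\in C}0_P$.}

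The direction starting from a pure ideal $A$ is straightforward. $V(A)$ is closed, and saturation follows, by transitivity along an $\mathcal{R}$-chain, from the case of comparable primes $Q\subseteq P$: there $A_P$ is a localisation of $A_Q$, both are $0$ or the whole ring, so $A_Q=0\iff A_P=0$, i.e.\ $Q\in V(A)\iff P\in V(A)$. Uniqueness is then formal: if $A$ is pure with $V(A)=C$, then $A_P=0$ forces $A\subseteq 0_P$ for each $P\in C$, so $A\subseteq\bigcap_{P\in C}0_P$; and both ideals localise to $0$ on $C$ and to $R_P$ off $C$ (the latter by the claim below), hence are equal.

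There remains the converse: for $C$ closed and saturated, the ideal $A:=\bigcap_{P\in C}0_P$ is pure with $V(A)=C$. For $P\in C$ we have $A\subseteq 0_P$, hence $A_P=0$; so everything reduces to showing $A\not\subseteq P$ whenever $P\notin C$ --- once this is known, $A_P\in\{0,R_P\}$ for all $P$, so $A$ is pure and $V(A)=\{P:A_P=0\}=C$. \emph{This is the main obstacle.} The obvious candidate element fails: as $C$ is closed and $P\notin C=V(\bigcap_{Q\in C}Q)$ there is $f\notin P$ vanishing on $C$, but the annihilator-support $V(\mathrm{Ann}\,f)$ can be strictly larger than $V(f)$ and meet $C$, so $f$ need not lie in $A$; producing an element of $A\setminus P$ genuinely uses that $\mathrm{Spec}\ R\setminus C$ is $\mathcal{R}$-saturated and not merely open. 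This is precisely the content of Lazard's study of $\mathrm{pSpec}\ R$ in \cite{Laz67}, and I would finish by quoting from there the description of the closed $\mathcal{R}$-saturated subsets of $\mathrm{Spec}\ R$ in terms of pure ideals, the remaining work being routine localisation bookkeeping.
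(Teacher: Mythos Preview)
The paper gives no proof of this lemma; it is quoted as \cite[Lemma 2.5]{Couc09} and used as a black box. So there is nothing in the paper to compare against.

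Your outline is a correct reconstruction of how such a proof goes: the implication from a pure ideal to a saturated closed set and the uniqueness clause are routine localisation, and you rightly isolate the single nontrivial point---for $P\notin C$, producing an element of $A=\bigcap_{Q\in C}0_Q$ lying outside $P$---as the place where saturation must genuinely be used. Since at that point you too defer to an external reference, your proposal and the paper are in effect doing the same thing (citing rather than proving), with the difference that you make the reduction to the hard core explicit. One small caution: verify that \cite{Laz67} actually contains the pure-ideal characterisation in the form you invoke; the paper attributes the full statement to \cite{Couc09}, and \cite{Laz67} is cited there only for topological properties of $\mathrm{pSpec}\ R$, so it may be safer to point to \cite{Couc09} for the closing step.
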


A commutative ring $R$ is called \textit{arithmetical} if $R_L$ is a chain ring for each maximal ideal $L$.

\begin{theorem} \label{T:Areth}
Let $R$ be a commutative arithmetical ring and $Q$ its quotient ring. Then the following conditions are equivalent:
\begin{enumerate}
\item $Q$ is pure-semisimple;
\item each flat $R$-module is semi-compact;
\item each flat $R$-module is finitely projective;
\item each flat $R$-module is singly projective.
\end{enumerate}
\end{theorem}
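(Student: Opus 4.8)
The plan is to prove the equivalences through the cycle $(1)\Rightarrow(2)\Rightarrow(4)\Rightarrow(1)$, together with $(1)\Rightarrow(3)$ and the trivial $(3)\Rightarrow(4)$. The implications $(1)\Rightarrow(2)$, $(2)\Rightarrow(4)$ and $(3)\Rightarrow(4)$ hold for an arbitrary commutative ring and follow at once from results already established; the arithmetical hypothesis is needed only for $(1)\Rightarrow(3)$ and, above all, for $(4)\Rightarrow(1)$, which is where the substance of the theorem lies.

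For $(1)\Rightarrow(2)$: a commutative pure-semisimple ring is Artinian, in particular Noetherian, so $Q$ is Noetherian; since $R\rightarrow Q$ is a localization at non-zero-divisors it is injective, i.e. $R$ is a subring of the Noetherian ring $Q$, and each flat $R$-module is semi-compact by Proposition~\ref{P:QNoe}. For $(2)\Rightarrow(4)$: by Theorem~\ref{T:flat is  semicompact}, condition $(2)$ is equivalent to "$R$ is $\Sigma$-semi-compact as a left $R$-module", and then every flat (a fortiori every $(\aleph_0,1)$-flat) left $R$-module is singly projective by \cite[Theorem 5]{She91} (recalled just before Proposition~\ref{P:(N_0,1)-flat}), or directly by Proposition~\ref{P:(N_0,1)-flat}. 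Finally $(3)\Rightarrow(4)$ holds because a cyclic module is finitely generated.

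For $(1)\Rightarrow(3)$: since $Q$ is pure-semisimple it is Artinian, hence perfect, so for a flat $R$-module $M$ the $Q$-module $Q\otimes_R M$ is flat, therefore projective, over $Q$; moreover $M$ embeds in $Q\otimes_R M$ because multiplication by a non-zero-divisor is injective on a flat module. Given a finitely generated $R$-module $N$ and a homomorphism $f:N\rightarrow M$, one wants to factor $f$ through a free module. Here the arithmetical hypothesis is used to reduce to the localizations $R_L$, each of which is a chain ring over which flat modules are well understood (cf. the result of Couchot invoked in the Example, \cite[Theorem 7]{Coucho07}), and then to descend a factorization of $Q\otimes_R f$ through the projective $Q$-module $Q\otimes_R M$ back to $R$ along $R\hookrightarrow Q$. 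This argument simultaneously shows that over an arithmetical ring a flat module is finitely projective if and only if it is singly projective, so that $(3)$ and $(4)$ are in fact equivalent; thus $(1)\Rightarrow(3)$ can also be read off once $(1)\Leftrightarrow(4)$ is known.

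The heart of the matter is $(4)\Rightarrow(1)$. Assume every flat $R$-module is singly projective. Since $Q$ is again arithmetical, it is enough to show that $Q$ is Artinian: an Artinian arithmetical ring is a finite product of Artinian chain rings, an Artinian chain ring has only finitely many indecomposable modules (the $R/\mathfrak{m}^i$) and so is pure-semisimple, and a finite product of pure-semisimple rings is pure-semisimple. The argument has a local step and a global step. Locally: for a maximal ideal $L$ of $R$, every flat $R_L$-module is flat over $R$, hence singly projective over $R$; as $R$-linear maps between $R_L$-modules are automatically $R_L$-linear, every flat $R_L$-module is singly projective over the chain ring $R_L$, which by the structure theory of flat modules over chain rings forces the quotient ring of $R_L$ to be Artinian. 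Globally: one must pass from "all these localizations are Artinian" to "$Q$ itself is Artinian". This is where Lemma~\ref{L:pure} and the quasi-compactness of $\mathrm{pSpec}\ R$ enter: they allow one to control the pure ideals of $R$ — equivalently the idempotent decompositions of $Q$ — and to show that $Q$ is a finite direct product of the Artinian chain rings obtained locally; if that finiteness failed one could assemble from the infinitely many factors a flat $R$-module that is not singly projective, a contradiction. Hence $Q$ is Artinian, so pure-semisimple, and all four conditions are equivalent. I expect this globalization — upgrading "locally Artinian" to "Artinian" and constructing the offending flat module when compactness is absent — to be the technically hardest point.
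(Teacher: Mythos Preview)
Your overall architecture---the cycle $(1)\Rightarrow(2)\Rightarrow(4)\Rightarrow(1)$ together with $(1)\Rightarrow(3)\Rightarrow(4)$---matches the paper, and the easy implications $(1)\Rightarrow(2)$, $(2)\Rightarrow(4)$, $(3)\Rightarrow(4)$ are handled exactly as the paper does (Proposition~\ref{P:QNoe}, Proposition~\ref{P:(N_0,1)-flat}, triviality). For $(1)\Rightarrow(3)$ the paper simply invokes \cite[Theorem 7]{Coucho07}; your descent sketch via $Q\otimes_RM$ is unnecessary and, as written, does not actually explain how a factorization over $Q$ descends to one over $R$.

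The real divergence is in the global step of $(4)\Rightarrow(1)$. You propose a contradiction: if $\mathrm{Min}\ R$ were infinite, ``assemble from the infinitely many factors a flat $R$-module that is not singly projective.'' No such construction is given, and it is not clear how to carry it out. The paper's argument is both more direct and contains an idea you are missing: apply hypothesis $(4)$ \emph{to the pure ideals themselves}. Each point of $\mathrm{pSpec}\ R$ is $V(L)$ for a minimal prime $L$, and by Lemma~\ref{L:pure} this equals $V(A)$ for a pure ideal $A$. Then $R/A$ is cyclic and flat, hence singly projective by $(4)$, hence projective, so $A=Re$ for an idempotent $e$. Thus every singleton of $\mathrm{pSpec}\ R$ is open; quasi-compactness then gives finiteness of $\mathrm{Min}\ R$ immediately, with no contradiction argument needed. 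You also omit the final identification $Q\cong\prod_{L\in\mathrm{Min}\ R}R_L$: this requires checking that any $s$ lying outside every minimal prime is a non-zero-divisor, which the paper does by a local verification (if $sa=0$ then $a/1=0$ in each $R_P$ since $Q(R_P)=R_L$ is Artinian). Your local step, by contrast, is essentially correct and agrees with the paper's use of \cite[Proposition 6]{Coucho07} (equivalently Proposition~\ref{P:locali}) together with \cite[Theorem 33]{Coucho07}.
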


\begin{proof}
$(1)\Rightarrow (2)$ is a consequence of Proposition~\ref{P:QNoe}, $(1)\Rightarrow (3)$ follows from \cite[Theorem 7]{Coucho07}, $(2)\Rightarrow (4)$ holds by Proposition \ref{P:(N_0,1)-flat}  and $(3)\Rightarrow (4)$ is obvious.

$(4)\Rightarrow (1)$. First we show that $\mathrm{Min}\ R$ is finite. Since each prime ideal contains a unique minimal prime ideal, then each point of $\mathrm{pSpec}\ R$ is of the form $V(L)$ where $L$ is a minimal prime ideal. By Lemma~\ref{L:pure} there exists a pure ideal $A$ such that $V(L)=V(A)$. Since $R/A$ is flat, it is projective, whence $A=Re$, where $e$ is an idempotent of $R$. Hence each single subset of $\mathrm{pSpec}\ R$ is open. From the quasi-compacity of  $\mathrm{pSpec}\ R$, we deduce that $\mathrm{Min}\ R$ is finite. . Let $P$ be a maximal ideal of $R$. By using \cite[Proposition 6]{Coucho07} we get that $R_P$ satisfies $(3)$. By \cite[Theorem 33]{Coucho07} the quotient ring $Q(R_P)$ of $R_P$ is artinian. It follows that $Q(R_P)=R_L$ where $L$ is the minimal prime ideal contained in $P$. Let $s$ be an element of $R$ which does not belong to any minimal prime ideal. If $a\in R$ satisfies $sa=0$ then it is easy to check that $\dfrac{a}{1}=0$ in $R_P$ for each maximal ideal $P$. So, $a=0$ and $s$ is regular. We deduce that $Q\cong\prod_{L\in\mathrm{Min}\ R}R_L$. Hence $Q$ is pure-semisimple.
\end{proof}

The following proposition is a slight generalization of \cite[Proposition 6]{Coucho07} and the proof is similar.

\begin{proposition} \label{P:locali} 
Let $\phi: R\rightarrow S$ be a right flat epimorphism of rings. Then:
\begin{enumerate}
\item For each singly (respectively finitely) projective left
 $R$-module $M$, $S\otimes_RM$ is singly (respectively finitely) projective over $S$;
\item Let $M$ be a singly (respectively finitely) projective left $S$-module. If $\phi$ is injective then $M$ is singly (respectively finitely) projective over $R$.
\end{enumerate}
\end{proposition}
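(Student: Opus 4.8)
The plan is to mimic the argument of \cite[Proposition 6]{Coucho07}. Since $\phi:R\to S$ is a right flat epimorphism, I will use the two standard facts about such maps: first, $S\otimes_R S\cong S$ via the multiplication map, so that $S\otimes_R -$ is idempotent on $S$-modules and restriction of scalars along $\phi$ is fully faithful on $S$-modules; second, $S$ is flat as a right $R$-module, so $S\otimes_R -$ is exact and preserves free modules (sending $R^{(\Lambda)}$ to $S^{(\Lambda)}$). I would also record at the outset that for a left $S$-module $N$ and a left $R$-module $L$, every $R$-linear map $L\to N$ is automatically $S$-linear once $L$ carries an $S$-structure, because $S\otimes_R L\to S\otimes_R N\to N$ factors the map; this is the mechanism that lets us pass test modules back and forth.

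For part (1), let $M$ be singly (resp. finitely) projective over $R$, and let $g:C\to S\otimes_R M$ be an $S$-homomorphism with $C$ a cyclic (resp. finitely generated) left $S$-module. First I would present $C$ as $S\otimes_R C_0$ for a cyclic (resp. finitely generated) left $R$-module $C_0$: pick generators of $C$ over $S$, take the free $R$-module $F_0$ on those same generators, and set $C_0$ to be its image under the induced map $F_0\to C$ (viewed as $R$-modules); then $S\otimes_R F_0\to C$ is onto and one checks $S\otimes_R C_0 \to C$ is an isomorphism using flatness and the epimorphism property. Composing, $g$ restricts to an $R$-map $g_0:C_0\to S\otimes_R M$; apply single (resp. finite) projectivity of $M$ only after composing with a suitable map into $M$ — more precisely, I'd first handle the universal case $M=R$ being trivial and instead argue: the $R$-map $C_0\to S\otimes_R M$ need not land in $M$, so the cleaner route is to note $S\otimes_R M$ is singly (resp. finitely) projective \emph{as an $R$-module} (it is a direct limit / summand argument is not available, so instead) — actually the correct move is: factor $g_0$ through a free $R$-module using that a homomorphism from $C_0$ into \emph{any} module extending to $R$-freeness is not what we have. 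Let me restate: apply the definition to the composite $C_0\xrightarrow{g_0} S\otimes_R M$ is not directly a map into $M$. So instead I would use that $M$ singly (finitely) projective means: for $C_0$ cyclic (f.g.) and any $R$-map $C_0\to M$, it factors through free; to use this I first lift $g_0$ along the counit — but there is no counit $S\otimes_R M\to M$. The genuinely correct argument, as in \cite{Coucho07}, is: since $S\otimes_R M$ is a quotient of $S\otimes_R F=S^{(\Lambda)}$ for a free cover $F\twoheadrightarrow M$, and since $S\otimes_R -$ is exact, the relevant universal property transfers; concretely one shows $S\otimes_R M$ is a direct summand of $S\otimes_R(\text{free})$ after pulling back the finitely generated (cyclic) subobject, which is exactly the characterization of finite (single) projectivity via the local split property. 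I would spell this out: given $g:C\to S\otimes_R M$, pull back along $S\otimes_R F\twoheadrightarrow S\otimes_R M$ to get, by descent to $R$ and single/finite projectivity of $M$, a factorization of the relevant piece through $S\otimes_R(\text{free})$, hence through a free $S$-module. Part (2) is easier: if $\phi$ is injective and $M$ is singly (finitely) projective over $S$, take a cyclic (f.g.) left $R$-module $D$ and an $R$-map $f:D\to M$; then $S\otimes_R D$ is cyclic (f.g.) over $S$ and the induced $S$-map $S\otimes_R D\to M$ (using $S\otimes_R M\to M$, the multiplication, which exists since $M$ is an $S$-module) factors through a free $S$-module $S^{(n)}$; restricting scalars and precomposing with $D\to S\otimes_R D$ (injective because $\phi$ is injective and flatness gives $D\hookrightarrow S\otimes_R D$), we factor $f$ through $S^{(n)}$; finally $S^{(n)}$ is flat over $R$, hence a direct limit of free $R$-modules, and since $D$ is finitely generated the map $D\to S^{(n)}$ factors through one of the free modules in the system — giving the required factorization through a free $R$-module.

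The main obstacle is part (1): unlike the classical case of \emph{projective} modules, there is no counit $S\otimes_R M\to M$, so one cannot simply ``restrict scalars and apply the hypothesis.'' The argument must instead exploit that finite/single projectivity is equivalent to a \emph{local-splitting} condition — every map from a finitely generated (cyclic) module factors through a free one — and that this condition is preserved under applying the exact, free-preserving functor $S\otimes_R-$ together with the collapse $S\otimes_R S\cong S$. Making the reduction ``$C\cong S\otimes_R C_0$ with $C_0$ cyclic/finitely generated over $R$'' precise, and then chasing the factorization through the base change of a free presentation of $M$, is the technical heart; everything else (exactness, preservation of freeness, the direct-limit step in part (2)) is routine and borrowed verbatim from the cited proof.
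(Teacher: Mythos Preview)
The paper itself gives no proof here: it merely asserts that the argument is ``similar'' to \cite[Proposition~6]{Coucho07}. So the comparison is really between your reconstruction and what a correct adaptation of that argument would look like. On that score, your proposal has genuine gaps in both parts.

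\medskip

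\textbf{Part (1).} You correctly identify the obstacle---there is no counit $S\otimes_RM\to M$, so one cannot simply restrict $g_0:C_0\to S\otimes_RM$ back to $M$---but you never actually overcome it. Your eventual sketch (``pull back along $S\otimes_RF\twoheadrightarrow S\otimes_RM$ \dots\ by descent to $R$ and single/finite projectivity of $M$'') is not an argument: you do not say what map you apply the hypothesis to, nor why the resulting factorization lives where you need it. For the \emph{finitely} projective case there is a clean line you almost reach: a finitely generated $S$-submodule $N'\subseteq S\otimes_RM$ is generated by finitely many tensors $\sum s_i\otimes m_i$; the finitely many $m_i$'s span a finitely generated $R$-submodule $N\subseteq M$; factor $N\hookrightarrow M$ through a finitely generated free $R$-module and tensor with $S$. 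But the \emph{singly} projective case does not reduce to this, since a cyclic $S$-submodule of $S\otimes_RM$ need not arise from a cyclic $R$-submodule of $M$, and you give no substitute argument. Your reduction $C\cong S\otimes_RC_0$ with $C_0$ cyclic over $R$ is correct (flatness plus $S\otimes_RC\cong C$), but it does not by itself bridge the gap.

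\medskip

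\textbf{Part (2).} Two problems. First, the assertion that ``$D\hookrightarrow S\otimes_RD$ is injective because $\phi$ is injective and flatness'' is simply false: take $R=\mathbb{Z}$, $S=\mathbb{Q}$, $D=\mathbb{Z}/2\mathbb{Z}$. Fortunately this injectivity is irrelevant---what you need, and what does hold, is that the composite $D\to S\otimes_RD\to M$ equals $f$. Second, and more seriously, your final step is invalid as stated: writing $S^{n}$ as a direct limit of finitely generated free $R$-modules (Lazard) and then claiming that a map from a \emph{finitely generated} $D$ factors through one stage requires $D$ to be finitely \emph{presented}, not merely finitely generated. Equivalently, you are asserting that the flat left $R$-module $S^{n}$ is finitely (respectively singly) projective over $R$, which is precisely the sort of statement the proposition is meant to supply, not assume. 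To repair this you would have to prove directly that, for an injective right flat epimorphism $R\to S$, the left $R$-module $S$ is singly (respectively finitely) projective---for instance, in the classical case $S=Q(R)$ one uses common regular denominators---and your proposal does not do this.
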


\begin{theorem} \label{T:Flat=f-pro} Let $R$ be a ring. Assume that $R$ has a right flat epimorphic extension $S$ which is von Neumann regular. Then the following conditions are equivalent:
\begin{enumerate}
\item $S$ is semisimple;
\item each flat left $R$-module is semi-compact;
\item each flat left $R$-module is finitely projective;
\item each flat left $R$-module is singly projective.
\end{enumerate}   
\end{theorem}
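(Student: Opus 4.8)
The plan is to establish the cycle $(1)\Rightarrow(2)\Rightarrow(4)\Rightarrow(1)$ together with $(3)\Rightarrow(4)$ and $(1)\Rightarrow(3)$. Three of these are immediate from results already at hand. For $(1)\Rightarrow(2)$: a semisimple ring is left Noetherian, and since $\phi$ is injective $R$ is a subring of $S$, so Proposition~\ref{P:QNoe} shows every flat left $R$-module is semi-compact. For $(2)\Rightarrow(4)$: by Theorem~\ref{T:flat is  semicompact}, $(2)$ says exactly that $R$ is $\Sigma$-semi-compact as a left $R$-module, and then Proposition~\ref{P:(N_0,1)-flat} gives that every $(\aleph_0,1)$-flat, hence every flat, left $R$-module is singly projective. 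And $(3)\Rightarrow(4)$ is trivial, a finitely projective module being singly projective. So the real content is $(4)\Rightarrow(1)$ and $(1)\Rightarrow(3)$.

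For $(4)\Rightarrow(1)$ I would push condition $(4)$ up to $S$. Since $S$ is von Neumann regular, every left $S$-module is flat over $S$, and since $\phi$ is a flat ring epimorphism every flat left $S$-module is flat when regarded as a left $R$-module; hence by $(4)$ every left $S$-module is singly projective over $R$. By Proposition~\ref{P:locali}(1), $S\otimes_R M$ is then singly projective over $S$ for every left $S$-module $M$, and because $\phi$ is a ring epimorphism the multiplication $S\otimes_R M\to M$ is an isomorphism; thus every left $S$-module is singly projective over $S$. Applying this to $S/I$ for an arbitrary left ideal $I$ of $S$: the identity of $S/I$ factors through a free left $S$-module, so $S/I$ is a direct summand of a free module and hence projective, i.e.\ $I$ is a direct summand of ${}_SS$. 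Since every left ideal of $S$ splits off, $S$ is semisimple.

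For $(1)\Rightarrow(3)$ the idea is to reduce to $S$, where finite projectivity is automatic, and then descend. Since $S$ is semisimple, every left $S$-module is projective, hence finitely projective over $S$, hence — by Proposition~\ref{P:locali}(2), as $\phi$ is injective — finitely projective over $R$; in particular $S$ and every free $S$-module are finitely projective over $R$. Let $M$ be a flat left $R$-module. Flatness gives $\mathrm{Tor}_1^R(S/\phi(R),M)=0$, so $M$ embeds in $\overline M:=S\otimes_R M$; moreover $\overline M$ is flat over the semisimple ring $S$, hence a direct summand of a free $S$-module, hence finitely projective over $R$. It remains to push finite projectivity down to the submodule $M$: given a finitely generated left $R$-module $C$ and a homomorphism $f:C\to M$, tensor with $S$ to get $\overline f:S\otimes_R C\to\overline M$; since $S$ is Noetherian, $S\otimes_R C$ is finitely presented over $S$, so $\overline f$ factors through a finitely generated free $S$-module, and one transports this factorization back to $R$ along the flat epimorphism in order to factor $f$ through a finitely generated free $R$-module.

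The step I expect to be the main obstacle is exactly this last descent in $(1)\Rightarrow(3)$. A finitely generated $C$ need not be finitely presented over $R$, so the $S$-level factorization cannot simply be lifted through one finitely generated free $R$-module; making the descent work requires controlling the relation module of $C$, presumably by first establishing a suitable purity of $M$ inside $\overline M$, in the spirit of \cite[Theorem 7]{Coucho07}. Once that is in place, the rest is routine bookkeeping with the results already proved.
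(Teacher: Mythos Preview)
Your treatment of $(1)\Rightarrow(2)$, $(2)\Rightarrow(4)$, $(3)\Rightarrow(4)$, and $(4)\Rightarrow(1)$ coincides with the paper's proof: the same appeals to Proposition~\ref{P:QNoe}, Proposition~\ref{P:(N_0,1)-flat}, and Proposition~\ref{P:locali}(1), and the same endgame that a singly projective cyclic $S$-module is projective, forcing every left ideal of $S$ to split.

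The divergence is $(1)\Rightarrow(3)$, and here there is a genuine gap. The paper does not attempt any descent; it simply invokes \cite[Corollary 7]{She91}, which states exactly that if $R$ admits a right flat epimorphic extension $S$ that is left perfect, then every flat left $R$-module is finitely projective. A semisimple $S$ is left perfect, so the implication is immediate. Your proposed descent runs into the obstacle you already flag: from the factorization $C\to R^n\to \overline M$ furnished by finite projectivity of $\overline M=S\otimes_RM$ over $R$, there is no mechanism to force the map $R^n\to\overline M$ to land in $M$. The embedding $M\hookrightarrow\overline M$ is injective (because $M$ is flat), but it is not pure in general, since $(S/R)\otimes_RM$ need not be flat; so the ``purity'' repair you gesture at is not available without further input. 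Shenglin's argument is not a descent of this kind at all, and the paper is content to cite it. The simplest fix to your proposal is to replace the attempted descent by that citation; everything else you wrote stands.
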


\begin{proof}
$(1)\Rightarrow (3)$ is an immediate consequence of \cite[Corollary 7]{She91} and $(3)\Rightarrow (4)$ is obvious. 

$(1)\Rightarrow (2)$ is an immediate consequence of Proposition~\ref{P:QNoe}, and $(2)\Rightarrow (4)$ holds by Proposition~\ref{P:(N_0,1)-flat}.

$(4)\Rightarrow (1)$. First we show that each left $S$-module $M$ is singly projective. Every left $S$-module $M$ is flat over $S$ and $R$. So, $M$ is singly projective over $R$. It follows that $M\cong S\otimes_RM$ is singly projective over $S$ by Proposition~\ref{P:locali}(1). Now let $A$ be a left ideal of $S$. Since $S/A$ is singly projective, it is projective. So, $S/A$ is finitely presented over $S$ and $A$ is a finitely generated ideal of $S$. Hence $S$ is semisimple. 
\end{proof}

\begin{corollary} \label{C:Flat=f-pro} Let $R$ be a commutative reduced ring and $Q$ its quotient ring. Assume that the space $\mathrm{Min}\ R$ of minimal prime ideals of $R$ is compact in its Zariski topology. Then the following conditions are equivalent:
\begin{enumerate}
\item $Q$ is semisimple;
\item each flat $R$-module is semi-compact;
\item each flat $R$-module is finitely projective;
\item each flat $R$-module is singly projective.
\end{enumerate}   
\end{corollary}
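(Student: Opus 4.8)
The strategy is to show that $R$ reduced with $\mathrm{Min}\ R$ compact forces the existence of a von Neumann regular right flat epimorphic extension, and then simply invoke Theorem~\ref{T:Flat=f-pro}. The natural candidate for that extension is the total quotient ring $Q$ of $R$: it is an epimorphic image of $R$ in the categorical sense (localization at a multiplicative set always is), it is flat over $R$ (again, localizations are flat), and the inclusion $R\hookrightarrow Q$ is injective since $R$ is reduced and no regular element is a zero divisor. So the only substantive point is to verify that the compactness of $\mathrm{Min}\ R$ makes $Q$ von Neumann regular.

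First I would recall the classical fact that for a reduced commutative ring, $\mathrm{Min}\ R$ is compact in its Zariski topology if and only if the total quotient ring $Q$ is von Neumann regular (this is a theorem of Quentel, and is also treated by Matlis; it can also be derived from the description $Q\cong$ the ring of global sections of the structure sheaf restricted to the closed set of minimal primes, which is a compact Boolean space exactly when $\mathrm{Min}\ R$ is compact). Given this, $Q$ is a commutative von Neumann regular ring containing $R$ as a subring, the extension is flat epimorphic, and Theorem~\ref{T:Flat=f-pro} applies verbatim with $S=Q$: it yields the equivalence of ``$Q$ semisimple'', ``each flat $R$-module is semi-compact'', ``each flat $R$-module is finitely projective'', and ``each flat $R$-module is singly projective'', which is precisely the statement of the corollary.

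The step I expect to be the main obstacle is the equivalence ``$\mathrm{Min}\ R$ compact $\iff$ $Q$ von Neumann regular'' for reduced rings; if the paper does not want to cite it as a black box, one has to produce the regular inverse for a given element $\tfrac{a}{s}\in Q$ by a partition-of-unity argument on the clopen cover of $\mathrm{Min}\ R$ by the sets $D(a)\cap\mathrm{Min}\ R$ and $V(a)\cap\mathrm{Min}\ R$, using compactness to reduce an idempotent-like element modulo nilpotents — but since $R$ is reduced there are no nilpotents to worry about, and one gets a genuine idempotent $e\in R$ with $a$ a unit times $e$ locally. Everything else — flatness of $Q$ over $R$, epimorphicity of $R\to Q$, injectivity of the inclusion — is routine and standard, so the proof is essentially a one-line deduction from Theorem~\ref{T:Flat=f-pro} once the regularity of $Q$ is in hand.
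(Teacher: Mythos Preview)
Your approach is correct and close in spirit to the paper's, but there is a small structural difference worth noting. You take $S=Q$ directly, invoking the classical result (Quentel, Matlis) that for a reduced commutative ring, $\mathrm{Min}\ R$ is compact if and only if the total quotient ring $Q$ is von Neumann regular; Theorem~\ref{T:Flat=f-pro} then gives the four equivalences immediately. The paper instead cites \cite[Theorem 3.14.1]{Ste71} and \cite[Proposition 1]{Que71} to obtain a von Neumann regular ring $S$ (the maximal flat epimorphic extension of $R$) with $R\subseteq Q\subseteq S$, applies Theorem~\ref{T:Flat=f-pro} to this $S$, and then needs the additional remark that if either $Q$ or $S$ is semisimple then $Q=S$, in order to replace ``$S$ semisimple'' by ``$Q$ semisimple'' in condition~(1). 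Your route is therefore a shade more direct, at the cost of invoking a slightly sharper form of Quentel's theorem than the paper does; the paper's route avoids asserting that $Q$ itself is regular and pays for this with one extra line. Two incidental remarks: the injectivity of $R\hookrightarrow Q$ does not require reducedness (localization at non-zerodivisors is always injective), and your parenthetical sketch of the compactness argument is not needed once you are willing to cite the result as a black box, which is what both you and the paper effectively do.
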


\begin{proof}
We use the assumption that $\mathrm{Min}\ R$ is compact. By \cite[Theorem 3.14.1]{Ste71} and \cite[Proposition 1]{Que71} $Q$ is a subring of a von Neumann regular ring $S$ such that the inclusion map $R\rightarrow S$ is a flat epimorphism ($S$ is the maximal flat epimorphic extension of $R$). If either $Q$ or $S$ is semisimple, then $Q=S$.
\end{proof} 

Given a ring $R$, a left $R$-module $M$ and $x\in M$,  the \textit{content ideal} $\mathrm{c}(x)$ of $x$ in $M$, is the intersection of all right ideals $A$ for which $x\in AM$. We say that $M$ is a \textit{content module} if $x\in\mathrm{c}(x)M,\ \forall x\in M$. We say that $M$ is {\it FP-injective} if $\mathrm{Ext}_R^1(F,M)=0$ for each finitely presented left $R$-module $F$. It is easy to see that each FP-injective module is semi-injective, but we do not know if the converse holds, except for some classes of rings.

\begin{proposition} \label{P:Perfect}
Let $R$ be a self left FP-injective ring. Then each  flat
left $R$-module is finitely projective if and only if  $R$ is left perfect.
\end{proposition}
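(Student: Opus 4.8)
The plan is to prove the two directions separately, using the characterization of left perfect rings by Bass (every flat left module is projective, equivalently $R$ satisfies the descending chain condition on principal right ideals, equivalently $R/J(R)$ is semisimple and $J(R)$ is right $T$-nilpotent). For the ``if'' direction, suppose $R$ is left perfect. Then every flat left $R$-module is projective, and every projective module is obviously finitely projective (any homomorphism from a finitely generated module into a direct summand of a free module factors through that free module). So each flat left $R$-module is finitely projective with no use of the FP-injectivity hypothesis; this direction is immediate.

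For the ``only if'' direction, assume $R$ is self left FP-injective and that each flat left $R$-module is finitely projective. Since $R$ is FP-injective as a left module, it is in particular semi-injective, hence by Remark~\ref{R:f-injective} a flat left $R$-module is semi-compact (equivalently injective) exactly when it is semi-injective; but more to the point, I would first aim to show that $R$ satisfies the ascending chain condition on left annulets, i.e.\ $R$ is $\Sigma$-semi-compact as a left module. To do this I would take a flat left module $M$ and argue that ``finitely projective'' forces each flat module to be semi-compact: given a compatible finitely solvable system $r_jx=m_j$ in $M$, one builds (as in Lemma~\ref{L:upper}) a single pure extension $B$ of $M$ in which the system is solvable, uses FP-injectivity of $R$ to control the finitely presented pieces, and uses finite projectivity of $B$ (or of a suitable flat overmodule) to push the solution back into $M$. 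Once each flat left $R$-module is semi-compact, Theorem~\ref{T:flat is semicompact} gives that $R$ is $\Sigma$-semi-compact as a left module, i.e.\ satisfies a.c.c.\ on left annulets; then Corollary~\ref{C:QF}-type reasoning (Bj\"ork's theorem, since $R$ is self left semi-injective) would already yield quasi-Frobenius, which is stronger than left perfect. The subtlety is that the statement claims only left perfect and not quasi-Frobenius, so the implication ``finitely projective $\Rightarrow$ semi-compact'' for flat modules must in fact fail in general — meaning this route is too strong and must be wrong.

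Instead, the correct approach I would take is to use finite projectivity directly together with FP-injectivity of $R$ to get descending chains on principal right ideals. Concretely: $R^{\mathbb{N}}$ (or a suitable large flat module, e.g.\ the flat cover of a module witnessing a bad chain) is flat, hence finitely projective by hypothesis. Given a descending chain $a_1R\supseteq a_1a_2R\supseteq a_1a_2a_3R\supseteq\cdots$ of principal right ideals, one constructs a flat left module and a homomorphism from a finitely generated (indeed cyclic) left module whose factoring through a free module forces the chain to stabilize; FP-injectivity of ${}_RR$ is exactly what is needed to produce the relevant cyclic map and to identify $\mathrm{Ext}^1$ of the finitely presented cokernels as zero, converting ``finitely projective'' into a chain condition. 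The main obstacle — and the step I expect to require the most care — is organizing this so that only left perfectness (a.c.c.\ on the relevant chains / $T$-nilpotence of $J(R)$) comes out, and not the full $\Sigma$-semi-compactness; this is where one must exploit that FP-injectivity gives semi-injectivity of ${}_RR$ but not injectivity, keeping the argument from collapsing to the quasi-Frobenius case. Once left perfectness of $R$ is extracted, the proof is complete.
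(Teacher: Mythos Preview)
Your ``if'' direction is fine: left perfect $\Rightarrow$ every flat is projective $\Rightarrow$ finitely projective, with no hypothesis needed on $R$.

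The ``only if'' direction, however, does not go through as written. You correctly diagnose that the first route (flat finitely projective $\Rightarrow$ semi-compact $\Rightarrow$ $R$ left $\Sigma$-semi-compact $\Rightarrow$ quasi-Frobenius via Bj\"ork) overshoots, and indeed the paper later gives an explicit self FP-injective perfect ring that is not quasi-Frobenius, so that route is genuinely blocked. But your replacement route is only a sketch of intentions: you assert that from finite projectivity of some large flat module one can ``construct a homomorphism from a cyclic module whose factoring through a free module forces the chain to stabilize,'' yet you give no indication of which flat module, which cyclic map, or how the factorization yields stabilisation of $a_1R\supseteq a_1a_2R\supseteq\cdots$. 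Nothing in the surrounding machinery of the paper (Lemma~\ref{L:upper}, Theorem~\ref{T:character}, Proposition~\ref{P:(N_0,1)-flat}) supplies this step, and FP-injectivity of $_RR$ by itself does not obviously convert finite projectivity into a right $T$-nilpotence statement. As it stands this is a plan with the crucial lemma missing.

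The paper's argument is entirely different and much shorter. It does not attempt to extract a chain condition directly. Instead it invokes two results from \cite{Coucho07}: over a self left FP-injective ring, every finitely projective left module is FP-injective and a \emph{content module} (\cite[Proposition 3(2)]{Coucho07}); and a ring over which every flat left module is FP-injective and content is left perfect (\cite[Theorem 2]{Coucho07}). So the hypothesis ``each flat is finitely projective'' is upgraded, via self FP-injectivity of $R$, to ``each flat is a content module,'' and it is this content-module characterisation of left perfectness that closes the argument. The role of FP-injectivity is thus not to produce annihilator chain conditions but to transfer finite projectivity to the content property; this is the idea your proposal is missing.
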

\begin{proof}
Let $M$ be a flat left $R$-module. Since it is finitely projective, so it is FP-injective and a content module by \cite[Proposition 3(2)]{Coucho07}. We conclude that $R$ is left perfect by \cite[Theorem 2]{Coucho07}.
\end{proof}

\begin{corollary} \label{C:Flat=f-proj} Let $R$ be a ring. Assume that $R$ has a right flat epimorphic extension $S$ which is self left FP-injective. Then each flat left $R$-module is finitely projective if and only if $S$ is left perfect.
\end{corollary}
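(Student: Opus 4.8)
The plan is to deduce the statement from Proposition~\ref{P:Perfect} applied to the ring $S$, transporting the property ``every flat module is finitely projective'' along the right flat ring epimorphism $\phi\colon R\hookrightarrow S$ in both directions. This parallels the way the ``finitely projective'' part of Theorem~\ref{T:Flat=f-pro} is handled, with ``left perfect'' replacing ``semisimple''.

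I would first prove the implication ``$\Rightarrow$''. Assume every flat left $R$-module is finitely projective, and let $M$ be a flat left $S$-module. Restricting scalars along $\phi$ makes $M$ a flat left $R$-module (as in the proof of Theorem~\ref{T:Flat=f-pro}), hence finitely projective over $R$. Since $\phi$ is a ring epimorphism, the multiplication map $S\otimes_RM\to M$ is an isomorphism of left $S$-modules, so Proposition~\ref{P:locali}(1) gives that $M$ is finitely projective over $S$. Thus every flat left $S$-module is finitely projective over $S$, and Proposition~\ref{P:Perfect} (applicable because $S$ is self left FP-injective) yields that $S$ is left perfect.

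For the converse ``$\Leftarrow$'', assume $S$ is left perfect. Then every flat left $S$-module is projective --- a fortiori finitely projective --- over $S$, and it remains to push this down: to show that every flat left $R$-module $M$ is finitely projective over $R$. This is exactly the step performed in Theorem~\ref{T:Flat=f-pro} for a semisimple $S$; it rests on \cite[Corollary 7]{She91}, a consequence of the terminating-chain criterion \cite[Theorem 5]{She91}, which applies equally well to a left perfect right flat epimorphic extension. Concretely, for a flat left $R$-module $M$ the left $S$-module $S\otimes_RM$ is flat, hence projective over $S$ and --- by Proposition~\ref{P:locali}(2), using that $\phi$ is injective --- finitely projective over $R$; one then has to descend finite projectivity from $S\otimes_RM$ to its $R$-submodule $M$. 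This descent is the main obstacle: $M$ embeds into $S\otimes_RM$ but need not be a direct summand of it, so finite projectivity of $M$ is not visible by an elementary summand argument, and it is exactly here that the perfectness of $S$ --- equivalently, the finiteness it imposes on finitely generated right ideals and their left annihilators --- is genuinely used, via \cite[Theorem 5]{She91}/\cite[Corollary 7]{She91}.
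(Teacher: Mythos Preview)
Your proof is correct and follows essentially the same route as the paper's. For $(\Rightarrow)$ you argue exactly as the paper does (restriction of scalars, then Proposition~\ref{P:locali}(1), then Proposition~\ref{P:Perfect}); for $(\Leftarrow)$ the paper simply invokes \cite[Corollary~7]{She91} directly, whereas you add a paragraph explaining why a naive descent via Proposition~\ref{P:locali}(2) alone does not suffice---useful commentary, but not a different argument.
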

\begin{proof}
If $S$ is left perfect we conclude by \cite[Corollary 7]{She91}. Conversely, first we show that each flat left $S$-module is finitely projective. The proof is similar to that of $(4)\Rightarrow (1)$ of Theorem~\ref{T:Flat=f-pro}, and then we use the previous proposition.
\end{proof}

By \cite[Corollary 16]{ZiZi78} (a result due to Jensen) each semiprimary ring with square of the Jacobson radical zero is $\Sigma$-pure-injective (hence $\Sigma$-semi-compact) on either side. Since these rings are left and right perfect then each flat module is projective.

\begin{remark}\label{R:cond}
\textnormal{Consider the following two conditions:
\begin{enumerate}
\item each flat left $R$-module is semi-compact;
\item each flat left $R$-module is finitely projective.
\end{enumerate}
Let us observe that there are many examples of rings satisfying the two conditions. We shall see that they are not equivalent. Does the first condition imply the second?}
\end{remark}

Let $R$ be a ring which is a FP-injective  left module. Then $R$ is left perfect if and only if $R$ satisfies the second condition by Proposition~\ref{P:Perfect}. By Corollary~\ref{C:QF}, $R$ satisfies the first condition if and only if $R$ is quasi-Frobenius. It remains to give an example of a left perfect ring which is self left FP-injective and which is not quasi-Frobenius.

\begin{proposition}\label{P:perf}
Let $R$ be a local commutative ring of maximal ideal $P$ such that $P^2$ is the only  minimal non-zero ideal of $R$. Then:
\begin{enumerate}
\item $R$ is perfect and self FP-injective;
\item $R$ is quasi-Frobenius if and only if $P$ is finitely generated if and only if $R^{\mathbb{N}}$ is $(1,1)$-flat.
\end{enumerate}
\end{proposition}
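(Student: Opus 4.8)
The plan is to first read off the module structure that the hypothesis imposes on $R$, then treat the two assertions of (1), and finally close a three-term cycle of implications for (2).

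\emph{Structure.} As a minimal nonzero ideal, $P^2$ is a simple $R$-module, so $P^2\cong R/P$ and in particular $P^2$ is principal, say $P^2=Rt$. Hence $P^3=P\cdot P^2\subseteq P^2$; since $P^3=P^2$ would give $P^2=0$ by Nakayama's lemma (applicable because $P^2$ is finitely generated), we get $P^3\subsetneq P^2$, and minimality of $P^2$ forces $P^3=0$. Since the simple submodules of $R$ are exactly the minimal nonzero ideals, it follows that $\mathrm{Soc}(R)=P^2$; in particular, for $x\in P\setminus P^2$ the ideal $Px$ is a nonzero submodule of the simple module $P^2$, so $Px=P^2$, and $P^2$ is essential in $R$. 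Finally, because $P^3=0$ the multiplication induces a symmetric $k$-bilinear form $P/P^2\times P/P^2\to P^2$, where $k:=R/P$, and the equality $\mathrm{Soc}(R)=P^2$ says precisely that this form is non-degenerate.

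\emph{Part (1).} That $R$ is perfect is immediate: $R$ is local, hence semiperfect, and $P$ is nilpotent, hence $T$-nilpotent, so Bass's theorem applies. For self FP-injectivity I use the standard fact that a commutative ring is FP-injective over itself iff every compatible finite system of $R$-linear equations $\sum_{i=1}^n r_{ij}x_i=a_j$ for $1\le j\le m$ (with $r_{ij},a_j\in R$) has a solution in $R$, equivalently iff every homomorphism into $R$ from a finitely generated submodule of $R^n$ extends to $R^n$. Given such a system, if some $r_{ij}$ is a unit one solves that equation for the corresponding unknown and substitutes, getting a compatible system with one fewer unknown; iterating reduces to the case $r_{ij}\in P$ for all $i,j$. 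Then every left-hand side lies in $P$, and compatibility — applied to relations supported in a single coordinate, with value $1$ or with values in $P$ — forces the constant of every zero column to vanish and the constant of every column whose entries all lie in $P^2$ to lie in $P^2$. Using $P^3=0$, the equality $Px=P^2$ for $x\notin P^2$, and the non-degeneracy of the pairing above, the remaining problem reduces to a compatible, hence solvable, linear system over the field $k$ on the two layers $P/P^2$ and $P^2$. Making this reduction precise is the main technical point of the proposition.

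\emph{Part (2).} Assume (1). I will prove $(P\text{ finitely generated})\Rightarrow(R\text{ quasi-Frobenius})\Rightarrow(R^{\mathbb N}\text{ is }(1,1)\text{-flat})\Rightarrow(P\text{ finitely generated})$. If $P$ is finitely generated, then each of $R/P$, $P/P^2$, $P^2$ is finite-dimensional over $k$, so $R$ is Noetherian; being local with nilpotent maximal ideal it is Artinian, and over a Noetherian ring FP-injectivity coincides with injectivity, so $R$ is self-injective Artinian, i.e.\ quasi-Frobenius. A quasi-Frobenius ring is Noetherian, hence coherent, so $R^{\mathbb N}$ is flat, in particular $(1,1)$-flat. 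For the last implication I argue contrapositively: if $P$ is not finitely generated then $\dim_k(P/P^2)=\infty$; pick $a\in P\setminus P^2$. Then $P^2\subseteq\mathrm{Ann}(a)$, and the induced map $P/P^2\to P^2$, $\bar x\mapsto xa$, is onto with kernel $\mathrm{Ann}(a)/P^2$, so the latter has codimension one in $P/P^2$ and is infinite-dimensional. Choosing $g_1,g_2,\dots\in\mathrm{Ann}(a)$ whose images in $P/P^2$ are $k$-linearly independent, and noting that a finitely generated subideal of $\mathrm{Ann}(a)$ spans only a finite-dimensional subspace of $P/P^2$, the sequence $(g_i)_{i\in\mathbb N}$ lies in $\mathrm{Ann}(a)^{\mathbb N}$ but not in $\mathrm{Ann}(a)R^{\mathbb N}$; hence the canonical map $R^{\mathbb N}\otimes_R Ra\to R^{\mathbb N}$ has nonzero kernel and $R^{\mathbb N}$ is not $(1,1)$-flat.

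\emph{Main obstacle.} Apart from the self FP-injectivity in part (1), the argument is bookkeeping with the filtration $0\subset P^2\subset P\subset R$ and standard results (Bass's theorem, the coincidence of FP-injectivity and injectivity over a Noetherian ring, Chase's theorem on products of flat modules, and the annihilator description of $(1,1)$-flatness). The genuinely delicate part is to show that the non-degeneracy of the pairing $P/P^2\times P/P^2\to P^2$ is exactly what is needed to solve every compatible finite linear system over $R$.
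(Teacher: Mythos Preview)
Your structural analysis and Part~(2) are correct and complete. In fact your cycle of implications for Part~(2) is more elementary and self-contained than the paper's argument, which declares the first equivalence obvious and deduces the second from Corollaries~\ref{C:QF} and~\ref{C:singly-semi-compact} together with \cite[Theorem~4.11]{Cou11}. Your explicit sequence $(g_i)\in\mathrm{Ann}(a)^{\mathbb N}\setminus\mathrm{Ann}(a)R^{\mathbb N}$ is a clean direct witness to the failure of $(1,1)$-flatness.

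The substantive issue is the self FP-injectivity in Part~(1), where you yourself flag the gap. Your plan---reduce to coefficients in $P$ and then solve in two layers via the pairing $P/P^2\times P/P^2\to P^2$---can be carried out, but it is more intricate than your sketch suggests. After reducing modulo $P^2$ one must show that $(\bar a_j)_j$ lies in the $k$-row span of $(\bar r_{ij})$; this uses that for any finite-dimensional $W\subseteq P/P^2$ the pairing induces a \emph{surjection} $P/P^2\to W^*$ (a consequence of non-degeneracy), so that a putative separating functional can be realized by an element of $P$ and fed back into compatibility. Then the residual system in $P^2$ must be solved, which again requires the same surjectivity applied to the column space, together with a further perturbation argument to upgrade relations modulo $P^2$ to exact relations in $R$. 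This works but is a page of bookkeeping, not a one-liner.

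The paper bypasses all of this by a different mechanism: it invokes Jain's criterion \cite[Theorem~2.3]{Ja73} that $R$ is self FP-injective iff the evaluation map $\phi_M:M\to M^{**}$ is injective for every finitely presented $M$, and verifies this by a length count. Writing $0\to K\to F\to M\to 0$ with $F$ free of finite rank and $K\subseteq PF$, one sees that $K$ has finite length (since $PK\subseteq P^2F$ is semisimple of finite length and $K/PK$ is finitely generated over $k$); since $E(R)=E(R/P)$ one has $K^*\subseteq\mathrm{Hom}_R(K,E)$ and hence $\ell(K^{**})\le\ell(K^*)\le\ell(K)$ by Matlis duality. As $\phi_K$ is already injective this forces $\phi_K$ to be an isomorphism, whence $u^{**}$ is injective and the snake lemma gives $\phi_M$ injective. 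This avoids explicit system-solving entirely and is what you should supply in place of your sketch.
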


\begin{proof}
$(1)$. Since $R$ is local and $P$ nilpotent ($P^3=0$), $R$ is perfect. For each $R$-module $M$ we put $M^*=\mathrm{Hom}_R(M,R)$. By \cite[Theorem 2.3]{Ja73} to show that $R$ is self FP-injective it is enough to prove that the evaluation map $\phi_M:M\rightarrow M^{**}$ is injective for each finitely presented $R$-module $M$. We consider a finitely presented module $M$. We have the following exact sequence $0\rightarrow K\xrightarrow{u} F\xrightarrow{\pi} M\rightarrow 0$ where $F$ is a free $R$-module of finite rank, $K$ a finitely generated submodule of $F$ and $u$ the inclusion map. We may assume that $K\subseteq PF$. We have the following commutative diagram with exact horizontal sequences:

\[\begin{CD}
 K @>u>> F @>\pi>>M  \\
 @V{\phi_K}VV  @V{\phi_F}VV @V{\phi_M}VV \\
  K^{**} @>u^{**}>>  F^{**} @>\pi^{**}>>  M^{**}
\end{CD}\]

\bigskip

Since $\phi_F$ is an isomorphism and $u$ a monomorphism then $\phi_K$ is injective. On the other hand, if $E=\mathrm{E}(R/P)$ then $E\cong\mathrm{E}(R)$. If $N$ is a module of finite length, denoted by $\ell(N)$, then $\ell(N)=\ell(\mathrm{Hom}_R(N,E))$ (this can be proved by induction on $\ell(N)$). We have $PK\subseteq P^2F$. So, $PK$ is a semisimple module of finite length. Since so is $K/PK$, it follows that $K$ is of finite length too. From $K^*\subseteq\mathrm{Hom}_R(K,E)$ we deduce that $\ell(K^*)\leq \ell(K)$. In the same way we get that $\ell(K^{**})\leq\ell(K)$. Whence $\ell(K^{**})=\ell(K)$, $\phi_K$ is an isomorphism and $u^{**}$ is injective. From snake Lemma we deduce that $\phi_M$ is a monomorphism.

$(2)$. The first equivalence is obvious and the second is a consequence of Corollaries \ref{C:QF} and \ref{C:singly-semi-compact}, and \cite[Theorem 4.11]{Cou11}.
\end{proof}

\begin{example}
Let $K$ be a field, $\Lambda$ an index set and $\alpha\in\Lambda$. Let $R$ be the factor ring of the polynomial ring $K[X_{\lambda}\mid \lambda\in\Lambda]$ modulo the ideal generated by \[\{X_{\lambda}^2-X_{\alpha}^2\mid \lambda\in\Lambda\}\cup\{X_{\lambda}X_{\mu}\mid \lambda,\mu\in\Lambda, \lambda\ne\mu\}.\] Then $R$ satisfies the assumptions of Proposition~\ref{P:perf}. Consequently, if $\Lambda$ is not a finite set then $R$ verifies the second condition of Remark~\ref{R:cond} but not the first.
\end{example}

\section{Semi-compactness and pure-injectivity}\label{S:pure-injectivity}

By Example~\ref{E:exe}(1) each pure-injective module is semi-compact. By Theorem~\ref{T:Von} the converse holds over every von Neumann regular ring. From Corollary~\ref{C:Noetherian} we deduce the following:
\begin{corollary}
\label{C:NoPss} Let $R$ be a left Noetherian ring. Then each semi-compact left $R$-module is pure-injective if and only if $R$ is left pure-semisimple.
\end{corollary}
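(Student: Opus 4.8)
The plan is to prove the two implications separately, leaning on Corollary~\ref{C:Noetherian} to eliminate the class of flat/pure-projective modules as a source of counterexamples. The forward direction is immediate: if $R$ is left pure-semisimple, then \emph{every} left $R$-module is pure-injective, so in particular every semi-compact module is pure-injective, and no Noetherian hypothesis is even needed here.

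For the converse, assume $R$ is left Noetherian and that every semi-compact left $R$-module is pure-injective; I want to conclude that every left $R$-module is pure-injective, i.e. that $R$ is left pure-semisimple. The key observation is that over a left Noetherian ring, by Corollary~\ref{C:Noetherian}(2), \emph{every} left $R$-module is semi-compact. Hence, by the standing hypothesis, every left $R$-module is pure-injective, which is exactly the definition of $R$ being left pure-semisimple. So the proof is essentially a one-line deduction: chain together Corollary~\ref{C:Noetherian} (Noetherian $\Rightarrow$ all modules semi-compact) with the hypothesis (semi-compact $\Rightarrow$ pure-injective).

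The only thing to double-check is that the definitions line up: "left pure-semisimple" was defined in Section~\ref{S:semicomp} precisely as the condition that each left $R$-module is pure-injective, so the target statement is literally "every left $R$-module is pure-injective," and that is what the two-step chain delivers. I do not anticipate any real obstacle; the substance of the corollary is entirely carried by Corollary~\ref{C:Noetherian}, and the statement is really just recording the observation that, in the presence of the Noetherian hypothesis, semi-compactness is automatic and hence imposes no restriction. If one wanted to be thorough, one could also note the mild remark already made before Theorem~\ref{T:Division} — that a left Noetherian ring is pure-semisimple iff every semi-compact module is pure-injective — of which this corollary is simply a restatement with the Noetherian hypothesis made explicit.
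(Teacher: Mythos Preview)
Your proposal is correct and matches the paper's approach exactly: the paper presents this corollary as an immediate consequence of Corollary~\ref{C:Noetherian} with no further proof, and your argument spells out precisely that deduction (Noetherian $\Rightarrow$ all modules semi-compact, so the hypothesis forces all modules to be pure-injective).
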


Now, we investigate rings for which each semi-compact left module is pure-injective. We shall give a partial answer.

Recall that a left $R$-module $M$ is {\it cotorsion} if $\mathrm{Ext}_R^1(F,M)=0$ for each flat left $R$-module $F$. It is easy to check that every pure-injective module is cotorsion, and, by \cite[Proposition 3.3.1]{Wu96} a ring $R$ is left perfect if and only if each left $R$-module is cotorsion. So, if $R$ is left Artinian, then each left $R$-module is semi-compact and cotorsion, but each left module is pure-injective if and only $R$ is left pure-semisimple.

The following theorem completes Theorem~\ref{T:Von}.

\begin{theorem}
\label{T:Von1} For any ring $R$ the following conditions are equivalent:
\begin{enumerate}
\item $R$ is von Neumann regular;
\item each cotorsion left (right) $R$-module is injective.
\end{enumerate}
\end{theorem}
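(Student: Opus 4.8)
The plan is to deduce Theorem~\ref{T:Von1} from Theorem~\ref{T:Von} by exploiting the inclusions of module classes $\{\text{injective}\}\subseteq\{\text{pure-injective}\}\subseteq\{\text{cotorsion}\}$, so that condition (2) is formally the strongest of the three ``every such module is injective'' statements but in fact collapses onto the one already treated there.

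First I would prove $(1)\Rightarrow(2)$. Recall that a ring is von Neumann regular if and only if every left $R$-module (equivalently, every right $R$-module) is flat. Hence, when $R$ is von Neumann regular, a left $R$-module $M$ is cotorsion exactly when $\mathrm{Ext}_R^1(F,M)=0$ for \emph{every} left $R$-module $F$; by Baer's criterion (restricting to $F=R/I$, $I$ a left ideal) this is precisely injectivity of $M$. The same argument on the other side gives the statement for right modules.

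Then for $(2)\Rightarrow(1)$ I would use the observation, recorded just before the theorem, that every pure-injective module is cotorsion. Consequently condition (2), specialized to pure-injective modules, asserts that every pure-injective left $R$-module (resp. right $R$-module) is injective, and Theorem~\ref{T:Von} now yields that $R$ is von Neumann regular. Either the left or the right form of (2) suffices for this, which is consistent with the left-right symmetry of von Neumann regularity and explains the parenthetical ``(right)'' in the statement.

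There is no serious obstacle here: once one notices that pure-injective $\Rightarrow$ cotorsion and recalls the flatness characterization of von Neumann regular rings, both implications are immediate, and the substantive content of the theorem is entirely carried by Theorem~\ref{T:Von}. The only point requiring a token of care is checking that ``cotorsion'' unwinds to ``injective'' over a von Neumann regular ring, for which Baer's criterion is enough.
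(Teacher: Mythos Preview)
Your proof is correct, but the argument for $(2)\Rightarrow(1)$ differs from the paper's. You reduce to Theorem~\ref{T:Von} via the inclusion $\{\text{pure-injective}\}\subseteq\{\text{cotorsion}\}$, which is clean and stays entirely inside the paper's earlier results. The paper instead argues directly that every left $R$-module is flat: given $M$, it invokes the Bican--El~Bashir--Enochs flat cover theorem (together with \cite[Lemma~2.1.1]{Wu96}) to produce a short exact sequence $0\to K\to F\to M\to 0$ with $F$ flat and $K$ cotorsion; hypothesis (2) makes $K$ injective, the sequence splits, and $M$ is flat. Your route is more elementary in that it avoids the flat-cover machinery, while the paper's route is self-contained at this point and does not lean on the earlier Theorem~\ref{T:Von}. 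For $(1)\Rightarrow(2)$ the two arguments coincide; the appeal to Baer's criterion is harmless but unnecessary, since $\mathrm{Ext}_R^1(F,M)=0$ for all $F$ already characterizes injectivity.
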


\begin{proof}
$(1)\Rightarrow (2)$. Since each left $R$-module is flat, so each cotorsion left module is injective.

$(2)\Rightarrow (1)$. Let $M$ be a left $R$-module. We shall prove that $M$ is flat. By \cite[Theorem 3]{BiEBEn01} and \cite[Lemma 2.1.1]{Wu96}, there exists an exact sequence \[0\rightarrow K\rightarrow F\rightarrow M\rightarrow 0,\] where $F$ is flat and $K$ cotorsion. Since $K$ is injective, the sequence splits and we deduce that $M$ is flat. 
\end{proof}

\begin{proposition}\label{P:sc=pi}
Let $R$ be a commutative ring. Then: 
\begin{enumerate}
\item if each semi-compact $R$-module is cotorsion (repectively pure-injective) then,
for each multiplicative subset $S$ of $R$, each semi-compact $S^{-1}R$-module is cotorsion (respectively pure-injective).
\item if each semi-compact $R$-module is pure-injective then each prime ideal of $R$ is maximal.
\end{enumerate}
\end{proposition}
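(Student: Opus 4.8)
The plan is to prove both parts by base change along the canonical ring homomorphisms $R\to S^{-1}R$ and $R\to R/P$, using two elementary observations. The first is that \emph{semi-compactness descends}: if $\varphi\colon R\to T$ is a ring homomorphism and $M$ is a $T$-module which is semi-compact over $T$, then $M$ is semi-compact over $R$. To see this I would invoke the equational criterion of Proposition~\ref{P:first character}(3): a finitely solvable (compatible) system $r_jx=m_j$ ($r_j\in R$, $m_j\in M$) over $R$ is, after applying $\varphi$, the system $\varphi(r_j)x=m_j$ over $T$; a finite subsystem solvable over $R$ is solvable over $T$ by the same element, so the $T$-system is finitely solvable, and it is automatically compatible over $T$ (if $m$ solves a finite subsystem and $\sum_j\sigma_j\varphi(r_j)=0$ in $T$, then $\sum_j\sigma_jm_j=\bigl(\sum_j\sigma_j\varphi(r_j)\bigr)m=0$); hence it has a solution in $M$, which solves the original $R$-system. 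The second observation is that when $\varphi$ is a ring epimorphism --- both our maps are, being a localization and a surjection --- every $R$-linear map between $T$-modules is automatically $T$-linear.

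For part~(1), let $M$ be semi-compact over $S^{-1}R$. By the first observation $M$ is semi-compact over $R$, hence cotorsion (respectively, pure-injective) over $R$ by hypothesis. A flat $S^{-1}R$-module is flat over $R$ (transitivity of flatness, since $S^{-1}R$ is $R$-flat), so any short exact sequence of $S^{-1}R$-modules with flat quotient is such a sequence over $R$; cotorsionness of $M$ over $R$ splits it over $R$, and by the second observation the $R$-linear splitting is $S^{-1}R$-linear, so $M$ is cotorsion over $S^{-1}R$. For the pure-injective case I would first note that a pure-exact sequence of $S^{-1}R$-modules is pure-exact over $R$: for any $R$-module $N$ and $S^{-1}R$-module $A$ there is a natural isomorphism $N\otimes_RA\cong S^{-1}N\otimes_{S^{-1}R}A$, so $N\otimes_R(-)$ is exact on the sequence because $S^{-1}N\otimes_{S^{-1}R}(-)$ is. Then, given a pure embedding $A\hookrightarrow B$ of $S^{-1}R$-modules and an $S^{-1}R$-linear map $A\to M$, the embedding is pure over $R$, so by pure-injectivity of $M$ over $R$ the map extends to an $R$-linear map $B\to M$, which is $S^{-1}R$-linear by the second observation; thus $M$ is pure-injective over $S^{-1}R$.

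For part~(2), suppose for contradiction that $P$ is a prime ideal which is not maximal, and set $D=R/P$; then $D$ is a domain that is not a field. The module $D^{(\mathbb{N})}$ is torsion-free over $D$, hence semi-compact over $D$ by Example~\ref{E:exe}(ii), hence semi-compact over $R$ by the first observation. By hypothesis $D^{(\mathbb{N})}$ is pure-injective over $R$; since pure-exact sequences of $D$-modules are pure-exact over $R$ and $R$-linear maps between $D$-modules are $D$-linear, $D^{(\mathbb{N})}$ is pure-injective over $D$, hence algebraically compact: every finitely solvable compatible system of linear equations with parameters in it has a solution in it. I would then contradict this by fixing a nonzero non-unit $d\in D$ and the system $x_n-dx_{n+1}=e_n$ ($n\geq 0$), where $(e_n)_{n\geq0}$ is the standard basis of $D^{(\mathbb{N})}$. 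It is finitely solvable by truncation and back-substitution, and compatible because the linear relations among the left-hand sides force $s_0=0$ and $s_n=ds_{n-1}$, hence all scalars vanish; but a global solution $(x_n)$ would make the coordinate of $x_0$ in position $k+1$ (for $k$ above the finite support of $x_0$) equal to $d^{k+1}(1+ds)$ for some $s\in D$, whence $d^{k+1}(1+ds)=0$, and since $D$ is a domain and $d\neq 0$ this forces $d$ to be a unit --- a contradiction. Hence every prime ideal of $R$ is maximal.

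The main obstacle is bookkeeping rather than a single deep step: one must verify carefully that semi-compactness, flatness of modules, and purity of exact sequences of $T$-modules are insensitive to whether one works over $T$ or over $R$ when $R\to T$ is a (flat) ring epimorphism, and that this permits transporting cotorsionness and pure-injectivity back and forth. Once that framework is in place, part~(1) is immediate, and part~(2) reduces to the small computation exhibiting a torsion-free --- hence semi-compact --- module over a non-field domain that fails to be algebraically compact.
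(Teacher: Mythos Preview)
Your proof is correct. Part~(1) is precisely the paper's argument, only spelled out in full where the paper writes ``we easily check''. For part~(2) you take a different route: the paper simply observes that the same base-change reasoning (applied to the surjection $R\to R/L$ rather than to a localization) shows that every semi-compact $R/L$-module is pure-injective, and then invokes Theorem~\ref{T:Division} to conclude that the domain $R/L$ is a field. Theorem~\ref{T:Division} in turn argues that every flat $R/L$-module is projective (the kernel of any free presentation is torsion-free, hence semi-compact, hence pure-injective, so the presentation splits), whence $R/L$ is perfect and therefore a division ring. You instead bypass Theorem~\ref{T:Division} entirely and produce an explicit witness: the torsion-free module $D^{(\mathbb{N})}$ is semi-compact but, via the classical system $x_n-dx_{n+1}=e_n$, not algebraically compact. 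Your argument is more self-contained and elementary (no appeal to the theory of perfect rings), while the paper's is shorter and leverages a result already established in the text; both rest on the same descent/ascent framework you set up at the start.
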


\begin{proof}
$(1)$. 
Let $M$ be a semi-compact $S^{-1}R$-module. Then $M$ is semi-compact over $R$ too. It follows that $M$ is cotorsion (respectively pure-injective) as $R$-module. We easily check that it satisfies this property as $S^{-1}R$-module.

$(2)$. We apply Theorem~\ref{T:Division} to $R/L$ where $L$ is a prime ideal. 
\end{proof}

\begin{theorem}
\label{T:reduced} Let $R$ be a commutative reduced ring. Then the following conditions are equivalent:
\begin{enumerate}
\item $R$ is von Neumann regular;
\item each semi-compact $R$-module is pure-injective.
 \end{enumerate}
\end{theorem}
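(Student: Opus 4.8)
The plan is to prove the two implications separately, with essentially all the content in $(2)\Rightarrow(1)$. For $(1)\Rightarrow(2)$ I would argue in one line: if $R$ is von Neumann regular then by Theorem~\ref{T:Von} every semi-compact $R$-module is injective, and every injective module is pure-injective.

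For $(2)\Rightarrow(1)$ the strategy is to reduce to a statement about Krull dimension and then exploit reducedness. First I apply Proposition~\ref{P:sc=pi}(2): the hypothesis that every semi-compact $R$-module is pure-injective forces every prime ideal of $R$ to be maximal, so $R$ has Krull dimension zero. Now combine this with ``$R$ reduced''. For each maximal ideal $P$, the ring $R_P$ is a localization of a reduced ring, hence reduced, and its prime spectrum is $\{PR_P\}$, since the primes of $R_P$ correspond to the primes of $R$ contained in $P$ and there is only one (all primes being maximal). Thus $PR_P$ is the nilradical of $R_P$, which is $0$ because $R_P$ is reduced, so $R_P$ is a field. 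Finally I invoke the standard local criterion for von Neumann regularity: given $a\in R$, the ideal $(a^2):(a)$ localizes at each maximal ideal $P$ to $a^2R_P:aR_P$, which equals $R_P$ because in a field $a$ is either $0$ or a unit; hence $(a^2):(a)=R$, i.e. $a\in a^2R$ for every $a\in R$, which is exactly von Neumann regularity.

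A slightly more self-contained route localizes first: by Proposition~\ref{P:sc=pi}(1) every semi-compact $R_P$-module is pure-injective for each maximal ideal $P$, and $R_P$ is a reduced local ring; applying Proposition~\ref{P:sc=pi}(2) to $R_P$ shows $R_P$ has a unique prime, hence is a field, and one concludes as before. Either way there is no real obstacle: once Proposition~\ref{P:sc=pi} is in hand, the argument is just the elementary passage ``reduced $+$ Krull dimension $0$ $\Rightarrow$ von Neumann regular''. The only point deserving a moment's care is that reducedness is genuinely needed — dropping it makes the implication false, e.g. for $R=k[X]/(X^2)$, which is of finite representation type, hence pure-semisimple, so all its semi-compact modules are pure-injective, yet $R$ is not von Neumann regular.
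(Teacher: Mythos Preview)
Your proof is correct and follows essentially the same approach as the paper: for $(1)\Rightarrow(2)$ you invoke Theorem~\ref{T:Von}, and for $(2)\Rightarrow(1)$ you use Proposition~\ref{P:sc=pi}(2) to get Krull dimension zero, deduce that each $R_P$ is a field, and conclude von Neumann regularity. The paper's proof is terser but identical in structure; your extra details (why $R_P$ is a field, the local criterion for regularity, and the $k[X]/(X^2)$ counterexample showing reducedness is needed) are correct and helpful elaborations.
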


\begin{proof}
It easy to prove that $(1)\Rightarrow (2)$.

$(2)\Rightarrow (1)$. By Proposition~\ref{P:sc=pi}(2), each prime ideal is maximal. It follows that $R_P$ is a field for each maximal ideal $P$ of $R$. Hence $R$ is von Neumann regular.
\end{proof}

\begin{proposition}\label{P:nonidemp}
Let $R$ be a commutative local ring of maximal ideal $P$. Assume that $P\ne P^2$. Then $R$ is pure-semisimple if each semi-compact $R$-module is pure-injective.
\end{proposition}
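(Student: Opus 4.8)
The plan is to exploit the hypothesis $P\neq P^2$ to manufacture, over $R$, a situation forcing every module to be pure-injective, essentially by reducing to the Noetherian case handled in Corollary~\ref{C:NoPss}. First I would pick an element $t\in P\setminus P^2$; by Nakayama the ideal $Rt$ is not contained in $P^2$, and I want to use $t$ to see that $R$ behaves, for the purposes at hand, like a ring over which $R/Rt$ (or a suitable quotient) is small. The key reduction is this: if each semi-compact $R$-module is pure-injective, then by Proposition~\ref{P:sc=pi}(1) the same property is inherited by localizations, and I would like to pass to a quotient or localization that is Noetherian (in fact Artinian), where Corollary~\ref{C:NoPss} tells us that the property forces pure-semisimplicity, and then lift this back to $R$.

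Concretely, the steps I would carry out are: (1) Show that $R$ is a perfect ring. This should follow because the hypothesis ``each semi-compact module is pure-injective'' is strong: every flat module is semi-compact over a $\Sigma$-semi-compact ring, and here one should first establish that $R$ is $\Sigma$-semi-compact as a left module, i.e.\ that $R$ satisfies the a.c.c.\ on annulets (Corollary~\ref{C:sigma semicompact ring}), using $P\neq P^2$ to rule out an infinite strictly ascending chain of annulets. Then each flat module is semi-compact by Theorem~\ref{T:flat is semicompact}, hence pure-injective by hypothesis, hence cotorsion; by \cite[Proposition 3.3.1]{Wu96} a ring all of whose modules are cotorsion is perfect, but here I only get flat modules cotorsion — for a local ring that still forces $R$ perfect since a local perfect ring is exactly one whose maximal ideal is T-nilpotent, and flat-equals-projective is equivalent to flat modules being cotorsion combined with the flat cover argument. (2) Once $R$ is perfect and local with $P\neq P^2$, conclude that $R$ is actually Artinian: a commutative perfect local ring with $P\neq P^2$ and d.c.c.\ on annulets is Noetherian by the classical argument (the associated graded ring is finitely generated in degree one, $P/P^2$ is finite-dimensional over $R/P$ because $R$ is semi-compact, and T-nilpotence of $P$ gives finite length). (3) Apply Corollary~\ref{C:NoPss}: a left Noetherian (here Artinian) ring over which each semi-compact module is pure-injective is left pure-semisimple.

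The main obstacle I anticipate is step (2) — getting from ``perfect local with $P\neq P^2$'' together with the semi-compactness/annulet condition to genuine Noetherianness. The delicate point is controlling $P/P^2$: I expect one must argue that semi-compactness of $R$ forces $P/P^2$ to be finite-dimensional over $R/P$ (otherwise one builds an infinite descending chain of annihilator subgroups, contradicting the d.c.c.\ part of Theorem~\ref{T:sigma semicompact}), and then T-nilpotence of $P$ together with finite generation of each $P^n/P^{n+1}$ yields that $R$ has finite length, hence is Artinian; from there Corollary~\ref{C:NoPss} finishes the proof. A secondary subtlety is making sure the passage ``flat modules cotorsion $\Rightarrow$ $R$ perfect'' is legitimate in this commutative local setting; if it is not available directly, I would instead use Theorem~\ref{T:Division} applied to $R/L$ for $L$ a prime (as in Proposition~\ref{P:sc=pi}(2)) to get that $R$ is $0$-dimensional, combine with $R$ semi-compact to get Artinian, and then invoke Corollary~\ref{C:NoPss}.
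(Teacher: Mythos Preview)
Your proposal has a fundamental gap: neither of your routes to ``$R$ is Artinian'' can work from the given hypotheses. In the main route, step~(1) asserts that $P\ne P^2$ forces the a.c.c.\ on annulets, but no mechanism is given and the implication is false. Take $R=k\ltimes V$, the trivial extension of a field $k$ by an infinite-dimensional $k$-space $V$ with $V\cdot V=0$: here $P=V$, $P^2=0\ne P$, and the only annulets of $R$ are $0$, $P$, $R$, so $R$ is $\Sigma$-semi-compact; yet $R$ is not Artinian and $\dim_{R/P}P/P^2=\infty$. This same ring also kills your step~(2) claim that $\Sigma$-semi-compactness of $R$ bounds $\dim_{R/P}P/P^2$, and your fallback (``$0$-dimensional $+$ $R$ semi-compact $\Rightarrow$ Artinian''). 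The underlying problem is that, apart from invoking Proposition~\ref{P:sc=pi}(2), your argument only tests the hypothesis on the module $R$; but the hypothesis is a statement about \emph{all} semi-compact modules, and any successful proof must exploit that.

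The paper's argument sidesteps all of this by \emph{not} proving $R$ itself is Artinian. After using Proposition~\ref{P:sc=pi}(2) to get that $P$ is the only prime, it argues by contradiction on $\dim_{R/P}P/P^2$: if this dimension exceeds $1$, one passes to a factor ring $R'=R/I$ (kill $P^2$ together with all but finitely many members of a basis of $P/P^2$) so that $R'$ is local Artinian with $1<\dim_{R'/P'}P'/{P'}^{2}<\infty$. The hypothesis ``semi-compact $\Rightarrow$ pure-injective'' transfers to factor rings, and since $R'$ is Noetherian, Corollary~\ref{C:Noetherian} makes every $R'$-module semi-compact, hence pure-injective; thus $R'$ is pure-semisimple. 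But a commutative local Artinian ring is pure-semisimple only if its maximal ideal is principal, contradicting $\dim P'/{P'}^{2}>1$. The key idea you are missing is precisely this passage to a factor where Artinianness is automatic by construction.
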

\begin{proof}
By Proposition~\ref{P:sc=pi}, $P$ is the only prime ideal of $R$. So, it suffices to show that $\dim_{R/P}P/P^2=1$. By way of contradiction suppose that $\dim_{R/P}P/P^2>1$. After replacing $R$ by a suitable factor, we may assume that $1<\dim_{R/P}P/P^2<\infty$. So, $R$ is Artinian but not pure-semisimple. Then each $R$-module is semi-compact (and cotorsion) but there exists a module which is not pure-injective, whence a contradiction.
\end{proof}

\begin{proposition}\label{P:Jacregular}
Let $R$ be ring and $J$ its Jacobson radical. Assume that $R/J$ is von Neumann regular and $J$ nilpotent. Then each semi-compact left (right) $R$-module is cotorsion.
\end{proposition}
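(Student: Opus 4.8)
The plan is to reduce everything to the case $J=0$, where $R$ is von Neumann regular and Theorem~\ref{T:Von} applies, by filtering a semi-compact module along the powers of $J$. Fix $n$ with $J^n=0$ and let $M$ be a semi-compact left $R$-module. I would consider the chain
\[0=M[J^0]\subseteq M[J^1]\subseteq\cdots\subseteq M[J^n]=M,\]
whose end terms are $M[R]=0$ and $M[J^n]=M[0]=M$. Each $J^k$ is a two-sided ideal, so Proposition~\ref{P:submodule} makes every $M[J^k]$ semi-compact. Moreover, by Lemma~\ref{L:M[I][J]} one has $(M[J^k])[J^{k-1}]=M[J^k+J^{k-1}]=M[J^{k-1}]$ (because $J^k\subseteq J^{k-1}$), so applying Proposition~\ref{P:submodule} once more, now to the semi-compact module $M[J^k]$ and the ideal $J^{k-1}$, shows that each factor $Q_k:=M[J^k]/M[J^{k-1}]$ is semi-compact. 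Finally $JM[J^k]\subseteq M[J^{k-1}]$ (if $J^k m=0$ and $j\in J$ then $J^{k-1}(jm)\subseteq J^k m=0$), so $J$ annihilates $Q_k$ and $Q_k$ is naturally an $R/J$-module.

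Next I would observe that an $R/J$-module $P$ that is semi-compact over $R$ is also semi-compact over $R/J$: every left ideal of $R/J$ is of the form $I/J$ with $J\subseteq I$, and $P[I/J]=P[I]$, so a finitely solvable set of congruences over $R/J$ is literally such a set over $R$ and therefore has a simultaneous solution. Applying this to each $Q_k$ and invoking Theorem~\ref{T:Von} (since $R/J$ is von Neumann regular), each $Q_k$ is injective as an $R/J$-module.

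The crux is then to show that an injective $R/J$-module $P$ is cotorsion over $R$. Given a flat left $R$-module $F$ and a short exact sequence $0\to P\to E\to F\to 0$ representing a class in $\mathrm{Ext}^1_R(F,P)$, tensoring with $R/J$ remains exact at $P$ because $\mathrm{Tor}_1^R(F,R/J)=0$, and since $JP=0$ we obtain an exact sequence of $R/J$-modules $0\to P\to E/JE\to F/JF\to 0$. Being $R/J$-injective, $P$ splits off: a retraction $E/JE\to P$ composed with the projection $E\to E/JE$ is an $R$-linear retraction $E\to P$, so the original sequence splits and $\mathrm{Ext}^1_R(F,P)=0$. (This short argument is the only genuinely non-formal step.)

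To conclude, recall that the class of cotorsion modules is closed under extensions, which is immediate from the long exact sequence for $\mathrm{Ext}^1_R(F,-)$ with $F$ flat. An induction on $k$ along the filtration above then finishes the proof: $M[J^0]=0$ is cotorsion, and if $M[J^{k-1}]$ is cotorsion then $M[J^k]$ is an extension of the cotorsion module $Q_k$ by the cotorsion module $M[J^{k-1}]$, hence cotorsion; at $k=n$ we get that $M=M[J^n]$ is cotorsion. The right-module statement follows by the symmetric argument, the hypotheses on $R$ being left--right symmetric.
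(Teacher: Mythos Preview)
Your argument is correct and follows the same overall strategy as the paper: filter $M$ by the annihilators $M[J^k]$, use Proposition~\ref{P:submodule} to see that the relevant pieces are semi-compact, reduce the subquotients to modules over a von Neumann regular ring via Theorem~\ref{T:Von}, and conclude by closure of cotorsion modules under extensions.

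The differences are structural rather than substantive. The paper argues by induction on the nilpotency index $n$: for $J^{p+1}=0$ it uses the single short exact sequence $0\to M[J^p]\to M\to M/M[J^p]\to 0$, observes that both ends are semi-compact $R/J^p$-modules, invokes the inductive hypothesis for the ring $R/J^p$, and transports cotorsion back to $R$ via \cite[Proposition~3.3.3]{Wu96}. You instead unfold the full $n$-step filtration at once and work only with $R/J$-modules on the subquotients; this avoids the auxiliary rings $R/J^p$ and the induction on $n$. Your direct proof that an injective $R/J$-module is cotorsion over $R$ (tensor the extension with $R/J$, use $\mathrm{Tor}^R_1(R/J,F)=0$ for $F$ flat, split over $R/J$, lift the retraction through $E\to E/JE$) is a self-contained replacement for the citation to Xu's book; it is the same fact, proved from scratch. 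Either organization is fine; yours is slightly more explicit, the paper's slightly more compact.
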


\begin{proof}
If $J=0$, then we apply Theorem~\ref{T:Von}. Let $n$ be the smallest integer satisfying $J^n=0$. We proceed by induction on $n$. Let $M$ be a semi-compact module. We consider the following exact sequence:
\[0\rightarrow M[J^p]\rightarrow M[J^{p+1}]\rightarrow M[J^{p+1}]/M[J^p]\rightarrow 0.\]
We assume that the theorem holds if $n=p$.
For any proper two-sided ideal $A$, a left $R/A$-module is cotorsion as $R$-module if so is as $R/A$-module (see \cite[Proposition 3.3.3]{Wu96}). On the other hand $M[J^p]=M[J^{p+1}][J^p]$. By using Proposition~\ref{P:submodule} and the fact that the class of cortorsion modules is closed by extension, we get that $M[J^{p+1}]$ is cortorsion, because $M[J^p]$ and $M[J^{p+1}]/M[J^p]$ are left modules over $R/J^p$ and they are semi-compact by Proposition~\ref{P:submodule}.  
\end{proof}

\begin{corollary}
\label{C:nilpotent} Let $R$ be a commutative ring and $N$ its nilradical. Then:
\begin{enumerate}
\item  $R_P$ is perfect for each maximal ideal $P$ if $N$ is T-nilpotent and if every semi-compact $R$-module is cotorsion.
\item  $R_P$ is pure-semisimple for each maximal ideal $P$ if $N$ is T-nilpotent and if every semi-compact $R$-module is pure-injective.
\item Every semi-compact $R$-module is cotorsion if $N$ is nilpotent and if each prime ideal is maximal.
\end{enumerate}
\end{corollary}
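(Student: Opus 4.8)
The plan is to treat the three statements separately; (3) is a direct corollary of Proposition~\ref{P:Jacregular}, (2) is a localization argument, and (1) follows the same skeleton but runs into a genuine difficulty. For (3): if every prime of $R$ is maximal then every maximal ideal, containing a minimal prime, equals it, so the maximal ideals are exactly the minimal primes; hence the Jacobson radical $J$ coincides with the nilradical $N$. Then $R/J=R/N$ is reduced and zero-dimensional, therefore von Neumann regular, and $J=N$ is nilpotent by hypothesis, so Proposition~\ref{P:Jacregular} applies and yields that every semi-compact $R$-module is cotorsion.

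For (2): fix a maximal ideal $P$. By the pure-injective clause of Proposition~\ref{P:sc=pi}(1) every semi-compact $R_P$-module is pure-injective, and by Proposition~\ref{P:sc=pi}(2) every prime of $R$ --- hence every prime of $R_P$ --- is maximal, so $PR_P$ is the unique prime of $R_P$ and thus equals the nilradical $N_P=NR_P$ of $R_P$. T-nilpotency descends to localizations (a product $(x_1/s_1)\cdots(x_n/s_n)$ with the $x_i\in N$ vanishes once $x_1\cdots x_n=0$), so $PR_P$ is T-nilpotent and $R_P$ is perfect, being local with T-nilpotent maximal ideal. To upgrade this to pure-semisimplicity I split into cases. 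If $PR_P\ne(PR_P)^2$, then $R_P$ is pure-semisimple by Proposition~\ref{P:nonidemp}, its semi-compact modules being pure-injective. If $PR_P=(PR_P)^2$, then applying the standard fact that a left T-nilpotent ideal $I$ satisfies $IM=M\Rightarrow M=0$ for every module $M$, with $M=PR_P$, gives $PR_P=0$, so $R_P$ is a field and hence pure-semisimple. In either case (2) holds.

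For (1): again fix $P$. By the cotorsion clause of Proposition~\ref{P:sc=pi}(1) every semi-compact $R_P$-module is cotorsion, and $N_P$ is T-nilpotent as above. Exactly as in (2), it suffices to prove that $R$ has no non-maximal prime: then $PR_P=N_P$ is T-nilpotent for every maximal $P$, so each $R_P$ is perfect. To show a prime $L$ is maximal, I would run the argument of Theorem~\ref{T:Division} with cotorsion in place of pure-injectivity. Over the domain $R/L$, every torsion-free module is semi-compact by Example~\ref{E:exe}(ii); if such modules are cotorsion over $R/L$, then for any flat $R/L$-module $M$ a free presentation $0\to K\to F\to M\to 0$ splits, since $K$ is torsion-free hence cotorsion and $M$ is flat, so $\mathrm{Ext}^1_{R/L}(M,K)=0$; hence $R/L$ is perfect, and a perfect domain is a field (the descending chain condition on principal ideals forces every nonzero element to be invertible).

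The main obstacle is precisely this last step: transferring cotorsion-ness from $R$ down to the quotient $R/L$. A semi-compact $R/L$-module is semi-compact over $R$, hence cotorsion over $R$; but for localizations one has more --- a flat $S^{-1}R$-module is already $R$-flat because $S^{-1}R$ is $R$-flat, and $\mathrm{Ext}^1_{S^{-1}R}$ agrees with $\mathrm{Ext}^1_R$ on $S^{-1}R$-modules --- whereas the surjection $R\to R/L$ is not flat, so "cotorsion over $R$" does not obviously descend to "cotorsion over $R/L$". I expect the resolution to consist of routing the argument through a localization in which the relevant quotient becomes itself a localization of $R$ (this occurs at a minimal prime $\mathfrak q$ of $R$, where $\mathrm{Frac}(R/\mathfrak q)=R_{\mathfrak q}$ is $R$-flat) and then invoking the T-nilpotency of $N$ to pin down the local rings $R_P$. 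This is where the real work lies.
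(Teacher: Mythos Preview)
Your treatment of parts (2) and (3) is correct and essentially matches the paper. For (3) the paper does exactly what you do: observe that $N$ is the Jacobson radical, $R/N$ is von Neumann regular, and invoke Proposition~\ref{P:Jacregular}. For (2) the paper proceeds as you do (localize, get $R_P$ perfect via T-nilpotence of $PR_P$, then apply Proposition~\ref{P:nonidemp}); your explicit case split on whether $PR_P=(PR_P)^2$ is actually cleaner than the paper's bald assertion ``so $PR_P\ne(PR_P)^2$'', which tacitly ignores the trivial case $PR_P=0$.

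For part (1) you have put your finger on a genuine wrinkle in the paper. The paper simply writes ``By Proposition~\ref{P:sc=pi}(2) each prime ideal is maximal'', but Proposition~\ref{P:sc=pi}(2) is stated only under the hypothesis that every semi-compact module is \emph{pure-injective}, not cotorsion. So the citation is, as written, inapplicable, and you are right that some justification is needed. However, the obstacle you describe---that cotorsion-ness does not obviously descend along $R\to R/L$---is in fact not an obstacle at all: Xu's \cite[Proposition 3.3.3]{Wu96} (the very result the paper cites in the proof of Proposition~\ref{P:Jacregular}) asserts that for any ideal $I$, an $R/I$-module is cotorsion over $R/I$ \emph{if and only if} it is cotorsion over $R$. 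The paper only quotes one direction there, but both hold. With this in hand, a semi-compact $R/L$-module is semi-compact over $R$, hence cotorsion over $R$, hence cotorsion over $R/L$; your cotorsion version of Theorem~\ref{T:Division} then applies to $R/L$ verbatim and forces $R/L$ to be a field. So the paper's approach is sound once one supplies the full strength of Xu's result, and your proposed detour through localizations at minimal primes is unnecessary.
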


\begin{proof}
(1). By Proposition~\ref{P:sc=pi}(2) each prime ideal is maximal. So, for each maximal ideal $P$ the Jacobson of $R_P$ is T-nilpotent, whence $R_P$ is perfect.

(2). As in (1) we prove that $R_P$ is perfect for each maximal ideal $P$. So, $PR_P\ne (PR_P)^2$. We use Proposition~\ref{P:nonidemp} to conclude.

(3). It is easy to see that $N$ is the Jacobson radical of $R$ and that $R/N$ is von Neumann regular. We conclude by Proposition~\ref{P:Jacregular}.
\end{proof}



\end{document}